\newtheorem{definition}{Definition}
\newtheorem{theorem}{Theorem}
\newtheorem{lemma}{Lemma}
\newtheorem{remark}{Remark}
\newtheorem{fact}{Fact}
\newtheorem{assumption}{Assumption}
\newcommand{\E}{\mathbb{E}}
\newcommand{\tens}[1]{\bm{\mathcal{#1}}}
\newcommand{\mat}[1]{\bm{#1}}
\def\tA{{\tens{A}}}  
\def\tB{{\tens{B}}}  
\def\tE{{\tens{E}}}
\def\tH{{\tens{H}}}
\def\tI{{\tens{I}}}
\def\tM{{\tens{M}}}
\def\tR{{\tens{R}}}
\def\tS{{\tens{S}}}
\def\tT{{\tens{T}}}
\def\tU{{\tens{U}}}
\def\tV{{\tens{V}}}
\def\tW{{\tens{W}}}
\def\tX{{\tens{X}}}  
\def\tY{{\tens{Y}}}
\def\tZ{{\tens{Z}}}
\def\vzero{{\bm{0}}}
\def\vb{{\bm{b}}}
\def\vh{{\bm{h}}}
\def\vu{{\bm{u}}}
\def\vv{{\bm{v}}}
\def\vx{{\bm{x}}}
\def\vz{{\bm{z}}}
\def\R{{\mathbb{R}}} 
\def\E{{\mathbb{E}}} 
\def\diag{{\math{diag}}}
\def\bcirc{{\mathrm{bcirc}}}
\def\unfold{{\mathrm{unfold}}}
\def\fold{{\mathrm{fold}}}
\def\bcirc{{\mathrm{bcirc}}}
\begin{document}

\title{Randomized Kaczmarz methods for t-product tensor linear systems with factorized operators}
\author{Alejandra Castillo, Jamie Haddock, Iryna Hartsock, Paulina Hoyos, Lara Kassab, Alona Kryshchenko, Kamila Larripa, Deanna Needell, Shambhavi Suryanarayanan, Karamatou Yacoubou-Djima}

\begin{abstract}
    Randomized iterative algorithms, such as the randomized Kaczmarz method, have gained considerable popularity due to their efficacy in solving matrix-vector and matrix-matrix regression problems. Our present work leverages the insights gained from studying such algorithms to develop regression methods for tensors, which are the natural setting for many application problems, e.g., image deblurring. In particular, we extend the randomized Kaczmarz method to solve a tensor system of the form $\tA\tX = \tB$, where $\tX$ can be factorized as $\tX = \tU\tV$, and all products are calculated using the t-product. We develop variants of the randomized factorized Kaczmarz method for matrices that approximately solve tensor systems in both the consistent and inconsistent regimes. We provide theoretical guarantees of the exponential convergence rate of our algorithms, accompanied by illustrative numerical simulations. Furthermore, we situate our method within a broader context by linking our novel approaches to earlier randomized Kaczmarz methods.
\end{abstract}
\maketitle
\section{Introduction}

Solving large-scale systems of linear equations or performing linear regressions is one of the most frequently encountered problems across the data-rich sciences. These problems arise in machine learning as subroutines of several optimization methods~\cite{boyd2004convex}, in medical imaging~\cite{Gordon1970,herman1993algebraic}, in sensor networks~\cite{savvides2001dynamic}, and in statistical analysis, to name only a few. In the matrix-vector and matrix-matrix regimes, these problems are well-understood; there exist highly efficient iterative methods with provable guarantees in the literature, e.g., Kaczmarz methods.
Recently, Kaczmarz-type methods have been proposed for a variety of linear systems and regression problems defined over \emph{tensors}~\cite{ma2022randomized,chen2021regularized,tang2023sketch}; these problems have the form
\vspace{-0.1em}
\begin{equation}
    \tA \tX = \tB,
\end{equation}
where $\tA \in \mathbb{R}^{p \times t \times q}$ is the measurement operator or dictionary, $\tB \in \mathbb{R}^{p \times s \times q}$ represents the measurements or data, $\tX \in \mathfrak{X} \subset \mathbb{R}^{s \times t \times q}$ is the signal of interest, and $\tA \tX$ is the t-product between $\tA$ and $\tX$ (see Definition~\ref{def:t-product}).

In some imaging applications (e.g., deblurring with known blurring operator), the measurement operator tensor $\tA$, is given in a factored or factorizable form~\cite{kilmer2013third}. For instance, in settings where multiple known blurring operators have sequentially degraded an image, the deblurring problem can be represented as a regression defined by an operator tensor $\tA = \tU \tV$ where $\tU$ and $\tV$ are the sequential deblurring operators. We consider the problem of solving a tensor linear system defined with a tensor operator given in factorized form, $\tA = \tU \tV$, using randomized Kaczmarz (RK) methods.
 In particular, given the system in the form
\begin{align}
    \tU  \tV  \tX = \tY \label{eq: full system}
\end{align}
where $\tU \in \R^{m \times m_1 \times p}, \tV \in \R^{m_1 \times n \times p}$, $\tX \in \R^{n \times l \times p}$ and $\tY \in \R^{m \times l \times p},$ we iteratively produce approximate solutions to the systems
\begin{align}
     \tU \tZ &= \tY, \text{ and }\label{eq:outer_system}\\
     \tV \tX &= \tZ,\label{eq:inner_system}
\end{align}
using variants of the tensor randomized Kaczmarz method~\cite{ma2022randomized}.

\subsection{Notation and Definitions}
We use regular lower-case Latin and Greek letters (e.g., $a$ and $\alpha$) to denote scalars, boldfaced lower-case Latin letters (e.g., $\vx$) to denote vectors, boldfaced upper-case Latin letters (e.g., $\mat A$) to denote matrices, and boldfaced upper-case calligraphic Latin letters (e.g., $\tA$) to denote higher-order tensors.
We use ``MATLAB" notation, e.g., $\mat A_{i :}$ is the $i$th row of the matrix $\mat A$ and $\tA_{: j :}$ is the $j$th column-slice of the tensor $\tA$. In addition, we let $\tA_k$ denote the $k$th frontal slice of $\tA$, i.e., $\tA_k := \tA_{::k}$.

Throughout this document, $\|\vv\|$ denotes the Euclidean norm of a vector $\vv$ and $\|\tA\|_F$ denotes the Frobenius norm of a tensor $\tA$.
Given a matrix $\mat A$, we denote its smallest  singular value by $\sigma_{\min}{(\mat A)}$ and its smallest \textit{positive} singular value by $\sigma_{\min +}(\mat A)$.

Note that \begin{align*}
    \langle \tA, \tB \rangle = \sum_{i,j,k} \tA_{ijk}\tB_{ijk}
\end{align*}
defines an inner product on spaces of real-valued tensors whose dimensions agree.  This inner product induces the Frobenious norm of a tensor $\tA$, i.e., $\langle \tA, \tA \rangle = \|\tA\|_F^2$.

We now provide background on the tensor-tensor t-product \cite{kilmer2011factorization}.
\begin{definition}
\label{def:t-product}
The \emph{tensor-tensor t-product} between $\tA \in \R^{m \times n \times p}$ and $\tB \in \R^{n \times l \times p}$ is defined as
\begin{equation*}
\tA \tB = \fold (\bcirc(\tA) \unfold (\tB)) \in \R^{m \times l \times p},
\end{equation*}
where $\bcirc (\tA)$ denotes the block-circulant matrix
\[\bcirc(\tA) = \begin{pmatrix}
\tA_{1} & \tA_{p} & \tA_{p-1} & \dots & \tA_{2} \\
\tA_{2} & \tA_{1} & \tA_{p} & \dots & \tA_{3} \\
\vdots & \vdots & \vdots & \dots & \vdots \\
\tA_{p} & \tA_{p-1} & \tA_{p-2} & \dots & \tA_{1} \\
\end{pmatrix} \in \R^{mp \times np},\]
and $\unfold(\tB)$ denotes the operation defined as
\[\unfold(\tB) = \begin{pmatrix}
 \tB_{1}\\
 \tB_{2}\\
 \vdots \\
 \tB_{p}
\end{pmatrix} \in \R^{np \times l}.\]
and $\fold(\unfold (\tB))) = \tB$.
\end{definition}

We use $\tA^*$ to denote the conjugate transpose of the tensor $\tA \in \mathbb{R}^{m \times n \times p}$, which is obtained by taking the conjugate transpose of each of the frontal slices $\tA_k$ and then reversing the order of the transposed frontal slices from 2 through $p$. The identity tensor $\tI \in \R^{m\times m \times p}$ is the tensor whose first frontal slice is the $m\times m$ identity matrix, and all other slices are the $m \times m$ zero matrix.

Finally, we note that the definition of the t-product implies that a tensor linear system may be reformulated as an equivalent (matrix) linear system.
\begin{fact}
   The tensor linear system $$\tA \tX = \tB$$ is equivalent to the matrix-matrix linear system $$\bcirc(\tA) \unfold(\tX) = \unfold(\tB);$$ that is, solutions to the tensor linear system, $\tX$, after unfolding, $\unfold(\tX)$, are solutions to the matrix linear system, and vice versa. \label{fact:equivalent_systems}
\end{fact}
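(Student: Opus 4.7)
The plan is to verify this equivalence by directly unwinding the definition of the t-product and using the fact that $\fold$ and $\unfold$ are mutual inverses. Since the statement is a bi-implication about solutions, I will handle each direction separately, though both directions reduce to a one-line manipulation.

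First I would start from the tensor system $\tA \tX = \tB$. By Definition~\ref{def:t-product}, the left-hand side expands as $\tA \tX = \fold(\bcirc(\tA)\,\unfold(\tX))$, so the tensor equation is $\fold(\bcirc(\tA)\,\unfold(\tX)) = \tB$. Applying $\unfold$ to both sides and using $\unfold \circ \fold = \mathrm{id}$ on the left (this is the content of the identity $\fold(\unfold(\tB)) = \tB$ in the definition, together with the fact that $\unfold$ inverts $\fold$ on the image) yields
\begin{equation*}
\bcirc(\tA)\,\unfold(\tX) = \unfold(\tB),
\end{equation*}
which is the matrix linear system. Thus any $\tX$ solving the tensor system produces an $\unfold(\tX)$ solving the matrix system.

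For the converse direction, I would start from an arbitrary solution $\mat M \in \R^{np \times l}$ of $\bcirc(\tA)\,\mat M = \unfold(\tB)$, set $\tX := \fold(\mat M)$ (so that $\unfold(\tX) = \mat M$), and apply $\fold$ to both sides. This gives $\fold(\bcirc(\tA)\,\unfold(\tX)) = \fold(\unfold(\tB)) = \tB$, and the left-hand side is exactly $\tA \tX$ by Definition~\ref{def:t-product}. Hence $\tA \tX = \tB$.

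There is no real obstacle here; the only subtle point worth stating explicitly is that $\unfold$ is a bijection between $\R^{n \times l \times p}$ and $\R^{np \times l}$ with inverse $\fold$, so the correspondence $\tX \leftrightarrow \unfold(\tX)$ is one-to-one and onto. This ensures that the two solution sets are not merely related by a surjection but are in genuine bijective correspondence, completing the proof of equivalence.
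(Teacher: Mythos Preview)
Your proposal is correct and is essentially the only natural argument: unwind Definition~\ref{def:t-product} and use that $\fold$ and $\unfold$ are mutual inverses. The paper itself does not give a proof of this fact; it simply states it as an immediate consequence of the definition of the t-product, so your write-up is already more detailed than what appears in the paper.
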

We will exploit this fact to compare iterative methods for tensor linear systems to their counterpart iterative methods for matrix linear systems.

\begin{definition} Given a tensor $\tM$, we define the tensor $\mathcal{P}_\tM = \tM^* (\tM\tM^*)^{-1} \tM$.
\end{definition}

This is a projection operator, that is, $\mathcal{P}_\tM^2= \mathcal{P}_\tM$. It is an orthogonal projection onto the range of $\tM$.\\

We let $[m]$ denote the set $\{1, 2, \hdots, m\}$ and  $2^S$ denote the power set of a given set $S$.
\begin{definition}
\label{def:block_sampling_definitions}
  Given a tensor $\tA \in \R^{m \times n \times p}$, we let $T_{\tA} \subset 2^{[m]}$ be the index sets of the allowable (i.e., able to be selected) blocks of rows of $\tA$ and let $d_{T_{\tA}}$ be the size of the largest set in $T_{\tA}$.  We let
  $c_{\max}(T_{\tA}) = \text{argmax}_{i \in [m]} |\{\tau \in T_{\tA} : i \in \tau\}|$ denote the maximum number of blocks in which any one row slice appears. Let $\mathcal{D}(T_{\tA})$ be the sampling probability distribution over the set $T_{\tA}$ and let $prob(\tau)$ denote the probability with which a block is sampled.  We assume that all $\mu \in T_{\tA}$ are sets with no repeated elements; that is, each block contains any one row at most once, even if rows can appear in multiple blocks.
\end{definition}

\begin{assumption}
    We assume throughout that if $(\tM_{\gamma ::} \tM_{\gamma ::}^*)^{-1}$ is referred to, then $\tM_{\gamma ::} \tM_{\gamma ::}^*$ is invertible.  In particular, we require that the block row subsets of $\tU \in \R^{m \times m_1 \times p} $ and $\tV \in \R^{m_1 \times n \times p}$ all satisfy this required property, that is, the tensors $\tU_{\mu::}\tU_{\mu::}^*$ and $\tV_{\nu::}\tV_{\nu::}^*$ are invertible for all $\mu \in T_{\tU}$ and $\nu \in T_{\tV}$; and that the column subsets of $\tU$ all satisfy this required property, that is, $\tU_{:j:}^*\tU_{:j:}$ is invertible for all $j \in [m_1].$\label{assump:invertibility}
\end{assumption}
We note that for Assumption~\ref{assump:invertibility} to hold, we must have that the sampled blocks of $\tU$ and $\tV$ are wide tensors; that is, that $|\mu| \le m_1$ and $|\nu| \le n$.

\subsection{Related Work}

The Kaczmarz method is an iterative algorithm for solving linear systems of equations. It consists of iteratively updating an approximation by choosing one equation in the system and projecting the current approximation onto the hyperplane defined by that equation~\cite{Kaczmarz1937Angen}. More precisely, let $\mathbf{A}$ be an $m \times n$ matrix, $\mathbf{x}$ be a vector of unknowns, and $\mathbf{b}$ be a vector of constants. An update in the Kaczmarz algorithm is defined by
\begin{equation}\label{eqn:basicKaczmarz}
    \mathbf{x}^{(t)}
    = \mathbf{x}^{(t - 1)} + \frac{\mathbf{b}_{i_t} - \mathbf{A}_{i_t:}\mathbf{x}^{(t-1)}}{\Vert \mathbf{A}_{i_t:}\Vert^2}\mathbf{A}_{i_t:}^T,
\end{equation}
where $\mathbf{A}_{i_t:}$ is the selected $i_t$th row of $\mathbf{A}$ and $b_{i_t}$ is the selected $i_t$th entry of $\mathbf{b}$ at iteration $t$.

The Kaczmarz method is a simple yet highly efficient method, particularly for large, sparse systems of equations characterized by matrices where most entries are zero. It was later rediscovered for use in computerized tomography as the Algebraic Reconstruction Technique~\cite{herman1993algebraic}.
Additionally, this method has generated a plethora of extensions. The randomized Kaczmarz (RK) method, introduced in~\cite{strohmer2009randomized}, was the first variant shown to yield exponential convergence in expectation when selecting the rows at random according to a specified distribution, instead of selecting the rows cyclically as done in classical Kaczmarz. In this variant, the $i$th row of $\mathbf{A}$ is selected in the $t$th equation with probability $\Vert \mathbf{A}_i \Vert_2^2/\Vert \mathbf{A} \Vert_F^2$.
Other extensions include popular iterative algorithms \cite{Ma2015convergence, NSWjournal, Pop01:Fast-Kaczmarz-Kovarik, Pop04:Kaczmarz-Kovarik-Algorithm, frek}, acceleration and parallelization strategies~\cite{EN11:Acceleration-Randomized, liu2014asynchronous, morshed2019accelerated, moorman2020randomized}, and techniques for reducing noise and corruption~\cite{zouzias2013randomized, haddock2019randomized}.

\subsubsection{Extended Methods}
For inconsistent systems, that is, systems with no exact solution, the RK method does not converge to the ordinary least-squares solution
\[\mat{x_{LS}} = \underset{\mat{x} \in \R^n }{\text{argmin}}
 \|\mat{Ax} - \mat{b}\|_2^2, \]
as demonstrated in~\cite{needell2010randomized}. To overcome this issue, the Randomized Extended Kaczmarz (REK) method includes a random projection to iteratively reduce the component of $\mat{b}$ that is orthogonal to the range of $\mat{A}$. The updates in the REK algorithm are
\begin{equation}\label{eqn:ExtendedKaczmarz}
    \mathbf{x}^{(t)}
    = \mathbf{x}^{(t - 1)} + \frac{\mathbf{b}_{i_t} - \mat{z}_{i_t}^{(t-1)}-\mathbf{A}_{i_t:}\mathbf{x}^{(t-1)}}{\Vert \mathbf{A}_{i_t:}\Vert^2}\mathbf{A}_{i_t:}^T, \quad \mat{z}^{(t)} = \mat{z}^{(t-1)} - \frac{\langle \mat{A}_{: j_t}, z^{(t)} \rangle}{\|\mat{A}_{: j_k}\|^2}\mat{A}_{: j_t}.
\end{equation}
The iterate $\mat{z}^{(t)}$, initialized as $\mat{z}^{(0)} = \mat{y}$, approximates the component
of $\mat{y}$ that is orthogonal to the range of $\mat{A}$, allowing $\mat{x}^{(t)}$
to converge to the least-squares solution of the system $\mat{x_{LS}}$. In \cite{zouzias2013randomized}, the authors prove that the REK method
converges linearly in expectation to the solution $\mat{x_{LS}}$. Related works include an average block variant of the REK method \cite{Du2020averageblockREK}, and techniques robust to sparse noise and corruption~\cite{haddock2018randomized,haddock2019randomized,quantHNRS20,steinerberger2021quantile}.

\subsubsection{Block Methods}
In~\cite{needell2013paved, popa2012kaczmarz}, the authors present a block variant of the Kaczmarz method to solve the system $\mat{A}\vx = \vb$. In \cite{needell2013paved} the authors let $T_{\mat{A}}$ be a partition $\{\tau_1, \dots, \tau_k\}$ of the row indices $[m]$ of $\mat A$ and select the uniform probability distribution for $\mathcal{D}(T_{\mat A})$. At iteration $t$, the current estimate $\vx^{(t -1)}$ is projected onto the solution space of $\mat{A}_{\tau_t}\vx = \vb_{\tau_t}$ using the pseudo-inverse of the matrix $\mat A_{\tau_t}$:
\[ \vx^{(t)} = \vx^{(t-1)} - \mat{A}_{\tau_t}^\dagger( \mat{A}_{\tau_t}\vx^{(t-1)} - \vb_{\tau_t}). \]
The process continues until it converges. The authors prove the algorithm has an expected linear rate of convergence until it reaches a convergence horizon about the true solution. This result is generalized in \cite{HJY21}  to the case when the least-squares problem is rank-deficient, and it relaxes the requirement that the row blocks be sampled from a matrix paving. Moreover, the authors prove that the convergence horizon depends upon the minimum nonzero singular value of the blocks $\mat A_\tau$ rather than the absolute minimum singular value, which is often 0. Article \cite{zhang2024blockmatrix} presents a block RK method to solve linear feasibility problems without requiring the calculation of the pseudo-inverse of $\mat A_\tau$. It is shown that the method converges linearly in expectation without reliance on any special row paving.

\subsubsection{Methods for Factorized Systems}
In \cite{ma2018iterative}, the authors produce variants of RK to tackle the problem of solving a system of linear equations where the measurement matrix is given as a product of two matrices. Given a matrix $\mat{A} \in \mathbb{R}^{m \times n}$ with factorization $\mat{A} = \mat{U} \mat{V}$, where $\mat{U} \in \mathbb{R}^{m \times r}$ and $\mat{V} \in \mathbb{R}^{r \times n}$, the authors solve the system $\mat{A} \vx = \vb$ in the form
\begin{equation}\label{eqn:matricfactorized}
    \mat{U} \mat{V} \vx = \vb,
\end{equation}
by applying the randomized Kaczmarz method to the individual systems
\begin{equation}
    \mat{U} \vz = \vb, \text{ and } \label{eqn:subsystems1}
\end{equation}
\begin{equation}
    \mat{V} \vx = \vz\label{eqn:subsystems2}.
\end{equation}
The systems \eqref{eqn:subsystems1} and~\eqref{eqn:subsystems2} are solved by interlacing Kaczmarz steps, i.e., finding updates for both systems in each iteration of the algorithm such that the most recent update for $\vz^{(t)}$ is used to compute the next update $\vx^{(t)}$. The authors of~\cite{ma2018iterative} develop two algorithms, RK-RK and REK-RK, that efficiently compute the optimal solution for (\ref{eqn:matricfactorized}) using the factors of $\mat{A}$ in the consistent and inconsistent settings, respectively.

We note that the REK and interlaced RK-RK approaches for factorized systems are quite similar.  In each case, one interlaces RK steps to an updating linear system with updates to the system measurement vector which bring it closer to consistency.  In the REK case, the updates to the system learn the portion of the measurement vector $\vb$ orthogonal to the column-space of $\mat{A}$ and remove this component.  In the factorized system case, the updates to the measurement vector of the inner system $\vz$ are produced by RK steps on the outer system, $\mat{U}\vz = \vb$, which bring the inner system nearer to consistency under the assumptions of~\cite{ma2018iterative}.

\subsubsection{Tensor Methods}

Tensor regression problems arise organically in settings in which the model inputs or outputs are naturally formulated as a multidimensional tensor array, and the tensor product governs the dependence structure between input and output tensors. Examples include weather and climate forecasting, age estimation from medical imaging data or other biomarker information, and many others; see~\cite{liu2022tensor} for more details and an excellent survey of tensor regression models.  In this paper, we will be concerned with tensor regression under the \emph{tensor t-product}.
The closest tensor method related to our work is the Tensor Randomized Kaczmarz (TRK) method proposed in~\cite{ma2022randomized}, a generalization of the RK method for tensor linear systems defined under the t-product. The TRK method finds an approximation to the solution $\tX^*$ of the tensor linear system $\tA \tX = \tB$ by iteratively sampling a row slice of the system defined by $\tA_{i_t::}$ and $\tB_{i_t::}$ and projects the previous iterate onto the space of solutions to this sampled subsystem. The authors prove that this method converges at least linearly in expectation to the unique solution $\tX^*$ of the system. As a result of the equivalence noted in Fact \ref{fact:equivalent_systems}, the TRK method can analogously be viewed as a block Kaczmarz method applied to a matrix-matrix system for a particular choice of blocks. However, as expounded by the authors in \cite{ma2022randomized}, the theoretical guarantees afforded by TRK are stronger than the guarantees from a naive application of the block Kaczmarz method in the equivalent matrix-matrix setting. This gain can be attributed to restriction of the choice of blocks due to the nature of the TRK iterates. This also motivates the advantages of developing methods that utilize the natural structure of the data in such tensor problems.

\begin{algorithm}
\caption{Tensor Randomized Kaczmarz (TRK) \cite{ma2022randomized} }\label{alg:TRK}
	\begin{algorithmic}[1]
		\Procedure{TRK}{$\tA,\tB,\tX^{(0)}, K$}
		\For{$t = 1, \ldots, T$}
		      \State{Sample $i_t \in [m]$.}
                \State{$\tX^{(t)} = \tX^{(t-1)} - \tA_{i_t : :}^*(\tA_{i_t : :} \tA_{i_t : :}^*)^{-1}(\tA_{i_t : :}\tX^{(t-1)} - \tB_{i_t : :})$}
		\EndFor{} \\
		\Return{$\tX^{(T)}$}
		\EndProcedure
	\end{algorithmic}
\end{algorithm}

In \cite{Bao2022RandAvg}, the authors compute the least Frobenius-norm solution for consistent tensor linear systems of the form
        \begin{equation}\label{eq: tensor linear system AX=B}
            \tA \tX = \tB,
        \end{equation}
        where $\tA \in \R^{m \times n \times p}$, $\tX \in \R^{n \times l \times p}$, and $\tB \in \R^{m \times l \times p}$.
The authors propose the tensor randomized average Kaczmarz (TRAK) method, which is pseudoinverse- and inverse-free and offers a speed-up over the TRK method for solving consistent tensor linear systems~\eqref{eq: tensor linear system AX=B}. In~\cite{huang2023tensor}, the authors proposed an extended variant of TRK for the case of inconsistent tensor systems, which they refer to as the tensor randomized extended Kaczmarz (TREK) method.  The authors prove that the iterates converge at least linearly in expectation to the solution of the unperturbed tensor system.

\subsection{Contributions}

In this paper, we propose methods to solve problem \eqref{eq: full system} in both the consistent and inconsistent system regime. Our first method, named FacTBRK obtains an optimal solution for \eqref{eq: full system} when both the outer system and inner systems are consistent, and is a hybrid of TRK~\cite{ma2022randomized}, FacRK~\cite{ma2018iterative}, and block approaches~\cite{needell2013paved, popa2012kaczmarz}.

\begin{algorithm}
	\caption{Factorized Tensor Block Randomized Kaczmarz (FacTBRK)}\label{alg:facTBRK}
	\begin{algorithmic}[1]
		\Procedure{FacTBRK}{$\tU,\tV,\tY,
  T$}
            \State{$\tZ^{(0)} = \vzero$}
            \State{$\tX^{(0)} = \vzero$}
		\For{$t = 1, \ldots, T$}
		      \State{Sample $\mu_t \sim \mathcal{D}(T_{\tU})$.}
                \State{$\tZ^{(t)} = \tZ^{(t-1)} - \tU_{\mu_t : :}^*(\tU_{\mu_t : :} \tU_{\mu_t: :}^*)^{-1}(\tU_{\mu_t::}\tZ^{(t-1)} - \tY_{\mu_t: :})$}
                \State{Sample $\nu_t \sim \mathcal{D}(T_{\tV})$.}
                \State{$\tX^{(t)} = \tX^{(t-1)} - \tV_{\nu_t : :}^*(\tV_{\nu_t : :} \tV_{\nu_t : :}^*)^{-1}(\tV_{\nu_t : :}\tX^{(t-1)} - \tZ_{\nu_t : :}^{(t)})$}
		\EndFor{}
		\Return{$\tX^{(T)}$}
		\EndProcedure
	\end{algorithmic}
\end{algorithm}

The key features of Algorithm (\ref{alg:facTBRK}) are the random sampling of tensor blocks on which each update is projected and the interlaced Kaczmarz steps (5-6) and (7-8). The first part of our main result, Theorem~\ref{theorem:factorized}~\eqref{thm:facTBRK}, illustrates that this method converges at least linearly in expectation on consistent tensor linear systems under some mild assumptions.

The second method, named FacTBREK, tackles the same problem as before, except that the outer system \eqref{eq:outer_system} can now be inconsistent. To develop this method, we use the same tools as for FacTBRK combined with the tensor analogs of the ideas used in the analysis of the randomized extended Kaczmarz method~\cite{zouzias2013randomized}. We prove that each of these methods converges at least linearly in expectation.

\begin{algorithm}
	\caption{Factorized Tensor Block Randomized Extended Kaczmarz (FacTBREK)}\label{alg:facTBREK}
	\begin{algorithmic}[1]
		\Procedure{facTBREK}{$\tU,\tV,\tY,
  T$}
            \State{$\tW^{(0)} = \tY$}
            \State{$\tZ^{(0)} = \vzero$}
            \State{$\tX^{(0)} = \vzero$}
		\For{$t = 1, \ldots, T$}
                \State{Sample $l_t \in [m_1]$}
                \State{$\tW^{(t)} = \tW^{(t-1)} - \tU_{:l_t:}(\tU^\ast_{:l_t :}\tU_{:l_t :})^{-1}(\tU^\ast_{:l_t:}\tW^{(t-1)})$}
		      \State{Sample $\mu_t \sim \mathcal{D}(T_\tU)$.}
                \State{$\tZ^{(t)} = \tZ^{(t-1)} - \tU_{\mu_t : :}^*(\tU_{\mu_t : :} \tU_{\mu_t: :}^*)^{-1}(\tU_{\mu_t: :}\tZ^{(t-1)} - \tY_{\mu_t: :} +\tW^{(t)}_{\mu_t : :})$}
                \State{Sample $\nu_t \sim \mathcal{D}(T_\tV)$.}
                \State{$\tX^{(t)} = \tX^{(t-1)} - \tV_{\nu_t : :}^*(\tV_{\nu_t : :} \tV_{\nu_t : :}^*)^{-1}(\tV_{\nu_t : :}\tX^{(t-1)} - \tZ_{\nu_t : :}^{(t)})$}
		\EndFor{}
		\Return{$\tX^{(T)}$}
		\EndProcedure
	\end{algorithmic}
\end{algorithm}

Here, it is interesting to observe that while the $\tW^{(t)}$ update is reminiscent of a column action update, it is equivalent to performing a TRK update on the t-linear system $\tU^*\tW = 0$. Also, we would like to note that this is different from a Gauss-Seidel-like column update as it cannot be viewed as a coordinate minimization.

The second part of our main result, Theorem~\ref{theorem:factorized}~\eqref{thm:facTBREK}, illustrates that this method converges at least linearly in expectation to the least-squares solution of an inconsistent tensor linear system under some mild assumptions.

\begin{definition}
    Let $\tU \in \R^{m \times m_1 \times p}$ and $\tV \in \R^{m_1 \times n \times p}.$  Define $\alpha_\tV = 1 - \sigma_{\min} (\E[\text{bcirc}(\mathcal{P}_{\tV_{\nu::}})])$, $\alpha_\tU = 1 - \sigma_{\min} (\E[\text{bcirc}(\mathcal{P}_{\tU_{\mu::}})])$, $\beta_\tU = 1 - \sigma_{\min} \left(\E[\text{bcirc}(\mathcal{P}_{\tU^\star_{:l:}})]\right)$, $\alpha_{\max} = \max\{\alpha_\tU, \alpha_\tV\}$, $\alpha_{\min} = \min\left\{ \frac{\alpha_\tV}{\alpha_\tU} , \frac{\alpha_\tU}{\alpha_\tV} \right\}$, $\phi_{\max} := \max\{\alpha_\tU, \beta_\tU\}$, $\phi_{\min} := \min\left\{ \frac{\alpha_{\tU}}{\beta_{\tU}}, \frac{\beta_{\tU}}{\alpha_{\tU}}\right\}$,
\[ \theta_\tV = \begin{cases} \frac{d_{T_{\tV}}p c_{\max}(T_\tV)}{\sum_{\nu \in T_\tV} \sigma_{\min +}^2(\bcirc(\tV_{\nu::}))} & \text{if } prob(\nu_t) = \frac{\sigma_{\min +}^2(\bcirc(\tV_{\nu_t::}))}{\sum_{\nu \in T_\tV} \sigma_{\min +}^2(\bcirc(\tV_{\nu::}))}\\ d_{T_{\tV}}p \E \left[\sigma_{\min +}^{-2}(\bcirc(\tV_{\nu : :}))\right] & \text{else} \end{cases},
\]
and
\[ \theta_\tU = \begin{cases} \frac{d_{T_{\tU}}p c_{\max}(T_\tU)}{\sum_{\mu \in T_\tU} \sigma_{\min +}^2(\bcirc(\tU_{\mu::}))} & \text{if } prob(\mu_t) = \frac{\sigma_{\min +}^2(\bcirc(\tU_{\mu_t::}))}{\sum_{\mu \in T_\tU} \sigma_{\min +}^2(\bcirc(\tU_{\mu::}))}\\ d_{T_{\tU}}p \E \left[\sigma_{\min +}^{-2}(\bcirc(\tU_{\mu : :}))\right] & \text{else} \end{cases}.
\]\label{def:convergence_constants}
Here, the constants $d_{T_\tV}, d_{T_\tU}, c_{max}(T_\tV)$ and $c_{max}(T_\tU)$ are as defined in Definition~\ref{def:block_sampling_definitions}.
\end{definition}

\begin{theorem}
\label{theorem:factorized}
Let $\tU \in \R^{m \times m_1 \times p}, \tV \in \R^{m_1 \times n \times p}$, and $\tY \in \R^{m \times l \times p}.$  Let $\alpha_{\tV}, \alpha_{\tU}, \beta_\tU, \alpha_{\max}, \alpha_{\min}, \phi_{\min}, \theta_{\tV},$ and $\theta_{\tU}$ be as defined in Definition~\ref{def:convergence_constants}.

\begin{enumerate}
    \item Suppose $\tU\tV\tX = \tY$ is a consistent tensor linear system with the outer system $\tU\tZ = \tY$ consistent with unique solution $\tZ^\ddagger$ and the inner system $\tV\tX = \tZ^\ddagger$ consistent.
    Let $\tX^\ddagger$ be the tensor of minimal Frobenius norm such that $\tU \tV \tX^\ddagger = \tY$ and $\tX^{(t)}$ the $t$-th approximation of $\tX^\ddagger$ given by the updates of Algorithm~\ref{alg:facTBRK}. Then, the expected error at the $t$-th iteration satisfies
    \[\mathbb{E}\| \tX^{(t)} - \tX^\ddagger \|_F^2 \le \begin{cases} \left(\alpha_\tV^t + \theta_\tV  \frac{\alpha_{\max}^t \alpha_{\min}}{1 - \alpha_{\min}}\sigma_{\max}^2(\bcirc(\tV))\right)\|\tX^\ddagger\|_F^2 & \text{if } 0 \not= \alpha_\tU \neq \alpha_\tV \not= 0 \\
     \left(\alpha_\tV^t + \theta_\tV t \alpha_{\max}^t\sigma_{\max}^2(\bcirc(\tV))\right) \|\tX^\ddagger\|_F^2 & \text{ else}.
    \end{cases}\]\label{thm:facTBRK}

 \item Suppose $\tZ^\ddagger$ is the unique minimizer of $\|\tU\tZ - \tY\|_F^2$ and the inner system $\tV\tX = \tZ^\ddagger$ is consistent.
    Let $\tX^\ddagger$ be the tensor of minimal Frobenius norm such that $\tV \tX^\ddagger = \tZ^\ddagger$ and $\tX^{(t)}$ the $t$-th approximation of $\tX^\ddagger$ given by the updates of Algorithm~\ref{alg:facTBREK}. Then, if $\alpha_\tV \not= 0$, the expected error at the $t$-th iteration satisfies
    \begin{align*}
    &\mathbb{E}\| \tX^{(t)} - \tX^\ddagger \|_F^2 \\
    &\le  \begin{cases}
      \left(\alpha_\tV^t + \theta_\tV \sigma_{\max}^2(\bcirc(\tV)) (t+1) \alpha_\tV^{\lfloor t/2 \rfloor}\left(\alpha_\tU^{\lfloor t/2 \rfloor}  + \theta_\tU  \frac{\phi_{\max}^{\lfloor t/2 \rfloor} \phi_{\min}}{1 - \phi_{\min}} \sigma_{\max}^2(\bcirc(\tU))\right)\right)\|\tX^\ddagger\|_F^2 & \text{ if } 0 \not= \alpha_\tU \not= \beta_\tU \not= 0 \\
     \left(\alpha_\tV^t + \theta_\tV \sigma_{\max}^2(\bcirc(\tV)) (t+1) \alpha_\tV^{\lfloor t/2 \rfloor}\left(\alpha_\tU^{\lfloor t/2 \rfloor}  + \theta_\tU t \phi_{\max}^{\lfloor t/2 \rfloor}\sigma_{\max}^2(\bcirc(\tU))\right)\right)\|\tX^\ddagger\|_F^2 & \text{ else}.
  \end{cases}
  \end{align*}\label{thm:facTBREK}
\end{enumerate}
\end{theorem}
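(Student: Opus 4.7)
The plan is to treat each part as an interlaced Kaczmarz analysis with cascading errors, propagating a bound from the outer system to the inner system (and, in Part 2, from the auxiliary system $\tU^*\tW = \vzero$ to the outer and then the inner). Throughout, I would work with the error tensors $\tE_\tZ^{(t)} := \tZ^{(t)} - \tZ^\ddagger$, $\tE_\tX^{(t)} := \tX^{(t)} - \tX^\ddagger$, and (for Part 2) $\tE_\tW^{(t)} := \tW^{(t)} - \tW^\ddagger$ where $\tW^\ddagger = \tY - \tU\tZ^\ddagger$. The crucial structural identity, available because $\tV\tX^\ddagger = \tZ^\ddagger$ (and, in Part 2, because $\tU^*\tW^\ddagger = \vzero$), is that each Kaczmarz-type step decomposes as a projection of the previous error onto the null space of a block plus a perturbation term lying in the orthogonal range of the block's adjoint. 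Taking the Frobenius norm therefore splits cleanly into two orthogonal pieces. All spectral computations should be pushed through Fact~\ref{fact:equivalent_systems} so that block-circulant matrices and the quantities in Definition~\ref{def:convergence_constants} control the contraction.

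For Part 1, the $\tZ^{(t)}$ updates decouple from $\tX^{(t)}$ and form a block TRK iteration on the consistent outer system, so $\mathbb{E}\|\tE_\tZ^{(t)}\|_F^2 \le \alpha_\tU^t \|\tZ^\ddagger\|_F^2$ follows from a direct application of the block tensor RK analysis based on $\sigma_{\min}(\E[\bcirc(\mathcal{P}_{\tU_{\mu::}})])$. For the inner recursion, I would rewrite the update as
\[
\tE_\tX^{(t)} = (\tI - \mathcal{P}_{\tV_{\nu_t::}^*})\tE_\tX^{(t-1)} + \tV_{\nu_t::}^*(\tV_{\nu_t::}\tV_{\nu_t::}^*)^{-1}\tE_\tZ^{(t)}|_{\nu_t::},
\]
exploit orthogonality of the two summands, bound the perturbation term by $\sigma_{\min+}^{-2}(\bcirc(\tV_{\nu_t::}))\|\tE_\tZ^{(t)}|_{\nu_t::}\|_F^2$, and average over $\nu_t$; the two sampling distributions produce the two branches in the definition of $\theta_\tV$, while summing the slice norms introduces the $d_{T_\tV}p\, c_{\max}(T_\tV)$ factor. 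This yields the one-step recursion
\[
\mathbb{E}\|\tE_\tX^{(t)}\|_F^2 \le \alpha_\tV\, \mathbb{E}\|\tE_\tX^{(t-1)}\|_F^2 + \theta_\tV\, \mathbb{E}\|\tE_\tZ^{(t)}\|_F^2.
\]
Unrolling and substituting $\mathbb{E}\|\tE_\tZ^{(s)}\|_F^2 \le \alpha_\tU^s \|\tZ^\ddagger\|_F^2 \le \alpha_\tU^s \sigma_{\max}^2(\bcirc(\tV))\|\tX^\ddagger\|_F^2$ produces a geometric sum $\sum_{s=0}^{t-1}\alpha_\tV^{t-1-s}\alpha_\tU^{s+1}$; evaluating this gives the closed form $\alpha_{\min}\alpha_{\max}^t/(1-\alpha_{\min})$ when $\alpha_\tU \neq \alpha_\tV$ and $t\,\alpha_{\max}^t$ otherwise.

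For Part 2, I would first observe (as noted by the authors) that the $\tW$ update is a TRK step on $\tU^*\tW = \vzero$ with the \emph{column slices} of $\tU$ playing the role of rows; the analogous argument gives $\mathbb{E}\|\tE_\tW^{(t)}\|_F^2 \le \beta_\tU^t \|\tW^\ddagger\|_F^2 \le \beta_\tU^t \|\tY\|_F^2$. Next, the $\tZ$ recursion uses $\tU\tZ^\ddagger = \tY - \tW^\ddagger$ so that the effective residual in step~8 is $\tU_{\mu_t::}\tE_\tZ^{(t-1)} - (\tW^{(t)} - \tW^\ddagger)_{\mu_t::}$; the same orthogonal-decomposition argument as in Part 1 yields
\[
\mathbb{E}\|\tE_\tZ^{(t)}\|_F^2 \le \alpha_\tU\, \mathbb{E}\|\tE_\tZ^{(t-1)}\|_F^2 + \theta_\tU\, \mathbb{E}\|\tE_\tW^{(t)}\|_F^2,
\]
which is precisely the Part 1 recursion with $(\alpha_\tV, \alpha_\tU, \theta_\tV)$ replaced by $(\alpha_\tU, \beta_\tU, \theta_\tU)$; unrolling produces a bound of the form $\alpha_\tU^t + \theta_\tU \phi_{\min}\phi_{\max}^t/(1-\phi_{\min})\cdot\sigma_{\max}^2(\bcirc(\tU))$ times $\|\tZ^\ddagger\|_F^2$ (or $t\phi_{\max}^t$ in the degenerate case). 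Finally the $\tX$ recursion is identical in form to Part 1 but now the driving error $\mathbb{E}\|\tE_\tZ^{(t)}\|_F^2$ is itself a cascade. To compress the resulting double sum into the stated closed form I would use the splitting trick: for each $s$ in $\sum_{s=0}^{t-1}\alpha_\tV^{t-1-s}\,\mathbb{E}\|\tE_\tZ^{(s+1)}\|_F^2$, bound by $\alpha_\tV^{\lfloor t/2 \rfloor}\,\mathbb{E}\|\tE_\tZ^{(s+1)}\|_F^2$ when $s \ge \lfloor t/2 \rfloor$ and by $\alpha_\tV^{t-1-s}\,\mathbb{E}\|\tE_\tZ^{(\lfloor t/2 \rfloor)}\|_F^2$ when $s < \lfloor t/2 \rfloor$, so that each factor appears at power at least $\lfloor t/2 \rfloor$; this produces the $\alpha_\tV^{\lfloor t/2 \rfloor}\alpha_\tU^{\lfloor t/2 \rfloor}$ and $\phi_{\max}^{\lfloor t/2 \rfloor}$ terms, and the $t+1$ prefactor comes from summing the $t$ terms after the splitting.

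The main obstacle is the careful accounting in Part 2. Three sources of randomness interact through the cascade, and the one-step contraction in $\alpha_\tV$ must tolerate a perturbation that itself satisfies only an expected bound — so the recursion must be written in expectation with correct conditional-expectation ordering, and the slice-norm bound $\|\tE_\tZ^{(t)}|_{\nu_t::}\|_F^2 \le \|\tE_\tZ^{(t)}\|_F^2$ must be justified when $\nu_t$ is sampled independently of (or after a re-conditioning on) the history determining $\tE_\tZ^{(t)}$. The splitting trick that produces $\lfloor t/2 \rfloor$ is cosmetic but delicate, since it must balance powers so that the bound remains valid uniformly in the relative sizes of $\alpha_\tV$, $\alpha_\tU$, and $\beta_\tU$. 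Everything else reduces to the block-circulant Kaczmarz identities and the geometric-sum evaluations already packaged into $\theta_\tU$, $\theta_\tV$, $\alpha_{\min}$, and $\phi_{\min}$.
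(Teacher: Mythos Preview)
Your proposal is correct and follows essentially the same route as the paper: the same orthogonal error decomposition (the paper's Lemma~\ref{lem:orthogonality}), the same spectral bounds via block-circulant matrices (Lemmas~\ref{lem:residual_error_bound} and~\ref{lem:almost_projector_bound}) producing the one-step recursion $\mathbb{E}\|\tE_\tX^{(t)}\|_F^2 \le \alpha_\tV\,\mathbb{E}\|\tE_\tX^{(t-1)}\|_F^2 + \theta_\tV\,\mathbb{E}\|\tE_\tZ^{(t)}\|_F^2$ (packaged as Lemma~\ref{lem:expectation_decoupling}), and for Part~2 the same three-level cascade with the TBREK bound (the paper's Theorem~\ref{thm:tbrek_conv}) inserted as the middle layer, followed by the same $\lfloor t/2\rfloor$ splitting of the resulting sum.

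One small slip to fix: in your $\tW$-analysis you write $\mathbb{E}\|\tE_\tW^{(t)}\|_F^2 \le \beta_\tU^t\|\tW^\ddagger\|_F^2$, but the initial error is $\tE_\tW^{(0)} = \tW^{(0)} - \tW^\ddagger = \tY - (\tY - \tU\tZ^\ddagger) = \tU\tZ^\ddagger$, not $\tW^\ddagger$; the paper uses precisely $\|\tU\tZ^\ddagger\|_F^2 \le \sigma_{\max}^2(\bcirc(\tU))\|\tZ^\ddagger\|_F^2$ here, which is what produces the $\sigma_{\max}^2(\bcirc(\tU))$ factor in the stated bound (your fallback $\|\tY\|_F^2$ would give a looser constant that does not match the theorem).
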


\begin{remark}
    We note that when the blocks are of size one, we have $d_{T_{\tU}} = 1$ and $d_{T_{\tV}} = 1$.  If the set of allowable blocks are $T_{\tU} = [m]$ and $T_{\tV} = [m_1]$, then $\alpha_\tU$ and $\alpha_\tV$ are equivalent to the parameter $\rho$ defined in~\cite{ma2022randomized} (Section 4) for  the tensors $\tU$ and $\tV$.
\end{remark}

\begin{remark}
     We note that these results hold for matrix-matrix and matrix-vector systems when the third tensor dimension is $p = 1$.  These results generalize those of~\cite{ma2018iterative, zouzias2013randomized} for matrix-vector systems, and provide new results for the matrix-matrix system regime.
\end{remark}

\begin{remark}
 When $\tV = \tI$, $|\nu_t| = m_1$ and $|\mu_t| = 1$, then FacTBRK coincides with the TRK method~\cite{ma2022randomized} and our result bounds the  expected error at the t-th iteration as
     \[\mathbb{E}\| \tX^{(t)} - \tX^\ddagger \|_F^2 \le  m_{1}pt \alpha_{\tU}^t \| \tX^{(0)} - \tX^\ddagger \|_F^2,\]
which matches the bound derived in~\cite{ma2022randomized} up to a constant. \end{remark}

\subsection{Organization of the paper} In Section \ref{section:convergence}, we provide theoretical convergence guarantees for all our proposed algorithms.  Some fundamental lemmas that will be used in the main proofs of the paper are detailed in Subsection~\ref{sub:fundamental_lemmas}. In Subsection~\ref{subsec:tbrk}, we provide theoretical guarantees for TBRK, a block variant of the TRK algorithm, and Subsection~\ref{subsec:TBREK} details the convergence analysis of TBREK, an extended block version of the TBREK algorithm. With these results in place, in Subsection~\ref{subsec:factbrk_and_factbrek}, we then provide convergence guarantees for the FacTBRK and FacTBREK algorithms for solving factorized t-product regression problems.
In Subsection~\ref{sec:Synthetic_Data}, we provide a suite of numerical experiments on synthetic data that analyze the performance of our proposed algorithms in different settings. Subsequently, in Subsection~\ref{subsec:video_deblurring}, we present the results of applying our proposed methods to video deblurring, specifically, for recovering frames from an MRI video that have been sequentially blurred twice.

\section{Convergence Analysis}
\label{section:convergence}
In this section, we prove that Algorithm~\ref{alg:facTBRK} and Algorithm~\ref{alg:facTBREK} converge at least linearly in expectation, as stated in Theorem~\ref{theorem:factorized}.  Our results generalize that of~\cite{ma2018iterative}, Theorem 4.1.

Note that the $t^{th}$ iterations of FacTBRK, Algorithm~\ref{alg:facTBRK}, and FacTBREK, Algorithm~\ref{alg:facTBREK}, involve sampling of index sets $\mu_t \in T_{\tU}$, $\nu_t \in T_{\tV}$, and $l_t \in [m_1]$.  These samples are independent, so the joint density function of these variables factorizes into the products of the marginal density functions, and the expectation with respect to the joint distribution, $\mathbb{E}$, may be calculated by sequential expectation with respect to marginal distributions of the individual samples.  We denote expectation with respect to the marginal distribution of $\mu_t$ as $\mathbb{E}_{T_{\tU}}$ and expectation with respect to the marginal distribution of $\nu_t$ as $\mathbb{E}_{T_{\tV}}$. We will use this fact to simplify calculations in the sections that follow.

\subsection{Fundamental Lemmas}
\label{sub:fundamental_lemmas}
We begin by establishing three fundamental lemmas. The first lemma establishes orthogonality between two components of the error vectors we will be bounding.  This result allows us to apply a ``Pythagorean theorem''-like result, which simplifies our analysis.

\begin{lemma}\label{lem:orthogonality}
We have that
\begin{equation*}
    \langle (\tI - \mathcal{P}_{\tM})\tX, \tM^* \tY \rangle = 0, \text{ and thus, } \|(\tI - \mathcal{P}_{\tM})\tX + \tM^* \tY\|_F^2 = \|(\tI - \mathcal{P}_{\tM})\tX\|_F^2 + \|\tM^* \tY\|_F^2\label{eq:governing_orthogonality}
\end{equation*}
for any $\tX, \tY$ and $\tM$ such that $(\tM\tM^*)^{-1}$ exists.
\end{lemma}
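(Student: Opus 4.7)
The plan is to deduce the inner product identity from two facts: (i) $\mathcal{P}_\tM$ is designed so that the complementary tensor $(\tI - \mathcal{P}_\tM)\tX$ is annihilated on the left by $\tM$; and (ii) the Frobenius inner product on tensors satisfies an adjoint relation with respect to the t-product, namely $\langle \tA\tB, \tC \rangle = \langle \tB, \tA^*\tC \rangle$ whenever the dimensions align.

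First, I would verify directly that $\tM(\tI - \mathcal{P}_\tM)\tX = 0$ by substituting the definition of $\mathcal{P}_\tM$:
\[
\tM(\tI - \mathcal{P}_\tM)\tX = \tM\tX - (\tM\tM^*)(\tM\tM^*)^{-1}\tM\tX = \tM\tX - \tM\tX = 0,
\]
using associativity of the t-product and the standing hypothesis that $(\tM\tM^*)^{-1}$ exists.

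Next, I would invoke the adjoint identity $\langle \tA\tB, \tC \rangle = \langle \tB, \tA^*\tC \rangle$ to move $\tM^*$ across the inner product. Via Fact~\ref{fact:equivalent_systems}, the tensor Frobenius inner product equals the matrix Frobenius inner product of the corresponding $\unfold(\cdot)$ objects against the block-circulant form, and the definition of $\tA^*$ is calibrated precisely so that $\bcirc(\tA^*) = \bcirc(\tA)^*$; this reduces the desired adjoint relation to the standard matrix adjoint identity. Applying it to the current setting yields
\[
\langle (\tI - \mathcal{P}_\tM)\tX,\, \tM^*\tY \rangle = \langle \tM(\tI - \mathcal{P}_\tM)\tX,\, \tY \rangle = \langle 0,\, \tY \rangle = 0.
\]

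The Pythagorean-like identity then follows by expanding the squared norm,
\[
\|(\tI - \mathcal{P}_\tM)\tX + \tM^*\tY\|_F^2 = \|(\tI - \mathcal{P}_\tM)\tX\|_F^2 + 2\langle (\tI - \mathcal{P}_\tM)\tX,\, \tM^*\tY \rangle + \|\tM^*\tY\|_F^2,
\]
and using the orthogonality just established to eliminate the cross term. The only real conceptual step is the adjoint property for the t-product against the Frobenius inner product; this is standard in the t-product literature and follows from the block-circulant representation, but it deserves a single explicit sentence of justification in the final write-up so that the identity $\langle \tM\tA, \tY \rangle = \langle \tA, \tM^*\tY \rangle$ is not invoked out of thin air.
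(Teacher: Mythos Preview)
Your proposal is correct and follows essentially the same approach as the paper: both arguments use the adjoint identity for the t-product Frobenius inner product (which the paper cites as \cite[Lemma 2.7]{chen2021regularized}) to move $\tM^*$ across and then observe that $\tM - \tM\tM^*(\tM\tM^*)^{-1}\tM = 0$. The only cosmetic difference is that you compute $\tM(\tI - \mathcal{P}_\tM)\tX = 0$ first and then apply the adjoint, whereas the paper applies the adjoint first and then simplifies; your additional sentence justifying the adjoint via the block-circulant representation is a nice touch.
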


\begin{proof}
We prove this result by noting that
\begin{align*}
    \langle (\tI - \mathcal{P}_\tM) \tX, \tM^* \tY \rangle &= (\tI - \tM^*(\tM\tM^*)^{-1} \tM) \tX, \tM^* \tY \rangle \\
    & = (\tM - \tM\tM^* (\tM\tM^*)^{-1} \tM) \tX, \tY \rangle = 0,
\end{align*}
where the second equation follows from \cite[Lemma 2.7]{chen2021regularized}.
The Pythagorean theorem follows from using orthogonality in the inner product expansion of the error norm.
\end{proof}

The second fundamental lemma that we prove gives us an upper bound on the Frobenius norm of a product of two tensors.  It additionally relates the expectation of the residual error $\| \mathcal{P}_{\tV_{\nu_t::}}(\tX^\ddagger - \tX^{(t-1)} )\|_F^2$ to that of the approximation error $\|  \tX^\ddagger - \tX^{(t-1)} \|_F^2$.  This type of theoretical bound is common among analyses of Kaczmarz-type methods and generally hinges on the minimum singular value of the projector matrix.  In this case, as the projector operator is a tensor, we utilize the minimum singular value of the expectation of a matricization of the tensor, $ \E_t\left[\text{bcirc}\left(\mathcal{P}_{\tV_{\nu_t::}}\right)\right]$.

\begin{lemma}
\label{lem:residual_error_bound}
Suppose the t-product of $\tU$ and $\tX$ is defined, then $$\|\tU \tX\|_F^2 \le \sigma_{\max}^2(\bcirc(\tU)) \|\tX\|_F^2.$$
Suppose $\tM$ is a random tensor variable such that $\tM\tM^*$ is invertible and that $\tX$ and $\tM$ are independent, then
$$\mathbb{E}_{\tM}\left[  \| \mathcal{P}_{\tM}\tX\|_F^2  \right]
        \ge \sigma_{min}(\mathbb{E}_\tM[\bcirc(\mathcal{P}_{\tM})]) \|  \tX \|_F^2.$$
\end{lemma}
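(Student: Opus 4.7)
\textbf{Proof plan for Lemma~\ref{lem:residual_error_bound}.}

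\emph{Part 1 (tensor spectral norm bound).} The plan is to use Fact~\ref{fact:equivalent_systems} to reduce to a matrix inequality. The $\unfold$ operation is a bijection that reshuffles entries, so it is an isometry for the Frobenius norm, i.e.\ $\|\tX\|_F = \|\unfold(\tX)\|_F$ for any third-order tensor $\tX$. Applying this and the identity $\unfold(\tU\tX) = \bcirc(\tU)\unfold(\tX)$ gives
\[
\|\tU\tX\|_F^2 = \|\bcirc(\tU)\unfold(\tX)\|_F^2 \le \sigma_{\max}^2(\bcirc(\tU))\,\|\unfold(\tX)\|_F^2 = \sigma_{\max}^2(\bcirc(\tU))\,\|\tX\|_F^2,
\]
where the middle step is the standard matrix inequality $\|AB\|_F \le \sigma_{\max}(A)\|B\|_F$ applied column-by-column.

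\emph{Part 2 (expected projection lower bound).} The strategy is to push the projection into the block-circulant picture, exploit that $\mathcal{P}_\tM$ is an orthogonal projection, and then take expectation against $\tM$ using independence. First I would record the structural fact that $\bcirc$ is multiplicative under the t-product and that $\bcirc(\tA^*) = \bcirc(\tA)^\top$ for real tensors. Since $\mathcal{P}_\tM$ satisfies $\mathcal{P}_\tM^2 = \mathcal{P}_\tM$ and $\mathcal{P}_\tM^* = \mathcal{P}_\tM$, these identities force $\bcirc(\mathcal{P}_\tM)$ to be a symmetric idempotent matrix, hence an orthogonal projector. In particular $\bcirc(\mathcal{P}_\tM)^\top \bcirc(\mathcal{P}_\tM) = \bcirc(\mathcal{P}_\tM)$.

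Next I would combine the Frobenius isometry with the block-circulant description used in Part~1 to write
\[
\|\mathcal{P}_\tM \tX\|_F^2 = \|\bcirc(\mathcal{P}_\tM)\unfold(\tX)\|_F^2 = \sum_{j} \unfold(\tX)_{:j}^{\top}\,\bcirc(\mathcal{P}_\tM)\,\unfold(\tX)_{:j},
\]
the last equality from expanding $\|\cdot\|_F^2$ column-wise and invoking symmetric idempotence. Now I would take the expectation $\mathbb{E}_{\tM}$ inside the (finite) sum; independence of $\tM$ and $\tX$ lets me commute the expectation past $\unfold(\tX)_{:j}$, yielding
\[
\mathbb{E}_\tM\!\left[\|\mathcal{P}_\tM \tX\|_F^2\right] = \sum_{j} \unfold(\tX)_{:j}^{\top}\,\mathbb{E}_\tM[\bcirc(\mathcal{P}_\tM)]\,\unfold(\tX)_{:j}.
\]
The matrix $\mathbb{E}_\tM[\bcirc(\mathcal{P}_\tM)]$ is symmetric PSD (as an average of symmetric PSD matrices), so its smallest singular value equals its smallest eigenvalue, and Rayleigh's inequality bounds each quadratic form from below by $\sigma_{\min}(\mathbb{E}_\tM[\bcirc(\mathcal{P}_\tM)])\,\|\unfold(\tX)_{:j}\|_2^2$. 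Summing over $j$ and using $\|\unfold(\tX)\|_F = \|\tX\|_F$ delivers the claimed inequality.

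\emph{Main obstacle.} The only nontrivial point is verifying the three algebraic properties of $\bcirc$ that the argument leans on: multiplicativity under the t-product, compatibility with the conjugate transpose ($\bcirc(\tM^*) = \bcirc(\tM)^\top$), and the isometry $\|\tX\|_F = \|\unfold(\tX)\|_F$. These are standard consequences of Definition~\ref{def:t-product} but warrant an explicit sentence so that the deduction $\bcirc(\mathcal{P}_\tM)$ is a symmetric idempotent is clearly justified. Once those ingredients are in hand, both inequalities fall out directly from matrix-norm and Rayleigh-quotient estimates.
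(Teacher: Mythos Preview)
Your proposal is correct and follows essentially the same approach as the paper's own proof: both parts reduce to the block-circulant/unfold picture, split the Frobenius norm column-by-column, and then invoke the $\sigma_{\max}$ bound for Part~1 and the symmetric-idempotence of $\bcirc(\mathcal{P}_\tM)$ plus the Rayleigh quotient for Part~2. Your write-up is actually slightly more explicit than the paper's in justifying why $\bcirc(\mathcal{P}_\tM)$ is a symmetric projector, which is a good touch.
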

\begin{proof}
    We first note that for any $\tU$, $\|\tU\tX\|_F^2$ may be rewritten and bounded as
\begin{align}
    \|\tU\tX\|_F^2 &= \| \text{bcirc}(\tU) \text{unfold}(\tX) \|_F^2 \nonumber\\&= \sum_{i} \| \text{bcirc}(\tU) \text{unfold}(\tX)_{: i}\|_2^2 \label{eq:column_decoupling}\\
    &\le \sigma_{\max}^2(\bcirc(\tU)) \sum_{i} \|\text{unfold}(\tX)_{: i}\|_2^2 = \sigma_{\max}^2(\bcirc(\tU)) \|\tX\|_F^2\nonumber.
\end{align}

Now, we replace $\tU$ with $\mathcal{P}_\tM$ and begin from~\eqref{eq:column_decoupling} by expanding the norm using the inner product, we have
\begin{align*}
    \mathbb{E}_{\tM}\left[  \| \mathcal{P}_{\tM}\tX\|_F^2  \right]
    &= \sum_i \mathbb{E}_{\tM} \left[ \langle \bcirc(\mathcal{P}_{\tM}) \text{unfold}(\tX)_{:i}, \bcirc(\mathcal{P}_{\tM}) \text{unfold}(\tX)_{:i}  \rangle \right] \\
    &= \sum_i \langle \mathbb{E}_{\tM}[\bcirc(\mathcal{P}_{\tM})] \text{unfold}(\tX)_{:i}, \unfold(\tX)_{:i} \rangle,
\end{align*}
where the last equation follows since $\bcirc(\mathcal{P}_{\tM})$ is an orthogonal projection.
Since $\mathbb{E}_{\tM}[\bcirc(\mathcal{P}_{\tM})] $ is symmetric, we have
\begin{align*}
        \mathbb{E}_{\tM}\left[\|\mathcal{P}_{\tM}(\tX)\|_F^2  \right]
        &\geq \sigma_{min}(\mathbb{E}_{\tM}[\bcirc(\mathcal{P}_{\tM})]) \sum_{i} \|  \text{unfold}(\tX)_{:i} \|^2 \\
        &= \sigma_{min}(\mathbb{E}_{\tM}[\bcirc(\mathcal{P}_{\tM})]) \sum_{i} \|\tX_{:i:}\|_F^2\\
        &= \sigma_{min}(\mathbb{E}_{\tM}[\bcirc(\mathcal{P}_{\tM})]) \|\tX\|_F^2.
\end{align*}
\end{proof}

The third fundamental lemma provides an upper bound on the Frobenius norm of matrices of the form $\bcirc(\tM^*(\tM\tM^*)^{-1})$ such that $(\tM\tM^*)^{-1}$ exists.

\begin{lemma}
    If $\tM \in \mathbb{R}^{l \times n \times p}$ then $\|\mathrm{bcirc}(\tM^*(\tM \tM^*)^{-1})\|_F^2 \le lp \sigma_{min +}^{-2}(\bcirc(\tM))$. \label{lem:almost_projector_bound}
\end{lemma}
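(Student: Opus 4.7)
The plan is to reduce the claim to a standard fact about the Frobenius norm of a Moore--Penrose pseudoinverse, via the ring-homomorphism properties of the block-circulant operator.

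First I would use the well-known properties that $\bcirc$ is multiplicative with respect to the t-product and commutes with conjugate transpose and with inversion, to rewrite
\[\bcirc\bigl(\tM^*(\tM\tM^*)^{-1}\bigr) = \bcirc(\tM)^*\bigl(\bcirc(\tM)\bcirc(\tM)^*\bigr)^{-1}.\]
Here the invertibility of $\tM\tM^*$ (Assumption~\ref{assump:invertibility}) lifts to invertibility of $\bcirc(\tM)\bcirc(\tM)^*$, so the right-hand side is well-defined and equals the Moore--Penrose pseudoinverse $\bcirc(\tM)^{\dagger}$ of $\bcirc(\tM)$. In particular, $\bcirc(\tM)\in\R^{lp\times np}$ has full row rank $lp$.

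Next I would compute the Frobenius norm via the SVD of $\bcirc(\tM)$. Writing $\bcirc(\tM) = U\Sigma V^*$ with exactly $lp$ nonzero singular values $\sigma_1,\dots,\sigma_{lp}$ (all coinciding with the positive singular values since $\bcirc(\tM)$ has full row rank), standard linear algebra gives
\[\|\bcirc(\tM)^{\dagger}\|_F^2 \;=\; \sum_{i=1}^{lp}\sigma_i^{-2} \;\le\; lp\cdot \sigma_{\min}^{-2}(\bcirc(\tM)) \;=\; lp\cdot \sigma_{\min+}^{-2}(\bcirc(\tM)),\]
where the last equality uses that all $lp$ singular values of $\bcirc(\tM)$ are positive, so $\sigma_{\min}=\sigma_{\min+}$. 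Combining with the first step yields the lemma.

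The only nonroutine part is step one, where one needs the algebraic identities $\bcirc(\tA\tB)=\bcirc(\tA)\bcirc(\tB)$, $\bcirc(\tM^*)=\bcirc(\tM)^*$, and $\bcirc(\tK^{-1})=\bcirc(\tK)^{-1}$ whenever the t-product inverse exists. These are standard consequences of the block-diagonalization of $\bcirc$ under the DFT (see Kilmer--Martin), and I would cite them rather than reprove them. Once these are in hand, the rest is immediate from the SVD bound.
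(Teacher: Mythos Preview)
Your proof is correct. Both your argument and the paper's begin identically, using the ring-homomorphism properties of $\bcirc$ to rewrite the quantity as $\|\bcirc(\tM)^*(\bcirc(\tM)\bcirc(\tM)^*)^{-1}\|_F^2$. The difference is only in how this matrix Frobenius norm is then bounded. You identify the matrix as $\bcirc(\tM)^{\dagger}$ and read off $\|\bcirc(\tM)^{\dagger}\|_F^2=\sum_{i=1}^{lp}\sigma_i^{-2}\le lp\,\sigma_{\min+}^{-2}$ directly from the SVD. The paper instead expands the squared norm as the trace $\langle \tI_{lp},(\bcirc(\tM)\bcirc(\tM)^*)^{-1}\rangle$, applies Cauchy--Schwarz, and then the Frobenius--spectral inequality $\|\cdot\|_F\le\sqrt{lp}\,\|\cdot\|_2$. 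Both routes arrive at exactly the same bound; your SVD argument is a bit more direct and makes the equality $\sigma_{\min}=\sigma_{\min+}$ explicit, while the paper's approach avoids invoking the pseudoinverse or full-rank structure and works purely with inner-product and norm inequalities.
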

\begin{proof}
    Note that
    \begin{align*}
        \|\bcirc(\tM^*(\tM \tM^*)^{-1})\|_F^2 &= \langle \bcirc(\tM)^* (\bcirc(\tM)\bcirc(\tM^*))^{-1}, \bcirc(\tM)^* (\bcirc(\tM)\bcirc(\tM^*))^{-1} \rangle \\
        &= \langle \tI_{lp},  (\bcirc(\tM)\bcirc(\tM^*))^{-1} \rangle\\
        &\le \|\tI_{lp}\|_F \|(\bcirc(\tM)\bcirc(\tM^*))^{-1}\|_F \\
        &\le \sqrt{lp} \sqrt{lp} \|(\bcirc(\tM)\bcirc(\tM^*))^{-1}\|_2,
    \end{align*}
    where the second equation follows from applying the adjoint operator in the inner product and simplifying, the first inequality follows from Cauchy-Schwartz, and the last from usual Frobenius-spectral norm inequalities.
\end{proof}

\subsection{Tensor block randomized Kaczmarz (TBRK)}
\label{subsec:tbrk}
FacTBRK, Algorithm~\ref{alg:facTBRK}, utilizes interlaced steps of tensor block randomized Kaczmarz, whose pseudocode we include below in Algorithm~\ref{alg:TBRK} for convenience.

\begin{remark}
    Note that TBRK, Algorithm~\ref{alg:TBRK}, is recovered as a special case of FacTBRK, Algorithm~\ref{alg:facTBRK}, when $\tV = \tI$ (so $m_1 = n$) and $T_{\tV} = \{[n]\}$.
\end{remark}

\begin{algorithm}
	\caption{Tensor Block Randomized Kaczmarz (TBRK)}\label{alg:TBRK}
	\begin{algorithmic}[1]
		\Procedure{TBRK}{$\tU,\tY,T$}
            \State{$\tX^{(0)} = \vzero$}
		\For{$t = 1, \ldots, T$}
		      \State{Sample $\mu_t \sim \mathcal{D}(T_\tU)$.}
                \State{$\tX^{(t)} = \tX^{(t-1)} - \tU_{\mu_t : :}^*(\tU_{\mu_t : :} \tU_{\mu_t: :}^*)^{-1}(\tU_{\mu_t::}\tX^{(t-1)} - \tY_{\mu_t: :})$}
		\EndFor{}
		\Return{$\tX^{(T)}$}
		\EndProcedure
	\end{algorithmic}
\end{algorithm}

We begin by proving that when applied to an inconsistent system, TBRK, Algorithm~\ref{alg:TBRK} converges at least linearly in expectation up to a convergence horizon which depends on the error in the system measurements.

\begin{lemma}[Convergence Horizon of TBRK]
\label{lemma:trk_convergence horizon}Consider tensors $\tU \in \R^{m\times m_1 \times p}$ and $\tY$, $\tE \in \R^{m \times l \times p}$. Let $\tX^\ddagger$  be the solution to the consistent system $\tU\tX = \tY$ that minimizes $\|\tX^{(0)} - \tX\|_F$.  Then, the iterates, $\tX^{(t)},$ of Algorithm~\ref{alg:TBRK} applied to the system defined by $\tU$ and measurements $\tY + \tE$ satisfies
 $$ \E^{(t-1)}\| \tX^{(t)} - \tX^{\ddagger}\|^2 \leq \alpha_\tU \| \tX^{(t-1)} - \tX^{\ddagger} \|^2 + d_{T_\tU} p \E^{(t-1)} \left[\sigma_{\min +}^{-2} \left(\text{bcirc}(\tU_{\mu::})\right)\|\tE_{\mu::}\|_F^2 \right]$$
 \end{lemma}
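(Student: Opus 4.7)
The strategy is the standard Kaczmarz-style decomposition of the error into an ``approximation'' piece that contracts and a ``noise'' piece that is absorbed into the convergence horizon. The crucial observation is that although the algorithm is run on the perturbed system with measurements $\tY + \tE$, the target $\tX^\ddagger$ satisfies the unperturbed system, so that $\tU_{\mu_t::}\tX^\ddagger = \tY_{\mu_t::}$ (not $\tY_{\mu_t::} + \tE_{\mu_t::}$). Subtracting $\tX^\ddagger$ from both sides of the TBRK update (applied with right-hand side $\tY + \tE$) and using this identity lets me rewrite the error as
\[
\tX^{(t)} - \tX^\ddagger \;=\; (\tI - \mathcal{P}_{\tU_{\mu_t::}})(\tX^{(t-1)} - \tX^\ddagger) \;+\; \tU_{\mu_t::}^*(\tU_{\mu_t::}\tU_{\mu_t::}^*)^{-1}\tE_{\mu_t::}.
\]

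Next I would invoke Lemma~\ref{lem:orthogonality} with $\tM = \tU_{\mu_t::}$, $\tX = \tX^{(t-1)} - \tX^\ddagger$, and $\tY = (\tU_{\mu_t::}\tU_{\mu_t::}^*)^{-1}\tE_{\mu_t::}$ to split
\[
\|\tX^{(t)} - \tX^\ddagger\|_F^2 \;=\; \|(\tI - \mathcal{P}_{\tU_{\mu_t::}})(\tX^{(t-1)} - \tX^\ddagger)\|_F^2 \;+\; \|\tU_{\mu_t::}^*(\tU_{\mu_t::}\tU_{\mu_t::}^*)^{-1}\tE_{\mu_t::}\|_F^2.
\]
For the first summand I use the idempotence of $\mathcal{P}_{\tU_{\mu_t::}}$ to write it as $\|\tX^{(t-1)}-\tX^\ddagger\|_F^2 - \|\mathcal{P}_{\tU_{\mu_t::}}(\tX^{(t-1)}-\tX^\ddagger)\|_F^2$ and then apply Lemma~\ref{lem:residual_error_bound} (which requires that $\tX^{(t-1)}-\tX^\ddagger$ is independent of $\mu_t$ conditioned on the history up to step $t-1$, but this is exactly how $\E^{(t-1)}$ is defined) to conclude that its conditional expectation is at most $\alpha_\tU\|\tX^{(t-1)}-\tX^\ddagger\|_F^2$ by the definition of $\alpha_\tU$.

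For the second summand, I translate to the matrix world via Fact~\ref{fact:equivalent_systems}, writing
\[
\|\tU_{\mu_t::}^*(\tU_{\mu_t::}\tU_{\mu_t::}^*)^{-1}\tE_{\mu_t::}\|_F^2 \;=\; \|\bcirc(\tU_{\mu_t::}^*(\tU_{\mu_t::}\tU_{\mu_t::}^*)^{-1})\,\unfold(\tE_{\mu_t::})\|_F^2,
\]
then apply the elementary matrix inequality $\|AB\|_F^2 \le \|A\|_F^2\,\|B\|_{\mathrm{op}}^2$ together with $\|\unfold(\tE_{\mu_t::})\|_{\mathrm{op}} \le \|\unfold(\tE_{\mu_t::})\|_F = \|\tE_{\mu_t::}\|_F$. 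Plugging in Lemma~\ref{lem:almost_projector_bound} with $l = |\mu_t| \le d_{T_\tU}$ bounds this by $d_{T_\tU} p\,\sigma_{\min+}^{-2}(\bcirc(\tU_{\mu_t::}))\,\|\tE_{\mu_t::}\|_F^2$. Taking the conditional expectation $\E^{(t-1)}$ and combining the two bounds yields the stated inequality.

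\textbf{Where the subtlety lies.} The only nontrivial step is the clean split in Lemma~\ref{lem:orthogonality}: it requires that the ``shifted target'' be precisely the noiseless solution so that the residual-in-range piece isolates exactly the noise $\tE_{\mu_t::}$. Once this is in place, the rest is a mechanical application of the three fundamental lemmas; the main bookkeeping task is tracking that $l = |\mu_t|$ so the block-size bound $d_{T_\tU}$ appears in the horizon term, and that the $\sigma_{\min+}^{-2}$ depends on $\mu_t$ and therefore must remain inside the conditional expectation.
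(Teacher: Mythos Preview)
Your proposal is correct and follows essentially the same approach as the paper's proof: the error decomposition, the Pythagorean split via Lemma~\ref{lem:orthogonality}, the contraction of the first term via Lemma~\ref{lem:residual_error_bound}, and the horizon bound on the second term via Lemma~\ref{lem:almost_projector_bound} are all exactly what the paper does. The only difference is cosmetic---you spell out the intermediate step $\|AB\|_F \le \|A\|_F\|B\|_{\mathrm{op}}$ and $\|\unfold(\tE_{\mu_t::})\|_{\mathrm{op}}\le\|\tE_{\mu_t::}\|_F$ where the paper simply cites Lemma~\ref{lem:almost_projector_bound} without elaboration.
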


 \begin{proof}
 Note that
 \begin{align*}
     \tX^{(t)} - \tX^\ddagger  &= \tX^{(t-1)} - \tX^{\ddagger} - \tU_{\mu_t::}^* (\tU_{\mu_t::} \tU_{\mu_t::}^*)^{-1} \left( \tU_{\mu_t::} \tX^{(t-1)} - \tY_{\mu_t::}-\tE_{\mu_t::} \right)\\
     &= \tX^{(t-1)} - \tX^{\ddagger} - \mathcal{P}_{\tU_{\mu_t::}} \left(  \tX^{(t-1)} - \tX^{\ddagger}\right) + \tU_{\mu_t::}^* (\tU_{\mu_t::} \tU_{\mu_t::}^*)^{-1}\tE_{\mu_t::} \\
     &= (\tI - \mathcal{P}_{\tU_{\mu_t::}})(\tX^{(t-1)} - \tX^{\ddagger}) +  \tU_{\mu_t::}^* (\tU_{\mu_t::} \tU_{\mu_t::}^*)^{-1}\tE_{\mu_t::}.
 \end{align*}

By Lemma~\ref{lem:orthogonality} eq.~\eqref{eq:governing_orthogonality}, we have $$\|\tX^{(t)} - \tX^\ddagger\|_F^2 = \|(\tI - \mathcal{P}_{\tU_{\mu_t::}})(\tX^{(t-1)} - \tX^{\ddagger})\|_F^2 +  \|\tU_{\mu_t::}^* (\tU_{\mu_t::} \tU_{\mu_t::}^*)^{-1}\tE_{\mu_t::}\|_F^2.$$
This yields
\begin{align} \E^{(t-1)} \| \tX^{(t)} - \tX^{\ddagger}\|^2 &\leq (1 - \sigma_{\min} (\E[\text{bcirc}(\mathcal{P}_{\tU_{\mu::}})])) \| \tX^{(t-1)} - \tX^{\ddagger} \|^2 + \E^{(t-1)}\|\tU_{\mu::}^* (\tU_{\mu::} \tU_{\mu::}^*)^{-1} \tE_{\mu::}\|^2 \nonumber \\
\label{eqn:Convergence horizon}
&\leq (1 - \sigma_{\min} (\E[\text{bcirc}(\mathcal{P}_{\tU_{\mu::}})])) \| \tX^{(t-1)} - \tX^{\ddagger} \|^2 + d_{T_\tU} p \E^{(t-1)} \left[\sigma_{\min +}^{-2} \left(\text{bcirc}(\tU_{\mu::})\right)\|\tE_{\mu::}\|_F^2 \right],
\end{align}
where the first inequality follows from Lemma~\ref{lem:residual_error_bound} and the second from Lemma~\ref{lem:almost_projector_bound}.
 \end{proof}

\subsection{Tensor block randomized extended Kaczmarz (TBREK)}
\label{subsec:TBREK}
FacTBREK, Algorithm~\ref{alg:facTBREK}, utilizes interlaced steps of tensor block randomized Kaczmarz, Algorithm~\ref{alg:TBRK}, and tensor block randomized extended Kaczmarz, whose pseudocode we include below in Algorithm~\ref{alg:TBREK} for convenience. This method is a block extension of the TREK method proposed in~\cite{du2021randomized}. Note that TBREK, Algorithm~\ref{alg:TBREK}, is recovered as a special case of FacTBREK, Algorithm~\ref{alg:facTBREK}, when $\tV = \tI$ (so $m_1 = n$) and the $T_{\tV} = \{[n]\}$.  We include an convergence analysis of TBREK in Theorem~\ref{thm:tbrek_conv}.

\begin{algorithm}
	\caption{Tensor Block Randomized Extended Kaczmarz (TBREK)}\label{alg:TBREK}
	\begin{algorithmic}[1]
		\Procedure{TBREK}{$\tU,\tY,T$}
            \State{$\tX^{(0)} = \vzero$}
            \State{$\tW^{(0)} = \tY$}
		\For{$t = 1, \ldots, T$}
		      \State{Sample $l_t \in [m_1]$.}
\State{$\tW^{(t)} = \tW^{(t-1)} - \tU_{:l_t:}(\tU^\ast_{:l_t :}\tU_{:l_t :})^{-1}(\tU^\ast_{:l_t:}\tW^{(t-1)})$}
		      \State{Sample $\mu_t \sim \mathcal{D}(T_\tU)$.}
                \State{$\tX^{(t)} = \tX^{(t-1)} - \tU_{\mu_t : :}^*(\tU_{\mu_t : :} \tU_{\mu_t: :}^*)^{-1}(\tU_{\mu_t::}\tX^{(t-1)} - \tY_{\mu_t: :} +\tW^{(t)}_{\mu_t : :})$}
		\EndFor{}
		\Return{$\tX^{(T)}$}
		\EndProcedure
	\end{algorithmic}
\end{algorithm}

Before we can prove Theorem~\ref{thm:tbrek_conv}, we prove Lemma~\ref{lem:usual_least_squares_theory} which illustrates that t-product least-squares problems enjoy the same orthogonal subspace properties as least-squares defined with the usual matrix product.

\begin{lemma}\label{lem:usual_least_squares_theory}
    Let $\tY_{\tR(\tU)^\perp}$ be the solution of $\tU^* \tW = \mathbf{0}$ that minimizes $\|\tY - \tW\|_F$.  Define $\tY_{\tR(\tU)} = \tY - \tY_{\tR(\tU)^\perp}.$  Let $\tX^\ddagger$ be the least-norm minimizer of $\|\tU \tX - \tY\|_F^2$.  Then $$\tY_{\tR(\tU)} = \tU \tX^\ddagger.$$
\end{lemma}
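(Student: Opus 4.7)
The plan is to reduce everything to the corresponding matrix-matrix statement via Fact~\ref{fact:equivalent_systems} and then invoke standard Euclidean least-squares theory. The key transport ingredient is the well-known identity $\bcirc(\tU^*) = \bcirc(\tU)^*$ (which follows directly from the definitions of the conjugate transpose of a tensor and of a block-circulant matrix), combined with the fact that $\unfold(\cdot)$ is a linear isometry with respect to the Frobenius norm, i.e., $\|\tW\|_F = \|\unfold(\tW)\|_F$.

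First I would translate each of the three objects in the statement through $\unfold$. Writing $\vy = \unfold(\tY)$ and $M = \bcirc(\tU)$, the constraint $\tU^*\tW = \mathbf{0}$ becomes $M^*\unfold(\tW) = \mathbf{0}$, so the tensor $\tY_{\tR(\tU)^\perp}$ corresponds after unfolding to the closest point in $\ker(M^*)$ to $\vy$, which by standard Euclidean projection theory is $P_{\ker(M^*)}\vy = P_{\tR(M)^\perp}\vy$. Therefore $\unfold(\tY_{\tR(\tU)}) = \vy - P_{\tR(M)^\perp}\vy = P_{\tR(M)}\vy$.

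Next I would handle $\tX^\ddagger$ similarly: since $\|\tU\tX - \tY\|_F = \|M\,\unfold(\tX) - \vy\|_F$ and $\|\tX\|_F = \|\unfold(\tX)\|_F$, the least Frobenius-norm minimizer $\tX^\ddagger$ satisfies $\unfold(\tX^\ddagger) = M^\dagger \vy$. Standard matrix least-squares theory then gives $M\,\unfold(\tX^\ddagger) = MM^\dagger \vy = P_{\tR(M)}\vy$. Combining with the previous paragraph yields $\unfold(\tU\tX^\ddagger) = M\,\unfold(\tX^\ddagger) = P_{\tR(M)}\vy = \unfold(\tY_{\tR(\tU)})$, and applying $\fold$ to both sides gives $\tU\tX^\ddagger = \tY_{\tR(\tU)}$, as desired.

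The main obstacle is really only bookkeeping: one must verify carefully that the unfolding correspondence truly intertwines $\tU^*$ with $M^*$, that it preserves the Frobenius norm of both the residual and the solution, and that the minimal-Frobenius-norm/closest-point characterizations on the tensor side correspond exactly to their counterparts for $M^\dagger\vy$ and the orthogonal projection onto $\ker(M^*)$. None of these steps require new machinery beyond Fact~\ref{fact:equivalent_systems} and the identity $\bcirc(\tU^*)=\bcirc(\tU)^*$, so the proof should be short once the translation is set up.
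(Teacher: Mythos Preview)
Your proposal is correct and follows essentially the same approach as the paper's proof: both reduce the tensor statement to the matricized system via Fact~\ref{fact:equivalent_systems}, using $\bcirc(\tU^*)=\bcirc(\tU)^*$ and the Frobenius isometry of $\unfold$, and then invoke standard matrix least-squares theory. Your version is slightly more explicit about identifying the orthogonal projection $P_{\tR(M)}$ and the pseudoinverse $M^\dagger$, but the underlying argument is the same.
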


\begin{proof}
    We simply appeal here to the usual least-squares theory of linear algebra.  We note that $\tY_{\tR(\tU)^\perp}$ can be equivalently computed by folding the solution of $\bcirc(\tU)^* \unfold(\tW) = \unfold(\mathbf{0})$ that minimizes $\|\unfold(\tY) - \unfold(\tW)\|_F$.  Similarly, $\tX^{\ddagger}$ can be equivalently computed by folding $\mat{X}'$, where $$ \mat{X}' =  \underset{\mat{X} }{\text{argmin}} \ \|\bcirc(\tU)\mat{X} - \unfold(\tY)\|_F^2.$$
    Thus, from usual linear algebraic facts, we have $\unfold(\tY) - \unfold(\tY_{\tR(\tU)^\perp}) = \bcirc(\tU) \unfold(\tX^\ddagger)$.  We finally note that $\unfold(\tY) - \unfold(\tY_{\tR(\tU)^\perp}) = \unfold(\tY - \tY_{\tR(\tU)^\perp}) = \unfold(\tY_{\tR(\tU)})$.  The result then follows from folding.
\end{proof}

Now we turn to Theorem~\ref{thm:tbrek_conv}.  This result illustrates that Algorithm~\ref{alg:TBREK} converges at least linearly in expectation to the least-squares minimizer.

\begin{theorem}
\label{thm:tbrek_conv}[Convergence of TBREK]
Let $\tU \in \R^{m \times m_1 \times p}, \tV \in \R^{m_1 \times n \times p}$, and $\tY \in \R^{m \times l \times p}.$  Let $\alpha_{\tV}$, $\alpha_{\tU},$ $\beta_\tU,$  $\alpha_{\max},$ $\alpha_{\min},$ $\phi_{\min},$ $\theta_{\tV},$ and $\theta_{\tU}$ be as defined in Definition~\ref{def:convergence_constants}.
Let $\tX^\ddagger$ be the unique minimizer of the least-squares objective, $\|\tU \tX - \tY\|_F^2$.   Given the iterates $\{\tX^{(t)}\}$ of TBREK, Algorithm \ref{alg:TBREK}, applied to $\tU$ and $\tY$ satisfy
\begin{equation*}
\mathbb{E}\| \tX^{(t)} - \tX^\ddagger \|_F^2 \le \begin{cases}
    \left(\alpha_\tU^t  + \theta_\tU  \frac{\phi_{\max}^t \phi_{\min}}{1 - \phi_{\min}} \sigma_{\max}^2(\bcirc(\tU))\right)\|\tX^\ddagger\|_F^2 & \text{ if } \alpha_\tU \not= \beta_\tU \\
    \left(\alpha_\tU^t  + \theta_\tU t \phi_{\max}^t\sigma_{\max}^2(\bcirc(\tU))\right) \|\tX^\ddagger\|_F^2 & \text{ else}
\end{cases}
\end{equation*}
\end{theorem}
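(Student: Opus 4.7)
The plan is to decouple TBREK into two linked recursions: a noise-free $\tW$-iteration that drives $\tW^{(t)}$ toward $\tY_{\tR(\tU)^\perp}$, and a $\tX$-iteration that is exactly a noisy TBRK step whose noise is governed by the current discrepancy $\tW^{(t)}-\tY_{\tR(\tU)^\perp}$. Lemma~\ref{lem:usual_least_squares_theory} identifies the pieces: $\tU\tX^\ddagger=\tY_{\tR(\tU)}$, so after setting $\tE^{(t)}:=\tW^{(t)}-\tY_{\tR(\tU)^\perp}$ we have $\tU\tX^\ddagger=\tY-\tW^{(t)}+\tE^{(t)}$. Substituting into the $\tX$-update and expanding, as in the proof of Lemma~\ref{lemma:trk_convergence horizon}, reveals $\tX^{(t)}-\tX^\ddagger=(\tI-\mathcal{P}_{\tU_{\mu_t::}})(\tX^{(t-1)}-\tX^\ddagger)+\tU_{\mu_t::}^{*}(\tU_{\mu_t::}\tU_{\mu_t::}^{*})^{-1}\tE^{(t)}_{\mu_t::}$, which is precisely the situation covered by Lemma~\ref{lemma:trk_convergence horizon} with random noise $\tE^{(t)}$.

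First I would bound the $\tW$-iteration. Observe that the $\tW$-update is a TBRK step applied to the \emph{consistent} homogeneous tensor system $\tU^{*}\tW=\mathbf{0}$ via column sampling; the fixed point that minimizes $\|\tW^{(0)}-\tW\|_F$ is exactly $\tY_{\tR(\tU)^\perp}$. Mirroring the noise-free version of Lemma~\ref{lemma:trk_convergence horizon} (with $\beta_\tU$ playing the role of $\alpha_\tU$) gives the one-step contraction $\mathbb{E}\|\tE^{(t)}\|_F^2\le \beta_\tU\,\mathbb{E}\|\tE^{(t-1)}\|_F^2$, and then Lemma~\ref{lem:residual_error_bound} yields the initial bound $\|\tE^{(0)}\|_F^2=\|\tY_{\tR(\tU)}\|_F^2=\|\tU\tX^\ddagger\|_F^2\le \sigma_{\max}^{2}(\bcirc(\tU))\|\tX^\ddagger\|_F^2$. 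Iterating produces $\mathbb{E}\|\tE^{(t)}\|_F^2\le \beta_\tU^{t}\sigma_{\max}^{2}(\bcirc(\tU))\|\tX^\ddagger\|_F^2$.

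Next I would invoke Lemma~\ref{lemma:trk_convergence horizon} conditionally on the history of $\{l_s\}$ (which makes $\tE^{(t)}$ measurable and independent of $\mu_t$) to obtain
\[
\mathbb{E}\|\tX^{(t)}-\tX^\ddagger\|_F^2 \le \alpha_\tU\,\mathbb{E}\|\tX^{(t-1)}-\tX^\ddagger\|_F^2 + d_{T_\tU}p\,\mathbb{E}\!\left[\sigma_{\min+}^{-2}(\bcirc(\tU_{\mu_t::}))\|\tE^{(t)}_{\mu_t::}\|_F^2\right].
\]
The noise term must be collapsed into $\theta_\tU\,\mathbb{E}\|\tE^{(t)}\|_F^2$; this is where the two branches of Definition~\ref{def:convergence_constants} enter. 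In the probability-weighted branch the $\sigma_{\min+}^{-2}$ factor cancels against the sampling weights and the sum over $\mu$ produces a factor at most $c_{\max}(T_\tU)$ because each row appears in at most that many blocks; in the uniform branch the independence of $\mu_t$ from $\tE^{(t)}$ lets the $\sigma_{\min+}^{-2}$ factor come out as an expectation. Either way one obtains $\mathbb{E}\|\tX^{(t)}-\tX^\ddagger\|_F^2 \le \alpha_\tU\,\mathbb{E}\|\tX^{(t-1)}-\tX^\ddagger\|_F^2 + \theta_\tU\,\sigma_{\max}^{2}(\bcirc(\tU))\,\beta_\tU^{t}\|\tX^\ddagger\|_F^2$.

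Finally I would unroll this scalar recursion from $\tX^{(0)}=\mathbf{0}$ to get $\mathbb{E}\|\tX^{(t)}-\tX^\ddagger\|_F^2 \le \alpha_\tU^{t}\|\tX^\ddagger\|_F^2 + \theta_\tU\,\sigma_{\max}^{2}(\bcirc(\tU))\|\tX^\ddagger\|_F^2\sum_{k=1}^{t}\alpha_\tU^{t-k}\beta_\tU^{k}$, and bound the geometric sum: when $\alpha_\tU\ne\beta_\tU$, factoring out $\phi_{\max}^{t}$ gives $\sum_{k=1}^{t}\alpha_\tU^{t-k}\beta_\tU^{k}\le \phi_{\max}^{t}\phi_{\min}/(1-\phi_{\min})$; when $\alpha_\tU=\beta_\tU$ the sum is simply $t\phi_{\max}^{t}$. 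Substituting yields the two cases of the theorem. The main obstacle is Step 3: cleanly absorbing the random noise into $\theta_\tU$ while respecting both sampling regimes in Definition~\ref{def:convergence_constants} and tracking the $c_{\max}(T_\tU)$-overlap that arises when several blocks share rows; the rest is routine contraction bookkeeping once this is handled.
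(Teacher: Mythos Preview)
Your proposal is correct and follows essentially the same route as the paper: identify the $\tW$-iteration as noise-free TBRK on $\tU^{*}\tW=\mathbf{0}$ converging to $\tY_{\tR(\tU)^\perp}$ at rate $\beta_\tU$, recognize the $\tX$-iteration as a noisy TBRK step with error $\tE^{(t)}=\tW^{(t)}-\tY_{\tR(\tU)^\perp}$, apply Lemma~\ref{lemma:trk_convergence horizon}, split into the two sampling regimes to absorb the noise into $\theta_\tU$, unroll, and bound the resulting geometric sum. The only cosmetic difference is that you apply the $\sigma_{\max}^{2}(\bcirc(\tU))$ bound on $\|\tE^{(0)}\|_F^2$ up front, whereas the paper carries $\|\tY_{\tR(\tU)}\|_F^2$ through the recursion and invokes Lemmas~\ref{lem:usual_least_squares_theory} and~\ref{lem:residual_error_bound} only at the very end.
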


\begin{proof}
Consider the TBREK extended updates
$$\tW^{(t)} = \tW^{(t-1)} - \tU_{:l_t:}(\tU^\ast_{:l_t :}\tU_{:l_t :})^{-1}\tU^\ast_{:l_t:}\tW^{(t-1)} = \left( \tI - \mathcal{P}_{\tU^*_{:l_t:}}\right) \tW^{(t-1)}.$$
These iterates coincide with those of tensor randomized Kaczmarz applied to solve the system
$$ \tU^{*}\tW = \bm{0} \ \textrm{ with }  \ \tW^{(0)} = \tY.$$
By Lemma~\ref{lemma:trk_convergence horizon} and the definition of $\tY_{\tR(\tU)^\perp}$ given in Lemma~\ref{lem:usual_least_squares_theory}, we have
\begin{equation}
\label{eqn:convergence_to_range}\E^{(t-1)} \left[\|\tW^{(t)} - \tY_{\tR(\tU)^\perp} \|^2\right] \leq \beta_\tU \|\tW^{(t-1)} - \tY_{\tR(\tU)^\perp}\|^2
\end{equation}
where $ \beta_\tU = 1 - \sigma_{\min} \left(\E[\text{bcirc}(\mathcal{P}_{\tU^\star_{:l_t:}})]\right) < 1.$

Now, consider the update for $\tX^{(t)}$ in Algorithm \ref{alg:TBREK},
$$\tX^{(t)} = \tX^{(t-1)} - \tU_{\mu_t : :}^*(\tU_{\mu_t : :} \tU_{\mu_t: :}^*)^{-1}(\tU_{\mu_t::}\tX^{(t-1)} - \tY_{\mu_t: :} +\tW^{(t)}_{\mu_t : :})$$
This is equivalent to the TBRK update with system $\tU\tX = \tY -\tW^{(t)} = \tY_{\tR(\tU)} + \tY_{\tR(\tU)^\perp} + \tW^{(t)} $.
Applying Lemma \ref{lemma:trk_convergence horizon} on this system, we have
\begin{align*}
    \E^{(t-1)} \left[\|\tX^{(t)} - \tX^{\ddagger}\|\right]^2 &\le \alpha_\tU \left\|\tX^{(t-1)} - \tX^\ddagger\right\|^2 + d_{T_{\tU}} p \E^{(t-1)} \left[ \sigma_{\min}^{-2}(\bcirc(\tU_{\mu_t::}))\left\| \tW^{(t)}_{\mu_t::} - \left[ \tY_{\tR(\tU)^\perp} \right]_{\mu_t::}\right\|^2\right]
\end{align*}

Now, we consider two cases.  In the first case, we make no assumptions on $\mathcal{D}(T_{\tU})$.  In this case, we have
\begin{align*}
    d_{T_{\tU}} p \E^{(t-1)} \left[ \sigma_{\min}^{-2}(\bcirc(\tU_{\mu_t::}))\left\| \tW^{(t)}_{\mu_t::} - \left[ \tY_{\tR(\tU)^\perp} \right]_{\mu_t::}\right\|^2\right] &\le d_{T_{\tU}} p \E^{(t-1)} \left[ \sigma_{\min}^{-2}(\bcirc(\tU_{\mu_t::}))\left\| \tW^{(t)} -  \tY_{\tR(\tU)^\perp} \right\|^2\right] \\
    &= d_{T_{\tU}} p \E^{(t-1)} \left[ \sigma_{\min}^{-2}(\bcirc(\tU_{\mu_t::}))\right] \E^{(t-1)}\left[\left\| \tW^{(t)} -  \tY_{\tR(\tU)^\perp} \right\|^2\right].
\end{align*}

In the second case, we assume that the probability that block $\mu$ is selected is proportional to $\sigma_{\min}^2(\bcirc(\tU_{\mu::}))$, and have
\begin{align*}
    d_{T_{\tU}} p \E^{(t-1)} \left[ \sigma_{\min}^{-2}(\bcirc(\tU_{\mu_t::}))\left\| \tW^{(t)}_{\mu_t::} - \left[ \tY_{\tR(\tU)^\perp} \right]_{\mu_t::}\right\|^2\right] &\le \frac{d_{T_{\tU}}p c_{\max}(T_\tU)}{\sum_{\mu \in T_\tU} \sigma_{\min +}^2(\bcirc(\tU_{\mu::}))} \E^{(t-1)}\left[\left\| \tW^{(t)} -  \tY_{\tR(\tU)^\perp} \right\|^2\right].
\end{align*}

Thus, combining this with~\eqref{eqn:convergence_to_range}, we have
\begin{align*}
    \E^{(t-1)} \left[\|\tX^{(t)} - \tX^{\ddagger}\|\right]^2 &\le \alpha_\tU \left\|\tX^{(t-1)} - \tX^\ddagger\right\|^2 + \theta_\tU \beta_\tU \left\| \tW^{(t-1)} -  \tY_{\tR(\tU)^\perp} \right\|^2,
\end{align*}
and by recursion, we have
\begin{align*}
    \E \left[\|\tX^{(t)} - \tX^{\ddagger}\|\right]^2 &\le \alpha_\tU^t \left\|\tX^{(0)} - \tX^\ddagger\right\|^2 + \theta_\tU\sum_{s=1}^t  \beta_\tU^s \alpha_\tU^{t-s} \left\| \tW^{(0)} -  \tY_{\tR(\tU)^\perp} \right\|^2 \\
    &= \alpha_\tU^t \left\|\tX^{(0)} - \tX^\ddagger\right\|^2 + \theta_\tU\sum_{s=1}^t  \beta_\tU^s \alpha_\tU^{t-s} \left\|\tY_{\tR(\tU)}\right\|^2.
\end{align*}

Let  $\phi_{\max} := \max\{\alpha_\tU, \beta_\tU\}$ and $\phi_{\min} := \min\left\{ \frac{\alpha_{\tU}}{\beta_{\tU}}, \frac{\beta_{\tU}}{\alpha_{\tU}}\right\}$. When $\alpha_\tU \neq \beta_{\tU}$, by recursion we have
\begin{align*}
    \mathbb{E}\| \tX^{(t)} - \tX^\ddagger \|_F^2 &\le \alpha_\tU^t \|\tX^{(0)} - \tX^\ddagger\|_F^2 + \theta_\tU\sum_{s=1}^t  \beta_\tU^s \alpha_\tU^{t-s} \left\|\tY_{\tR(\tU)}\right\|^2
    \\&\le \alpha_\tU^t \|\tX^\ddagger\|_F^2 + \theta_\tU  \phi_{\max}^t \sum_{s=1}^{t} \phi^s_{\min}\|\tY_{\tR(\tU)}\|_F^2\\
    &\le \alpha_\tU^t \|\tX^\ddagger\|_F^2 + \theta_\tU  \frac{\phi_{\max}^t \phi_{\min}}{1 - \phi_{\min}}\|\tY_{\tR(\tU)}\|_F^2.\end{align*}
When $\alpha_\tU = \beta_\tU$, we have
\begin{align*}
    \mathbb{E}\| \tX^{(t)} - \tX^\ddagger \|_F^2 &\le \alpha_\tU^t \|\tX^{(0)} - \tX^\ddagger\|_F^2 + \theta_\tU\sum_{s=1}^t  \beta_\tU^s \alpha_\tU^{t-s} \left\|\tY_{\tR(\tU)}\right\|^2
    \\&=  \alpha_\tU^t \|\tX^\ddagger\|_F^2 + \theta_\tU t \phi_{\max}^t \left\|\tY_{\tR(\tU)}\right\|^2.
\end{align*}
The result follows now from Lemma~\ref{lem:usual_least_squares_theory} and Lemma~\ref{lem:residual_error_bound}, which yields $\|\tY_{\tR(\tU)}\|_F^2 = \|\tU \tX^\ddagger\|_F^2 \le \sigma_{\max}^2(\bcirc(\tU)) \|\tX^\ddagger\|_F^2$.
\end{proof}

\subsection{FacTBRK and FacTBREK}
\label{subsec:factbrk_and_factbrek}
We now present the proof of Theorem~\ref{theorem:factorized}.  Our final lemma analyzes the single iteration convergence of the iterates of Algorithm~\ref{alg:facTBRK} and Algorithm~\ref{alg:facTBREK} defined on the inner system~\ref{eq:inner_system}, $\tX^{(t)}$, in terms of the expected error of the current values of the iterates defined on the outer system~\ref{eq:outer_system}, $\tZ^{(t)}$.  This lemma allows us to interlace expectation with respect to the sampled block for the inner system, $\nu_t$ and the sampled block for the outer system, $\mu_t$.

\begin{lemma}
With the assumptions of Theorem~\ref{theorem:factorized}, we have
$$\E^{(t-1)}\| \tX^{(t)} - \tX^\ddagger \|^2
  \leq  (1- \sigma_{\min}( {\E[\text{bcirc}( \mathcal{P}_{\tV_{\nu_t::}})]) }) \| {\tX^{(t-1)}} - \tX^\ddagger\|^2 + d_{T_{\tV}}p \E \left[\sigma_{\min +}^{-2}(\bcirc(\tV_{\nu_t : :}))\right] \E_{T_\tU}^{(t-1)}\left[\|( {\tZ^{\ddagger}} - {\tZ^{(t)}}) \|^2\right].$$
  If in particular we have that the sampling distribution $\mathcal{D}(T_{\tV})$ is such that the probability of sampling $\nu_t$ is $\sigma_{\min +}^2(\bcirc(\tV_{\nu_t::}))/\sum_{\nu \in T_{\tV}} \sigma_{\min +}^2(\bcirc(\tV_{\nu::}))$ then
  $$\E^{(t-1)}\| \tX^{(t)} - \tX^\ddagger \|^2
  \leq (1- \sigma_{\min}( {\E[\text{bcirc}( \mathcal{P}_{\tV_{j_t::}})]) }) \| {\tX^{(t-1)}} - \tX^\ddagger\|^2 + \frac{d_{T_{\tV}}p c_{\max}(T_\tV)}{\sum_{\nu \in T_\tV} \sigma_{\min +}^2(\bcirc(\tV_{\nu::}))} \E_{T_\tU}^{(t-1)}\|( {\tZ^{\ddagger}} - {\tZ^{(t)}})\|^2.$$ \label{lem:expectation_decoupling}
\end{lemma}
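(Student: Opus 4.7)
The plan is to recognize that the $\tX^{(t)}$ update in both Algorithm~\ref{alg:facTBRK} and Algorithm~\ref{alg:facTBREK} is exactly a single TBRK step applied to the (consistent by assumption) tensor linear system $\tV \tX = \tZ^\ddagger$, except that the measurement tensor used in practice is $\tZ^{(t)}$ rather than the true target $\tZ^\ddagger$. In other words, one may set $\tE = \tZ^{(t)} - \tZ^\ddagger$ and apply Lemma~\ref{lemma:trk_convergence horizon} with $\tU$ replaced by $\tV$, $\tY$ replaced by $\tZ^\ddagger$, and perturbation $\tE$. This immediately yields the one-step bound
\[
\E^{(t-1)}_{T_\tV}\| \tX^{(t)} - \tX^\ddagger \|_F^2 \le \alpha_\tV \| \tX^{(t-1)} - \tX^\ddagger \|_F^2 + d_{T_\tV} p\, \E^{(t-1)}_{T_\tV}\!\left[\sigma_{\min +}^{-2}(\bcirc(\tV_{\nu_t::})) \bigl\|(\tZ^{(t)} - \tZ^\ddagger)_{\nu_t::}\bigr\|_F^2\right],
\]
where the first term is unaffected by a subsequent $\E^{(t-1)}_{T_\tU}$ because $\|\tX^{(t-1)} - \tX^\ddagger\|_F^2$ is $\mathcal{F}_{t-1}$-measurable.

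Next, I would exploit the independence of $\mu_t$ and $\nu_t$ (which factorize the joint density, as noted in the paragraph preceding Subsection~\ref{sub:fundamental_lemmas}). Since $\tZ^{(t)}$ depends on $\mu_t$ but not on $\nu_t$, and $\sigma_{\min +}^{-2}(\bcirc(\tV_{\nu_t::}))$ depends only on $\nu_t$, in the general case I apply the trivial bound $\|(\tZ^{(t)} - \tZ^\ddagger)_{\nu_t::}\|_F^2 \le \|\tZ^{(t)} - \tZ^\ddagger\|_F^2$ first, then factor the resulting expectation as a product over the two independent samples to obtain the first claimed inequality.

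For the probability-weighted case, I would instead substitute $prob(\nu) = \sigma_{\min+}^2(\bcirc(\tV_{\nu::}))/\sum_{\nu' \in T_\tV} \sigma_{\min+}^2(\bcirc(\tV_{\nu'::}))$ directly into the expectation. The singular value factors cancel, leaving $\frac{1}{\sum_{\nu'} \sigma_{\min+}^2(\bcirc(\tV_{\nu'::}))}\sum_{\nu \in T_\tV} \|(\tZ^{(t)} - \tZ^\ddagger)_{\nu::}\|_F^2$, after which I use the fact that each row-slice of $\tZ^{(t)} - \tZ^\ddagger$ is counted in at most $c_{\max}(T_\tV)$ blocks (Definition~\ref{def:block_sampling_definitions}) to obtain $\sum_{\nu \in T_\tV} \|(\tZ^{(t)} - \tZ^\ddagger)_{\nu::}\|_F^2 \le c_{\max}(T_\tV)\,\|\tZ^{(t)} - \tZ^\ddagger\|_F^2$. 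Taking $\E^{(t-1)}_{T_\tU}$ of the resulting bound produces the second claimed inequality.

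The main obstacle here is bookkeeping rather than a deep technical step: one must carefully distinguish the conditional expectation $\E^{(t-1)}_{T_\tV}$ (over $\nu_t$ alone) from $\E^{(t-1)}_{T_\tU}$ (over $\mu_t$ alone), and correctly identify $\tZ^{(t)}$ as an $\E^{(t-1)}_{T_\tV}$-constant (since $\nu_t$ is drawn only after $\tZ^{(t)}$ is computed in line 7 of Algorithm~\ref{alg:facTBRK} and line 10 of Algorithm~\ref{alg:facTBREK}), so that the independence factorization is valid. Once this is set up, the block-restriction inequality and the $c_{\max}(T_\tV)$ counting argument finish the proof mechanically.
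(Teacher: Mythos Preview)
Your proposal is correct and follows essentially the same approach as the paper's proof: apply Lemma~\ref{lemma:trk_convergence horizon} with $\tE = \tZ^{(t)} - \tZ^\ddagger$, then in the general case use the coarse bound $\|(\tZ^{(t)} - \tZ^\ddagger)_{\nu_t::}\|_F^2 \le \|\tZ^{(t)} - \tZ^\ddagger\|_F^2$ together with the independence of $\nu_t$ and $\tZ^{(t)}$ to factor the expectation, and in the weighted case substitute the probabilities so the singular-value factors cancel and finish with the $c_{\max}(T_\tV)$ counting bound. Your explicit bookkeeping of $\E^{(t-1)}_{T_\tV}$ versus $\E^{(t-1)}_{T_\tU}$ is, if anything, cleaner than the paper's presentation.
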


\begin{proof}
By Lemma~\ref{lemma:trk_convergence horizon}, we have
\begin{align*}
    \E^{(t-1)}&\| \tX^{(t)} - \tX^\ddagger \|^2      \\
  \leq& d_{T_{\tV}}p \E_{T_\tU}^{(t-1)} \E_{T_\tV}^{(t-1)}  \left[\sigma_{\min +}^{-2}(\bcirc(\tV_{\nu_t : :})) \|( {\tZ_{\nu_t : :}^{\ddagger}} - {\tZ^{(t)}_{\nu_t : :}}) \|^2\right]  + (1- \sigma_{\min}( {\E_{T_\tV}^{(t-1)}[\text{bcirc}( \mathcal{P}_{\tV_{\nu_t::}})]) }) \| {\tX^{(t-1)}} - \tX^\ddagger\|^2
\end{align*}

Now, we consider two cases.  In the first case, we make no assumptions on $\mathcal{D}(T_{\tV})$.  In this case, we have
\begin{align*}
    \E^{(t-1)}&\| \tX^{(t)} - \tX^\ddagger \|^2    \\
  \leq&~   d_{T_{\tV}}p \E_{T_\tU}^{(t-1)} \E_{T_\tV}^{(t-1)}  \left[\sigma_{\min +}^{-2}(\bcirc(\tV_{\nu_t : :})) \|( {\tZ_{\nu_t : :}^{\ddagger}} - {\tZ^{(t)}_{\nu_t : :}}) \|^2\right]  + (1- \sigma_{\min}( {\E[\text{bcirc}( \mathcal{P}_{\tV_{\nu_t::}})]) }) \| {\tX^{(t-1)}} - \tX^\ddagger\|^2 \\
  \leq& ~   d_{T_{\tV}}p \E_{T_\tV}^{(t-1)}  \left[\sigma_{\min +}^{-2}(\bcirc(\tV_{\nu_t : :}))\right] \E_{T_\tU}^{(t-1)}\left[\|( {\tZ^{\ddagger}} - {\tZ^{(t)}}) \|^2\right]  + (1- \sigma_{\min}( {\E[\text{bcirc}( \mathcal{P}_{\tV_{\nu_t::}})]) }) \| {\tX^{(t-1)}} - \tX^\ddagger\|^2 \\
  =& ~   d_{T_{\tV}}p \E \left[\sigma_{\min +}^{-2}(\bcirc(\tV_{\nu_t : :}))\right] \E_{T_\tU}^{(t-1)}\left[\|( {\tZ^{\ddagger}} - {\tZ^{(t)}}) \|^2\right]  + (1- \sigma_{\min}( {\E[\text{bcirc}( \mathcal{P}_{\tV_{\nu_t::}})]) }) \| {\tX^{(t-1)}} - \tX^\ddagger\|^2
\end{align*}
where the second inequality follow from the coarse bound $\|( {\tZ_{j_t : :}^{\ddagger}} - {\tZ^{(t)}_{j_t : :}}) \| \le \|( {\tZ^{\ddagger}} - {\tZ^{(t)}}) \|$ and the independence of $\tZ^{(t)}$ and $\nu_t$.

In the second case, we assume that the probability that block $\nu$ is selected is proportional to $\sigma_{\min +}^2(\bcirc(\tV_{\nu::}))$, and have
\begin{align*}
  \E^{(t-1)}&\| \tX^{(t)} - \tX^\ddagger \|^2      \\
  \leq& ~   d_{T_{\tV}}p \E_{T_\tU}^{(t-1)} \E_{T_{\tV}}^{(t-1)}  \left[\sigma_{\min +}^{-2}(\bcirc(\tV_{\nu_t : :})) \|( {\tZ_{\nu_t : :}^{\ddagger}} - {\tZ^{(t)}_{\nu_t : :}}) \|^2\right]  + (1- \sigma_{\min}( {\E[\text{bcirc}( \mathcal{P}_{\tV_{\nu_t::}})]) }) \| {\tX^{(t-1)}} - \tX^\ddagger\|^2 \\
        \leq&  d_{T_{\tV}}p \E_{T_\tU}^{(t-1)}\frac{c_{\max}(T_\tV) \|( {\tZ^{\ddagger}} - {\tZ^{(t)}})\|^2 }{\sum_{\nu \in T_\tV} \sigma_{\min +}^2(\bcirc(\tV_{\nu::}))}  + (1- \sigma_{\min}( {\E[\text{bcirc}( \mathcal{P}_{\tV_{j_t::}})]) }) \| {\tX^{(t-1)}} - \tX^\ddagger\|^2.
\end{align*}
\end{proof}

We may now prove Theorem~\ref{theorem:factorized}.
\begin{proof}[Proof of Theorem~\ref{theorem:factorized}]
We first prove Part~\eqref{thm:facTBRK}.  With \[ \theta_\tV = \begin{cases} \frac{d_{T_{\tV}}p c_{\max}(T_\tV)}{\sum_{\nu \in T_\tV} \sigma_{\min +}^2(\bcirc(\tV_{\nu::}))} & \text{if } p(\nu_t) = \frac{\sigma_{\min +}^2(\bcirc(\tV_{\nu_t::}))}{\sum_{\nu \in T_\tV} \sigma_{\min +}^2(\bcirc(\tV_{\nu::}))}\\ d_{T_{\tV}}p \E \left[\sigma_{\min +}^{-2}(\bcirc(\tV_{\nu_t : :}))\right] & \text{else} \end{cases},
\]
we have by Lemma~\ref{lem:expectation_decoupling} that
\[\E^{(t-1)}\| \tX^{(t)} - \tX^\ddagger \|^2
  \leq  \alpha_\tV \| {\tX^{(t-1)}} - \tX^\ddagger\|^2 + \theta_\tV \E_{T_\tU}^{(t-1)}\left[\|( {\tZ^{\ddagger}} - {\tZ^{(t)}}) \|^2\right]\]
Note that the iterations updating the $\tZ$ iterates are independent of the $\tX$ iterates. This gives us
\[\mathbb{E}^{t-1}\| \tX^{(t)} - \tX^\ddagger \|^2 \leq \alpha_\tV\|\tX^{(t-1)} - \tX^\ddagger \|^2 + \theta_{\tV} \alpha_\tU {\|\tZ^{(t-1)} - \tZ^\ddagger\|^2}\]

Let  $\alpha_{\max} := \max\{\alpha_\tU, \alpha_\tV\}$ and $\alpha_{\min} := \min\left\{ \frac{\alpha_{\tU}}{\alpha_{\tV}}, \frac{\alpha_{\tV}}{\alpha_{\tU}}\right\}$. When $0 \neq \alpha_\tU \neq \alpha_{\tV} \neq 0$, by recursion we have
\begin{align*}
    \mathbb{E}\| \tX^{(t)} - \tX^\ddagger \|_F^2 &\le \alpha_\tV^t \|\tX^{(0)} - \tX^\ddagger\|_F^2 + \theta_\tV \left(\sum_{s = 1}^t \alpha_\tU^s \alpha_\tV^{t-s}\right) \|\tZ^{(0)} - \tZ^\ddagger\|_F^2
    \\&\le \alpha_\tV^t \|\tX^\ddagger\|_F^2 + \theta_\tV  \alpha_{\max}^t \sum_{s=1}^{t} \alpha^s_{\min}\|\tZ^\ddagger\|_F^2\\
    &\le \alpha_\tV^t \|\tX^\ddagger\|_F^2 + \theta_\tV  \frac{\alpha_{\max}^t \alpha_{\min}}{1 - \alpha_{\min}}\|\tZ^\ddagger\|_F^2.\end{align*}
When $\alpha_\tU = \alpha_\tV$, we have
\begin{align*}
    \mathbb{E}\| \tX^{(t)} - \tX^\ddagger \|_F^2 &\le \alpha_\tV^t \|\tX^{(0)} - \tX^\ddagger\|_F^2 + \theta_\tV \left(\sum_{s = 1}^t \alpha_\tU^s \alpha_\tV^{t-s}\right) \|\tZ^{(0)} - \tZ^\ddagger\|_F^2
    \\&\le \alpha_\tV^t \|\tX^\ddagger\|_F^2 + \theta_\tV t \alpha_{\max}^t \|\tZ^\ddagger\|_F^2.
\end{align*}
The result then follows from Lemma~\ref{lem:residual_error_bound} and the fact that $\tV \tX^\ddagger = \tZ^\ddagger$.

 We now prove part~\eqref{thm:facTBREK}.  From Lemma~\ref{lem:expectation_decoupling}, we know that
\begin{align*}  \E^{(t-1)}\| \tX^{(t)} - \tX^\ddagger \|^2
  \leq&    \alpha_\tV \| {\tX^{(t-1)}} - \tX^\ddagger\|^2  + \theta_\tV \E^{(t-1)}\left[\|( {\tZ^{\ddagger}} - {\tZ^{(t)}}) \|^2\right],
\end{align*}
and thus, we have
\begin{align*}  \E\| \tX^{(t)} - \tX^\ddagger \|^2
  \leq&    \alpha_\tV \E\| {\tX^{(t-1)}} - \tX^\ddagger\|^2  + \theta_\tV \E\left[\|( {\tZ^{\ddagger}} - {\tZ^{(t)}}) \|^2\right]\\
  \leq&    \alpha_\tV \E\| {\tX^{(t-1)}} - \tX^\ddagger\|^2  + \theta_\tV \gamma(t) \|{\tZ^{\ddagger}} \|_F^2\\
  \leq&    \alpha_\tV \E\| {\tX^{(t-1)}} - \tX^\ddagger\|^2  + \theta_\tV \gamma(t) \sigma_{\max}^2(\bcirc(\tV))\|{\tX^{\ddagger}} \|_F^2,
\end{align*}
where the last inequality follows from Lemma~\ref{lem:residual_error_bound} and the second inequality follows from Theorem~\ref{thm:tbrek_conv} with
$$\gamma(t) := \begin{cases}
    \alpha_\tU^t  + \theta_\tU  \frac{\phi_{\max}^t \phi_{\min}}{1 - \phi_{\min}} \sigma_{\max}^2(\bcirc(\tU)) & \text{ if } \alpha_\tU \not= \beta_\tU \\
     \alpha_\tU^t  + \theta_\tU t \phi_{\max}^t\sigma_{\max}^2(\bcirc(\tU)) & \text{ else}.
\end{cases}.$$
Recursively then, we have
\begin{align*}  \E\| \tX^{(t)} &- \tX^\ddagger \|^2
  \leq    \left(\alpha_\tV^t + \theta_\tV \sigma_{\max}^2(\bcirc(\tV)) \sum_{t=0}^t \alpha_\tV^s \gamma(t-s)\right) \|\tX^\ddagger\|_F^2\\
  \le& \left(\alpha_\tV^t + \theta_\tV \sigma_{\max}^2(\bcirc(\tV)) (t+1) \alpha_\tV^{\lfloor t/2 \rfloor} \gamma(\lfloor t/2 \rfloor)\right) \|\tX^\ddagger\|_F^2\\
  \leq& \begin{cases}
      \left(\alpha_\tV^t + \theta_\tV \sigma_{\max}^2(\bcirc(\tV)) (t+1) \alpha_\tV^{\lfloor t/2 \rfloor}\left(\alpha_\tU^{\lfloor t/2 \rfloor}  + \theta_\tU  \frac{\phi_{\max}^{\lfloor t/2 \rfloor} \phi_{\min}}{1 - \phi_{\min}} \sigma_{\max}^2(\bcirc(\tU))\right)\right)\|\tX^\ddagger\|_F^2 & \text{ if } 0 \neq \alpha_\tU \not= \beta_\tU \neq 0 \\
     \left(\alpha_\tV^t + \theta_\tV \sigma_{\max}^2(\bcirc(\tV)) (t+1) \alpha_\tV^{\lfloor t/2 \rfloor}\left(\alpha_\tU^{\lfloor t/2 \rfloor}  + \theta_\tU t \phi_{\max}^{\lfloor t/2 \rfloor}\sigma_{\max}^2(\bcirc(\tU))\right)\right)\|\tX^\ddagger\|_F^2 & \text{ else}.
  \end{cases}
\end{align*}
\end{proof}

\section{Numerical Experiments}\label{sec:numerical_experiments}
In this section, we provide numerical experiments which illustrate our theoretical convergence results for FacTBRK and FacTBREK, and explore the behavior of these methods for scenarios where our theoretical results do not hold.  We additionally explore the effect of different types of block sampling, and compare these methods to those applied to the equivalent matricized systems described in Fact~\ref{fact:equivalent_systems}.  Finally, we explore the application of these methods to video and image deblurring.

\subsection{Synthetic Data}
\label{sec:Synthetic_Data}
In these experiments, we generate synthetic consistent and inconsistent tensor linear systems of various sizes.  In each experiment described below, we generate the tensor linear system according to the process described here to ensure that systems of appropriate sizes will satisfy the assumptions of Theorem~\ref{theorem:factorized}.

\textbf{Consistent system.}  We generate tensors $\tU \in \mathbb{R}^{m \times m_1 \times p}$ and $\tV \in \mathbb{R}^{m_1 \times n \times p}$ with each entry sampled i.i.d.\ from the standard normal distribution.  We generate a solution to our consistent system $\tX_{\text{gen}} \in \mathbb{R}^{n \times l \times p}$ with each entry sampled i.i.d.\ from the standard normal distribution and the measurement tensor for our consistent system $\tY \in \mathbb{R}^{m \times l \times p}$ as $\tY = \tU \tV \tX_{\text{gen}}$.  Our theoretical convergence result, Theorem~\ref{theorem:factorized} tells us that when the outer systems $\tU \tZ = \tY$ has a unique solution $\tZ^\ddagger$ and the inner system $\tV \tX = \tZ^\ddagger$ is consistent, then FacTBRK will converge to the solution of minimal norm, $\tX^\ddagger = \tV^\dagger \tZ^\ddagger$.  Our numerical experiments corroborate this result.

\textbf{Inconsistent system.} We generate tensors $\tU \in \mathbb{R}^{m \times m_1 \times p}$ and $\tV \in \mathbb{R}^{m_1 \times n \times p}$ with each entry sampled i.i.d.\ from the standard normal distribution.  We generate our inconsistent system by first generating $\tX_{\text{gen}} \in \mathbb{R}^{n \times l \times p}$ with each entry sampled i.i.d.\ from the standard normal distribution.  We then generate $\tY = \tU \tV \tX_{\text{gen}} + 10^{-4}\tY^\perp$ where we generate $\tY^\perp$ to satisfy $\langle \tY^{\perp}, \tU \tV \tX_{\text{gen}} \rangle = 0$. To do this, we generate $\tilde{\tY} \in \mathbb{R}^{m \times l \times p}$ with each entry sampled i.i.d.\ from the standard normal distribution, $\tilde{\tX} = \tU^\dagger \tilde{\tY}$, and $\tY^\perp = \tilde{\tY} - \tU \tilde{\tX}.$  Our theoretical convergence result, Theorem~\ref{theorem:factorized} tells us that when the outer least-squares problem  $\|\tU \tZ - \tY\|_F^2$ has a unique minimizer $\tZ^\ddagger$ and the inner system $\tV \tX = \tZ^\ddagger$ is consistent, then FacTBREK will converge to the system of minimal norm, $\tX^\ddagger = \tV^\dagger \tZ^\ddagger$.  Our numerical experiments support this result.

\subsubsection{FacTBRK vs FacTBREK}
In these experiments, we compare the behavior of FacTBRK and FacTBREK to one another on consistent and inconsistent systems.  In every experiment in this section, we generate $\tU \in \mathbb{R}^{40 \times 10 \times 7}, \tV \in \mathbb{R}^{10 \times 5 \times 7}$, and $\tX_{\text{gen}} \in \mathbb{R}^{5 \times 5 \times 7}$.  We generate consistent and inconsistent systems according to the process described above.

\begin{figure}
    \includegraphics[width=0.495\textwidth]{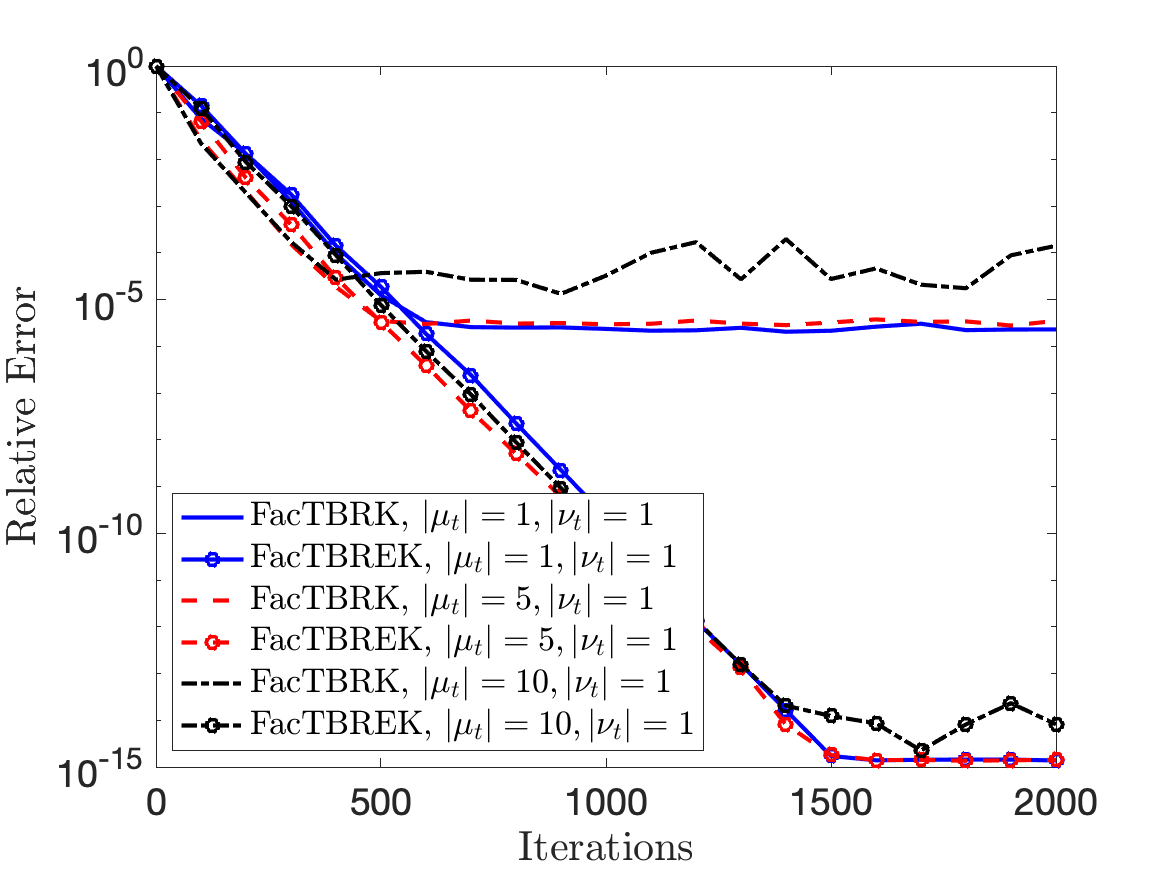}\hfill%
    \includegraphics[width=0.495\textwidth]{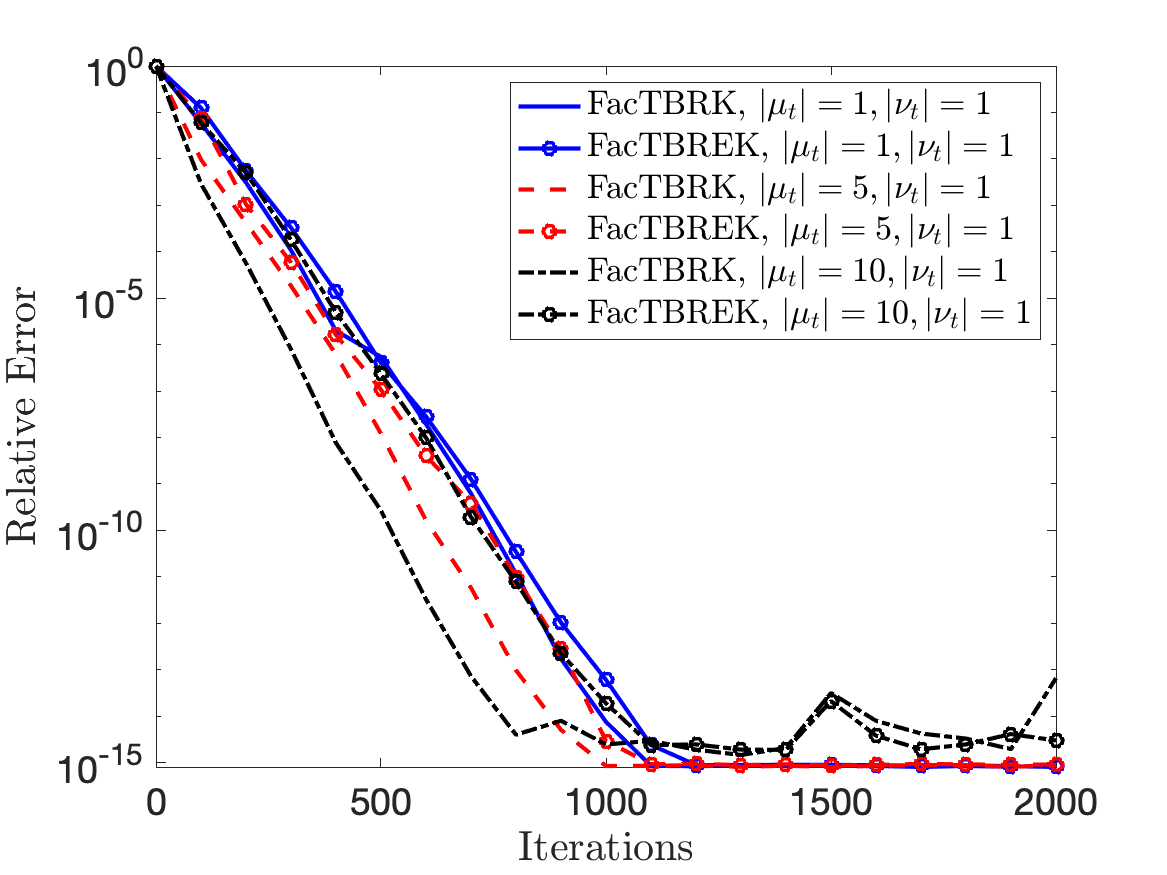}
    \caption{Relative error $\|\tX^{(t)} - \tX^\ddagger\|_F/\|\tX^\ddagger\|_F$ vs iteration $t$ of FacTBRK and FacTBREK with outer block sizes $|\mu_t| \in \{1, 5, 10\}$ and inner system block size $|\nu_t| = 1$ on inconsistent tensor linear system (left) and consistent tensor linear system (right). }\label{fig:facTBRKandfacTBREK_samesystem}
\end{figure}

In our first experiment, we generate an inconsistent system according to the process described above and run FacTBRK (Algorithm~\ref{alg:facTBRK}) and FacTBREK (Algorithm~\ref{alg:facTBREK}) with outer system block sizes $|\mu_t| \in \{1, 5, 10\}$ and inner system block size $|\nu_t| = 1$.  Here, the blocks are uniformly sampled from all possible blocks of this size, $T_{\tU} = \{\mu \in \mathcal{P}([m]) : |\mu| = |\mu_t|\}$ and $T_{\tV} = \{\nu \in \mathcal{P}([m_1]) : |\nu| = |\nu_t|\}$.  The results of this experiment are plotted on the left of Figure~\ref{fig:facTBRKandfacTBREK_samesystem}.  We note that in this experiment, as expected, FacTBRK does not converge on this inconsistent system, but FacTBREK does due to its additional projection step.  Additionally, as expected, larger outer block sizes converge faster, but to a more variable final error due to the higher variability of this outer projection step.

In our second experiment, we generate a consistent system according to the process described above and run FacTBRK (Algorithm~\ref{alg:facTBRK}) and FacTBREK (Algorithm~\ref{alg:facTBREK}) with outer system block sizes $|\mu_t| \in \{1, 5, 10\}$ and inner system block size $|\nu_t| = 1$.  Here, the blocks are uniformly sampled from all possible blocks of this size, $T_{\tU} = \{\mu \in \mathcal{P}([m]) : |\mu| = |\mu_t|\}$ and $T_{\tV} = \{\nu \in \mathcal{P}([m_1]) : |\nu| = |\nu_t|\}$.  The results of this experiment are plotted on the right of Figure~\ref{fig:facTBRKandfacTBREK_samesystem}. We note that in this experiment, as expected, both FacTBRK and FacTBREK converge on this consistent system.  FacTBREK does so more slowly than FacTBRK due to its extra projection step which iteratively learns components of $\tY$ that lie in the range of $\tU$. Again, as expected, larger outer block sizes converge faster, but to a more variable final error due to the higher variability of this outer projection step.

\begin{figure}
    \includegraphics[width=0.495\textwidth]{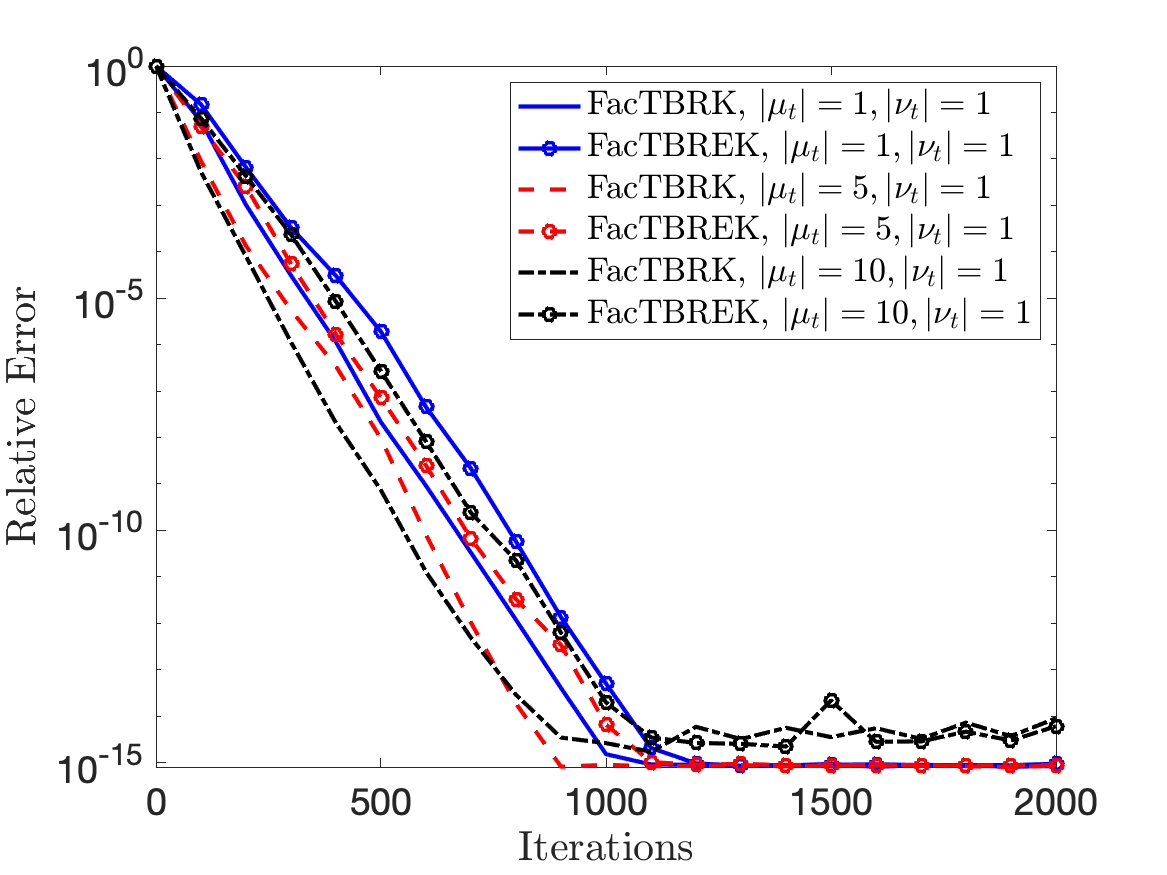}\hfill%
    \includegraphics[width=0.495\textwidth]{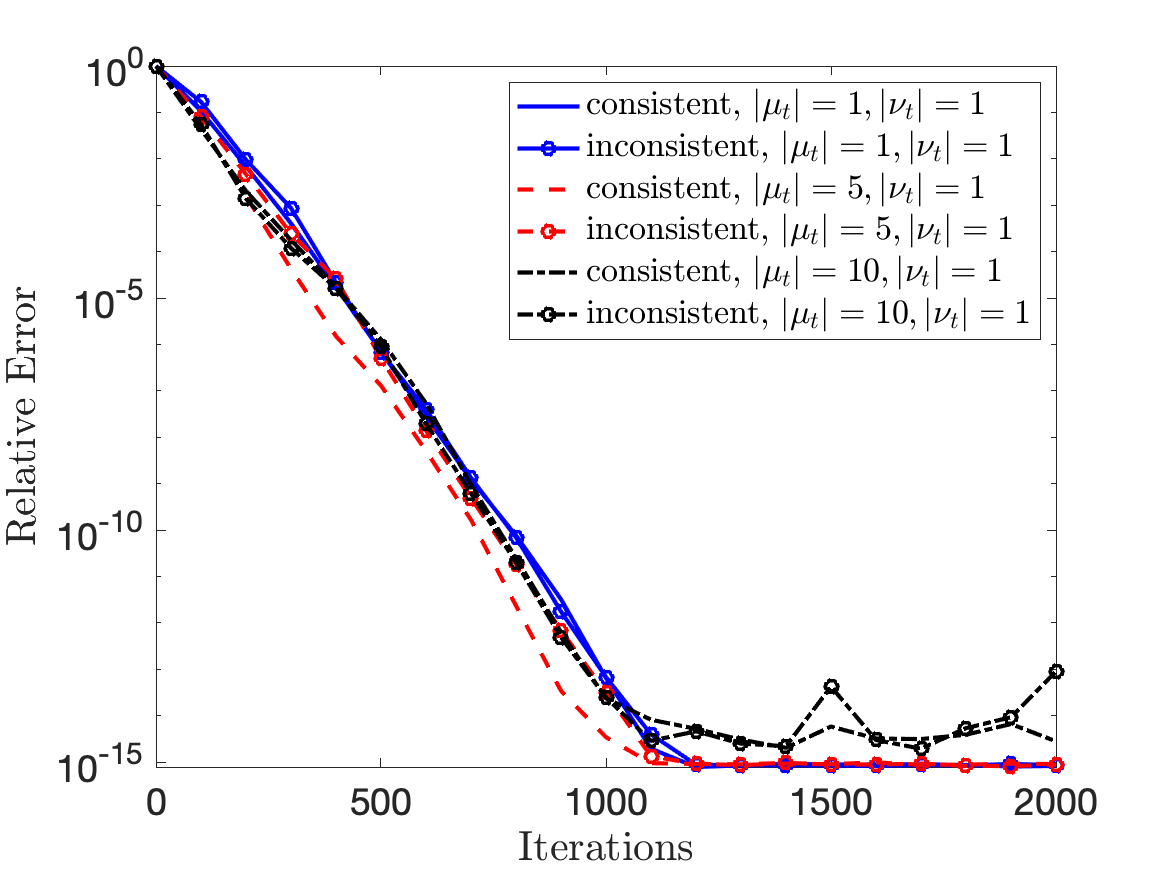}
    \caption{(Left) Relative error $\|\tX^{(t)} - \tX^\ddagger\|_F/\|\tX^\ddagger\|_F$ vs iteration $t$ of FacTBRK on consistent linear system and FacTBREK on inconsistent linear system with outer block sizes $|\mu_t| \in \{1, 5, 10\}$ and inner system block size $|\nu_t| = 1$. (Right) Relative error $\|\tX^{(t)} - \tX^\ddagger\|_F/\|\tX^\ddagger\|_F$ vs iteration $t$ of FacTBREK on consistent and inconsistent linear systems with outer block sizes $|\mu_t| \in \{1, 5, 10\}$ and inner system block size $|\nu_t| = 1$.}\label{fig:facTBRKandfacTBREK_differentsystems}
\end{figure}

In our third experiment, we generate a consistent system according to the process described above and then additionally create a comparable inconsistent system by taking the additional steps of the process described above.  That is, we generate $\tX_{\text{gen}}$ and for the consistent system, build $\tY = \tU \tV \tX_{\text{gen}}$, and for the inconsistent system build $\tY = \tU \tV \tX_{\text{gen}} + 10^{-4} \tY^\perp$.   We run FacTBRK (Algorithm~\ref{alg:facTBRK}) on the consistent system and FacTBREK (Algorithm~\ref{alg:facTBREK}) on the inconsistent system with outer system block sizes $|\mu_t| \in \{1, 5, 10\}$ and inner system block size $|\nu_t| = 1$.  Here, the blocks are uniformly sampled from all possible blocks of this size, $T_{\tU} = \{\mu \in \mathcal{P}([m]) : |\mu| = |\mu_t|\}$ and $T_{\tV} = \{\nu \in \mathcal{P}([m_1]) : |\nu| = |\nu_t|\}$.  The results of this experiment are plotted on the left of Figure~\ref{fig:facTBRKandfacTBREK_differentsystems}.  Due to the high similarity between the consistent and inconsistent systems, we note that the behavior of FacTBRK and FacTBREK are similar to that when run on the same consistent system as in the right plot of Figure~\ref{fig:facTBRKandfacTBREK_samesystem}.  The difference in their convergence is likely mainly due to the extra projection step of FacTBREK which causes slower convergence as the method learns the components of $\tY$ in the range of $\tU$.  Again, as expected, larger outer block sizes converge faster, but to a more variable final error due to the higher variability of this outer projection step.

In our fourth experiment, we generate a consistent system according to the process described above and then additionally create a comparable inconsistent system by taking the additional steps of the process described above.  That is, we generate $\tX_{\text{gen}}$ and for the consistent system, build $\tY = \tU \tV \tX_{\text{gen}}$, and for the inconsistent system build $\tY = \tU \tV \tX_{\text{gen}} + 10^{-4} \tY^\perp$.   We run FacTBREK (Algorithm~\ref{alg:facTBREK}) on both systems with outer system block sizes $|\mu_t| \in \{1, 5, 10\}$ and inner system block size $|\nu_t| = 1$.  Here, the blocks are uniformly sampled from all possible blocks of this size, $T_{\tU} = \{\mu \in \mathcal{P}([m]) : |\mu| = |\mu_t|\}$ and $T_{\tV} = \{\nu \in \mathcal{P}([m_1]) : |\nu| = |\nu_t|\}$.  The results of this experiment are plotted on the right of Figure~\ref{fig:facTBRKandfacTBREK_differentsystems}.  We note that FacTBREK behaves quite similarly on both systems as it is learning (up to randomness due to sampling) the same components of $\tY$ that live in the range of $\tU$.

\subsubsection{Effect of different systems and different block sizes}

In the following examples, we examine the performance of FacTBRK (Algorithm~\ref{alg:facTBRK}) and FacTBREK (Algorithm~\ref{alg:facTBREK}) under two different considerations: (i) whether $\tA$ is under-determined or over-determined, and (ii) the block sizes of the inner and outer systems. To analyze the systems in both the under-determined or over-determined regimes, we also consider the cases when the subsystems containing $\tU$ and $\tV$ are various combinations of under-determined and over-determined, as listed in Table: \ref{tab:facTBRKcases}. Note that some cells in the table are left empty and color-coded black because no combination of the tensors $\tU$ and $\tV$ with the indicated property in the first row can produce a tensor $\tA$ with the desired property in the first column. Also, note that the shaded gray cells indicate cases where our theory shows that convergence is not guaranteed.

\begin{table}[h!]
\caption{Cases of the numerical experiments of FacTBRK (Algorithm~\ref{alg:facTBRK}) and FacTBREK (Algorithm~\ref{alg:facTBREK})algorithms on various sizes of tensors $\tA$, $\tU$ and $\tV$. The dimensions of the $\tens{X}$ were $20\times 10\times 30$, the same for all the cases.  Cases for which Theorem~\ref{theorem:factorized} does not hold are grayed out and these experiments are included in Appendix~\ref{sec:appendix}.  Cases for which the tensors cannot be formed are blacked out.} \label{tab:facTBRKcases}
\small
    \centering
   \begin{tabular}{|c|c|c|c|c|}\hline
   Cases  &\begin{tabular}{@{}c@{}} $\tU$ over-determined \\ $\tV$ under-determined \end{tabular} & \begin{tabular}{@{}c@{}} $\tU$ over-determined \\ $\tV$ over-determined \end{tabular} & \cellcolor{black!25}\begin{tabular}{@{}c@{}} $\tU$ under-determined \\ $\tV$ over-determined \end{tabular} & \cellcolor{black!25}\begin{tabular}{@{}c@{}} $\tU$ under-determined \\ $\tV$ under-determined \end{tabular} \\\hline

1. \begin{tabular}{@{}c@{}} $\tA \in \mathbb{R}^{10\times 20 \times 30}$ \\ under-determined \end{tabular}&
\begin{tabular}{@{}c@{}}$r=5$\\$|\nu_{t}|=[1,3,5]$\\ $|\mu_{t}|=[1,3,5]$ \\ FacTBRK: Figure~\ref{fig:facTBRK_differentsystems_case1_1}\\
FacTBREK: Figure~\ref{fig:facTBREK_differentsystems_case1_1} \end{tabular} & \cellcolor{black}- &
\cellcolor{black!10}\begin{tabular}{@{}c@{}}$r=25$\\$|\nu_{t}|=[1,5,10]$\\ $|\mu_{t}|=[1,5,10]$ \\ FacTBRK: Figure~\ref{fig:facTBRK_differentsystems_case1_2}\\
FacTBREK: Figure~\ref{fig:facTBREK_differentsystems_case1_2}  \end{tabular} &
\cellcolor{black!10}\begin{tabular}{@{}c@{}}$r=15$\\ $|\nu_{t}|=[1,5,15]$\\ $|\mu_{t}|=[1,5,10]$ \\ FacTBRK: Figure~\ref{fig:facTBRK_differentsystems_case1_3}\\
FacTBREK: Figure~\ref{fig:facTBREK_differentsystems_case1_3}
\end{tabular}
\\\hline

2. \begin{tabular}{@{}c@{}} $\tA \in \mathbb{R}^{30\times 20 \times 30}$ \\ over-determined \end{tabular}&
\begin{tabular}{@{}c@{}}$r=15$\\ $|\nu_{t}|=[1,5,10,15]$\\ $|\mu_{t}|=[1,5,10,15]$ \\ FacTBRK: Figure~\ref{fig:facTBRK_differentsystems_case2_1} \\
FacTBREK: Figure~\ref{fig:facTBREK_differentsystems_case2_1} \end{tabular} & \begin{tabular}{@{}c@{}}$r=25$\\$|\nu_{t}|=[1,5,10,20]$\\ $|\mu_{t}|=[1,5,10,20,25]$ \\ FacTBRK: Figure~\ref{fig:facTBRK_differentsystems_case2_4}\\
FacTBREK: Figure~\ref{fig:facTBREK_differentsystems_case2_4} \end{tabular} &
\cellcolor{black!10}\begin{tabular}{@{}c@{}}$r=35$\\$|\nu_{t}|=[1,10,20]$\\ $|\mu_{t}|=[1,10,20]$ \\ FacTBRK: Figure~\ref{fig:facTBRK_differentsystems_case2_2}\\
FacTBREK: Figure~\ref{fig:facTBREK_differentsystems_case2_2} \end{tabular}
& \cellcolor{black}-
\\\hline

\end{tabular}
\end{table}

Different system sizes were chosen to illustrate the convergence results in the six possible scenarios described in Table: \ref{tab:facTBRKcases}. The convergence of FacTBRK (Algorithm~\ref{alg:facTBRK}) and FacTBREK (Algorithm~\ref{alg:facTBREK})algorithms is studied in terms of relative least norm error, and the results are generated according to the process described above in section \ref{sec:Synthetic_Data}.

\begin{figure}[ht]
    \includegraphics[width=0.495\textwidth]{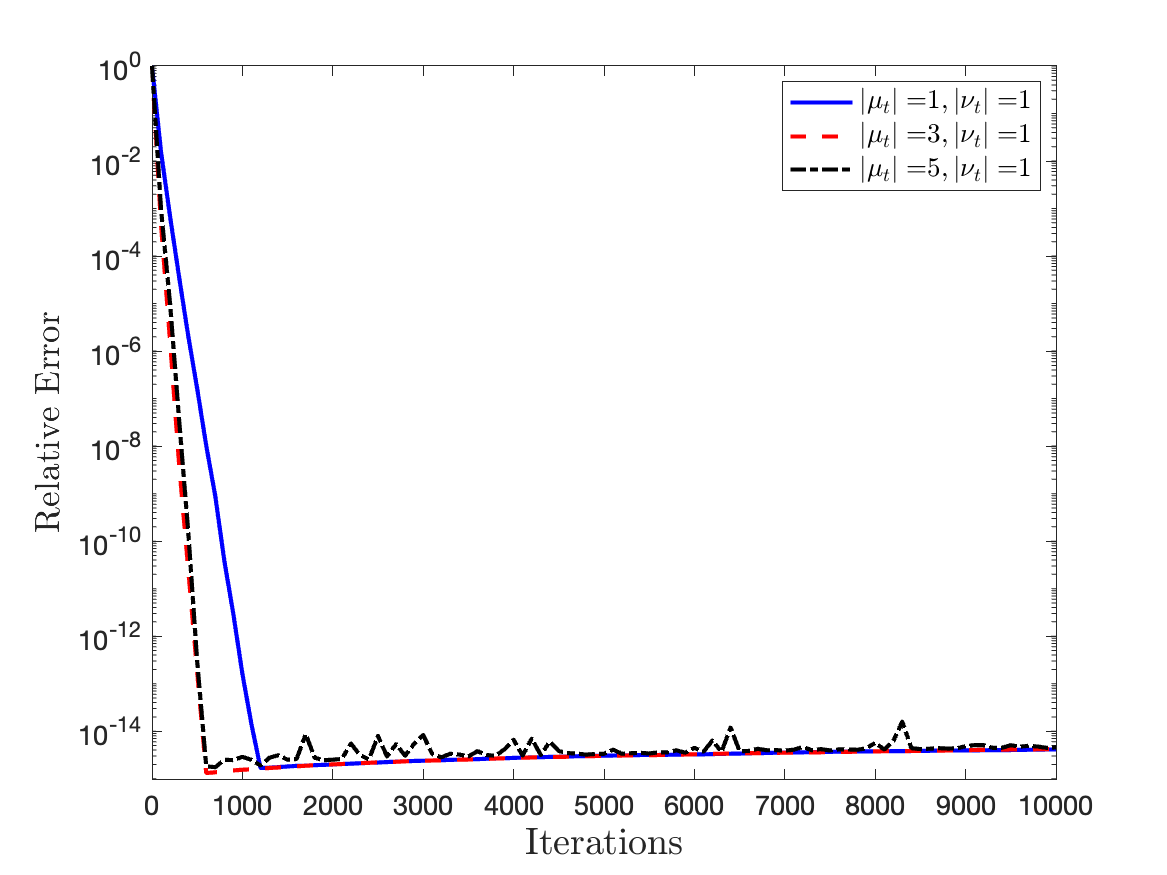}\hfill%
    \includegraphics[width=0.495\textwidth]{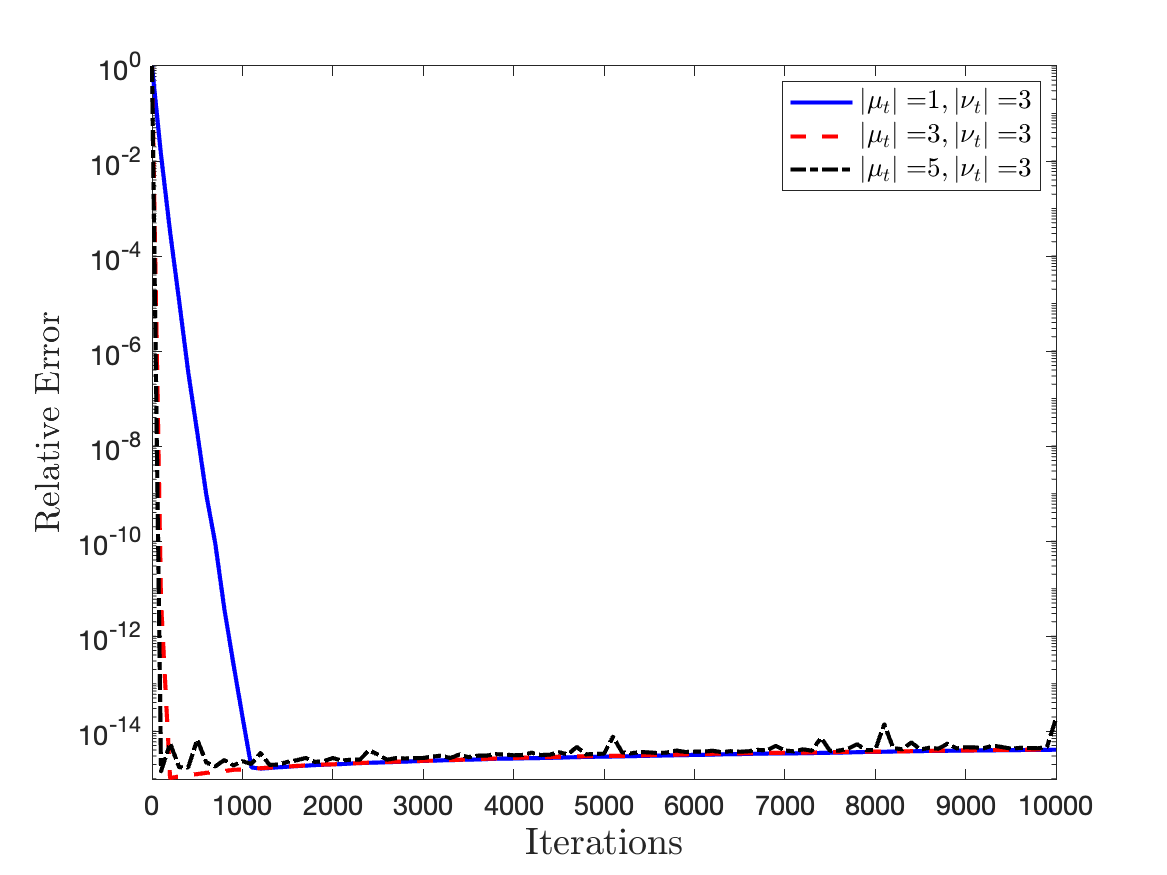}\hfill%
    \includegraphics[width=0.495\textwidth]{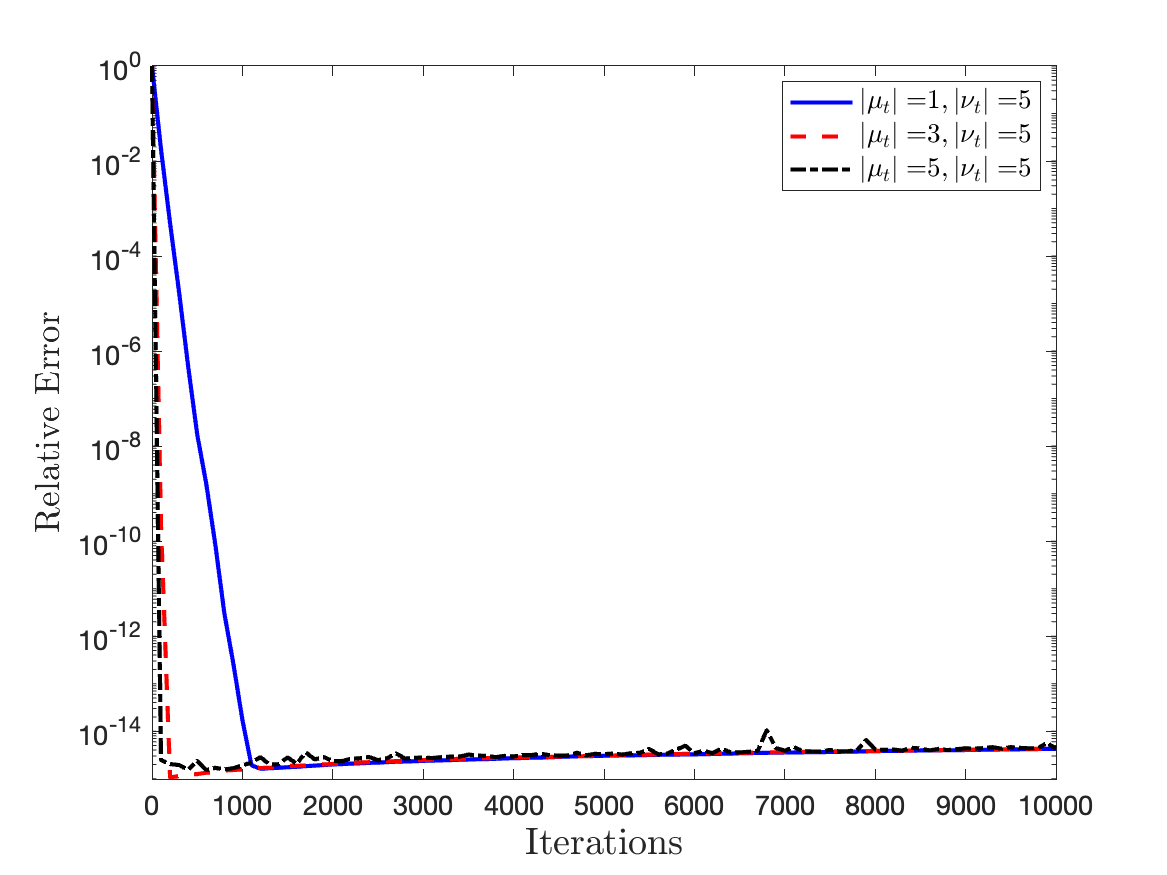}%

    \caption{Relative error $\|\tX^{(t)} - \tX^\ddagger\|_F/\|\tX^\ddagger\|_F$ vs iteration $t$ of FacTBRK on consistent linear system when $\tA$ is under-determined, $\tU$ is over-determined and $\tV$ is under-determined. We consider outer block sizes $|\mu_t| \in \{1, 3, 5\}$ and inner system block size (Upper Left) $|\nu_t| = 1$, (Upper Right) $|\nu_t| = 3$, (Lower) $|\nu_t| = 5$. }\label{fig:facTBRK_differentsystems_case1_1}
\end{figure}

In Figures (\ref{fig:facTBRK_differentsystems_case1_1}, \ref{fig:facTBRK_differentsystems_case2_1} and \ref{fig:facTBRK_differentsystems_case2_4} for FacTBRK, and \ref{fig:facTBREK_differentsystems_case1_1}, \ref{fig:facTBREK_differentsystems_case2_1} and \ref{fig:facTBREK_differentsystems_case2_4} for FacTBREK), we consider the cases when the subsystem with $\tU$ is over-determined, and the subsystem with $\tV$ is either under-determined or over-determined which makes system $\tA$ either under-determined or over-determined as well.
For any given combination of block sizes, both FacTBRK and FacTBREK converge rapidly to the least norm solution in all of the cases. These results are supported by our convergence theory. However, it is notable that, in each case, the convergence speed increases as the outer system block size increases, regardless of the inner block size, and also as the ratio of the inner block size to the outer block size increases the speed of convergence decreases. Also, it should be noted that if the size of the inner blocks is close to the total number of rows of $\tV$ the convergence curves tend to vary more. This happens because the solution of the inner system is too precise, which creates more uncertainty for the outer system.

Notice that when the tensors $\tX,\tU$, and $\tV$ do not meet the assumptions of our main result, our algorithm diverges, as illustrated in Figures \ref{fig:facTBRK_differentsystems_case1_2}, \ref{fig:facTBRK_differentsystems_case1_3},
\ref{fig:facTBRK_differentsystems_case2_2},
\ref{fig:facTBREK_differentsystems_case1_2},
\ref{fig:facTBREK_differentsystems_case1_3},
\ref{fig:facTBREK_differentsystems_case2_2} in Appendix \ref{sec:appendix}. This highlights the scope of our convergence result, as empirical observations suggest divergence in cases beyond the assumptions of our theorem.

\begin{figure}
    \includegraphics[width=0.495\textwidth]{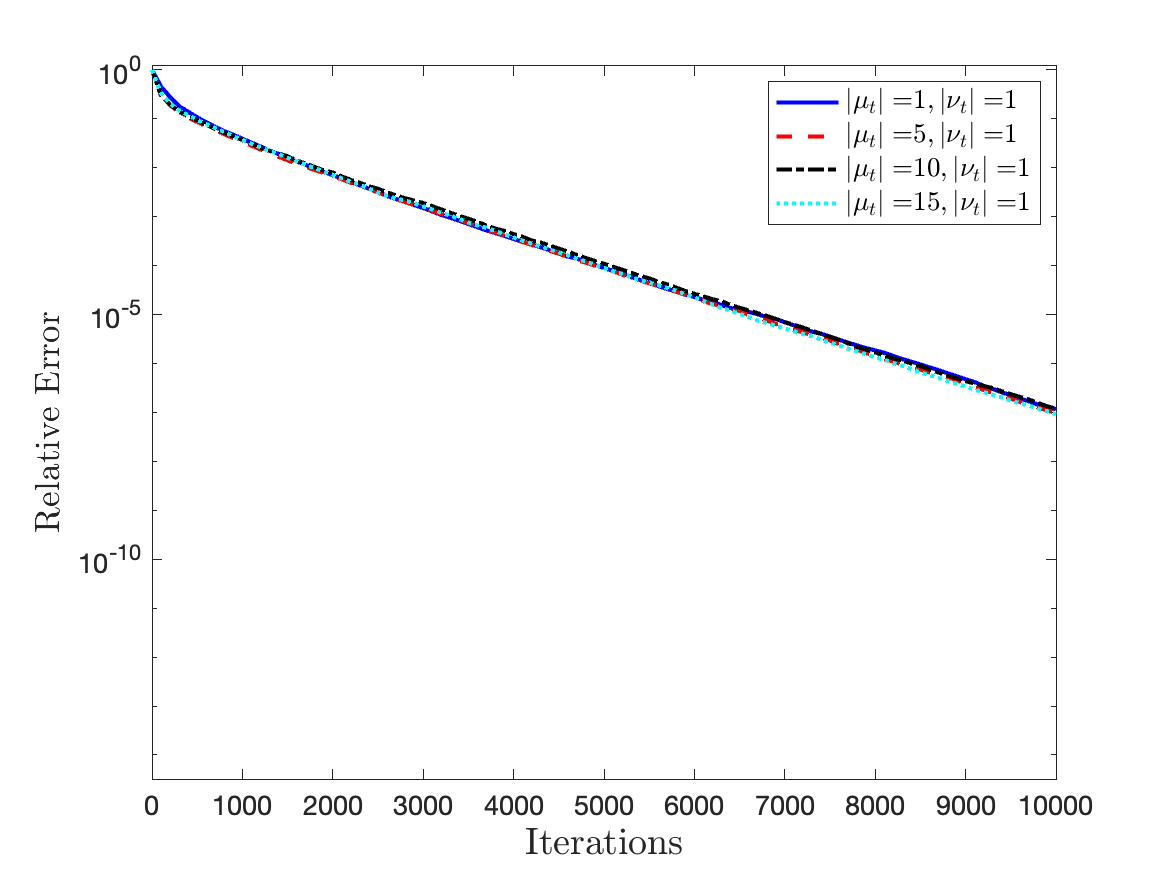}\hfill%
    \includegraphics[width=0.495\textwidth]{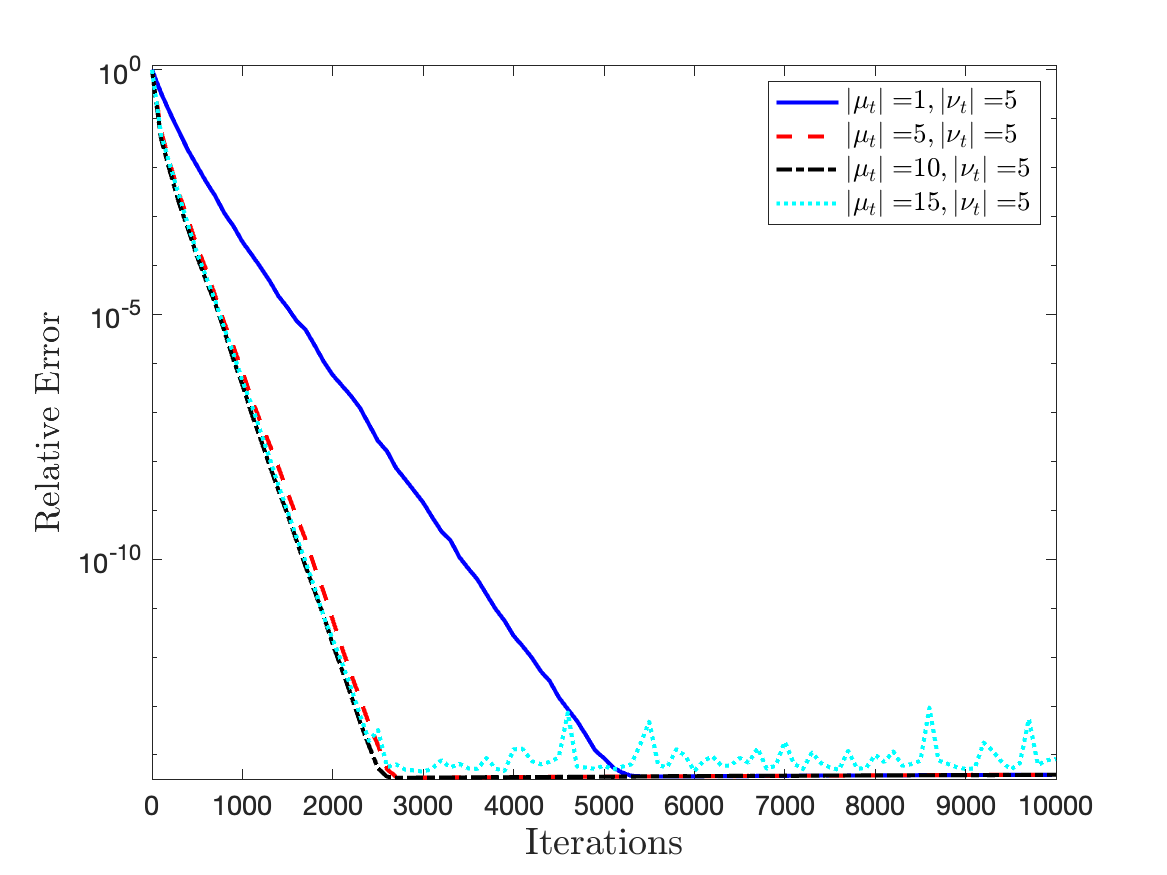}\hfill%
    \includegraphics[width=0.495\textwidth]{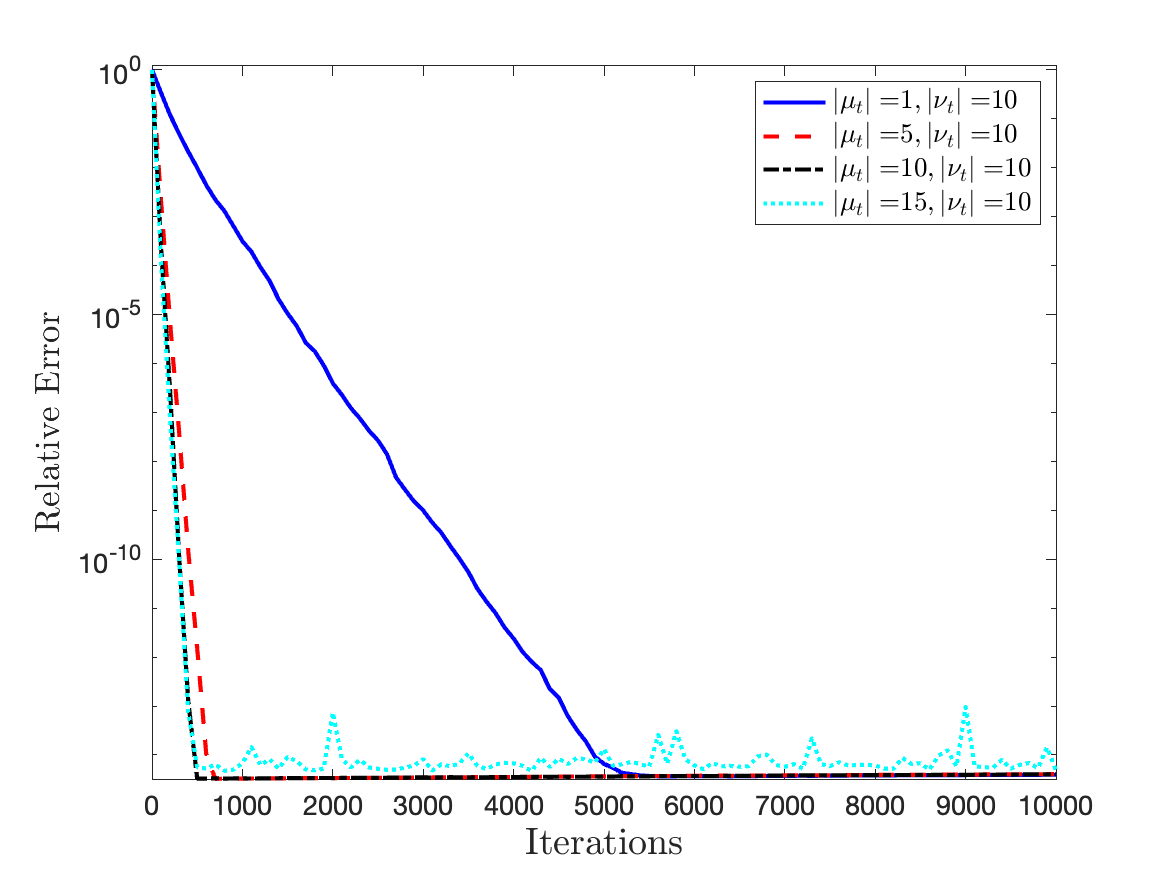}\hfill%
    \includegraphics[width=0.495\textwidth]{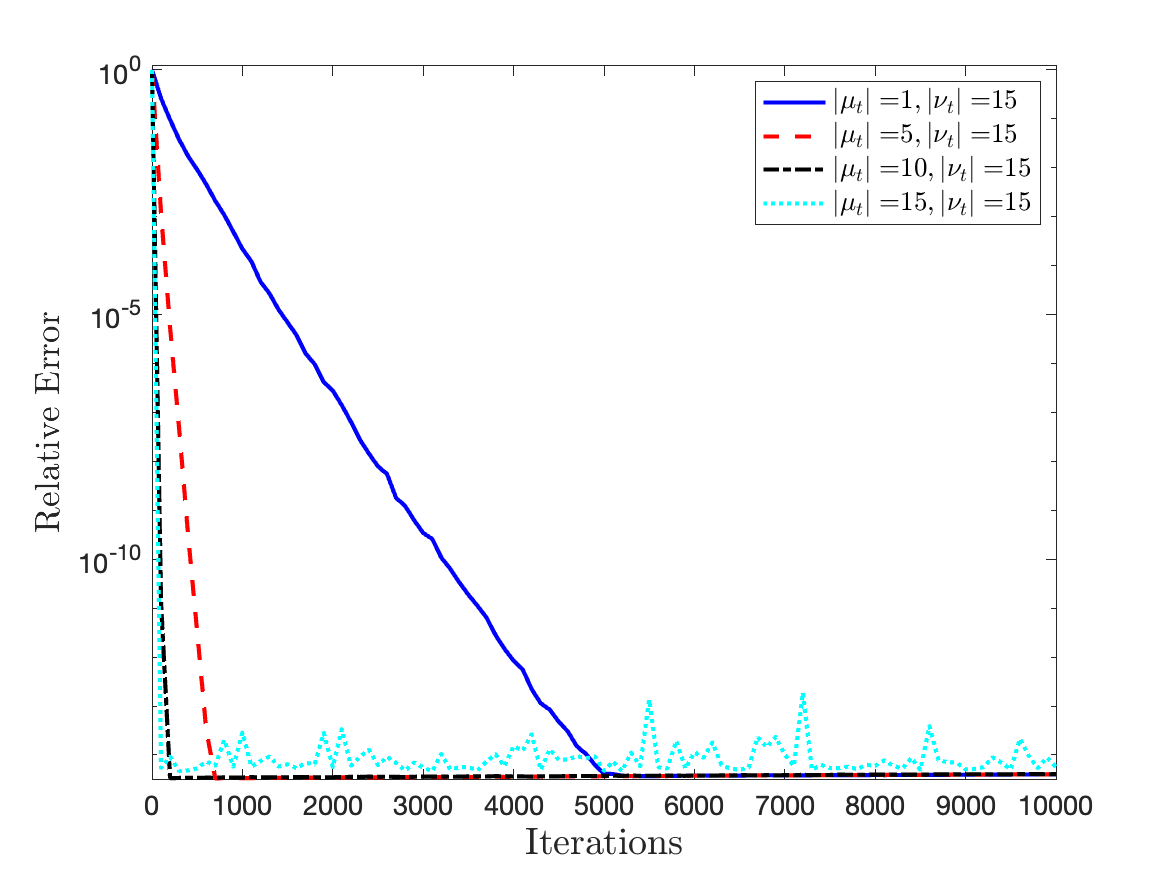}
    \caption{Relative error $\|\tX^{(t)} - \tX^\ddagger\|_F/\|\tX^\ddagger\|_F$ vs iteration $t$ of FacTBRK on consistent linear system when $\tA$ is over-determined, $\tU$ is over-determined and $\tV$ is under-determined. We consider outer block sizes $|\mu_t| \in \{1, 5, 10, 15\}$ and inner system block size (Upper Left) $|\nu_t| = 1$, (Upper Right) $|\nu_t| = 5$, (Lower Left) $|\nu_t| = 10$, (Lower Right) $|\nu_t| = 15$. }\label{fig:facTBRK_differentsystems_case2_1}
\end{figure}

\begin{figure}
    \includegraphics[width=0.495\textwidth]{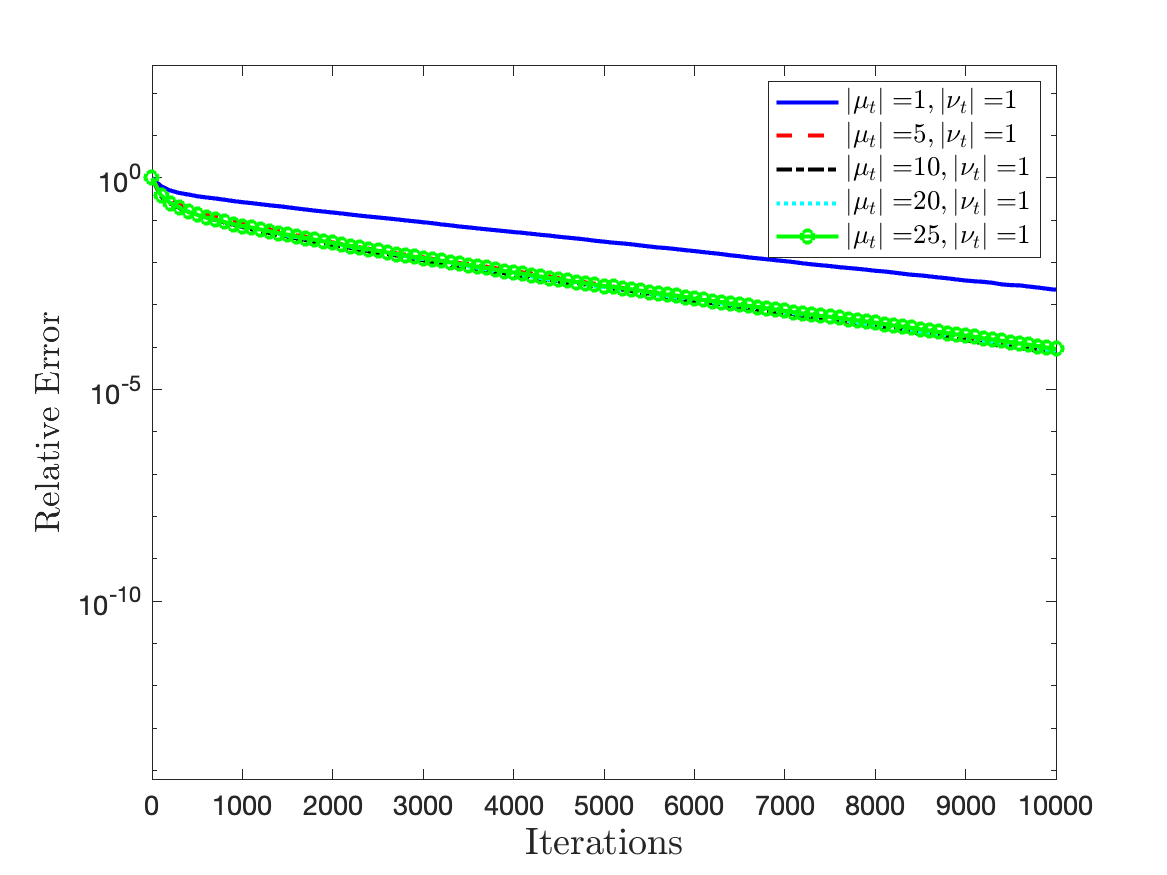}\hfill%
    \includegraphics[width=0.495\textwidth]{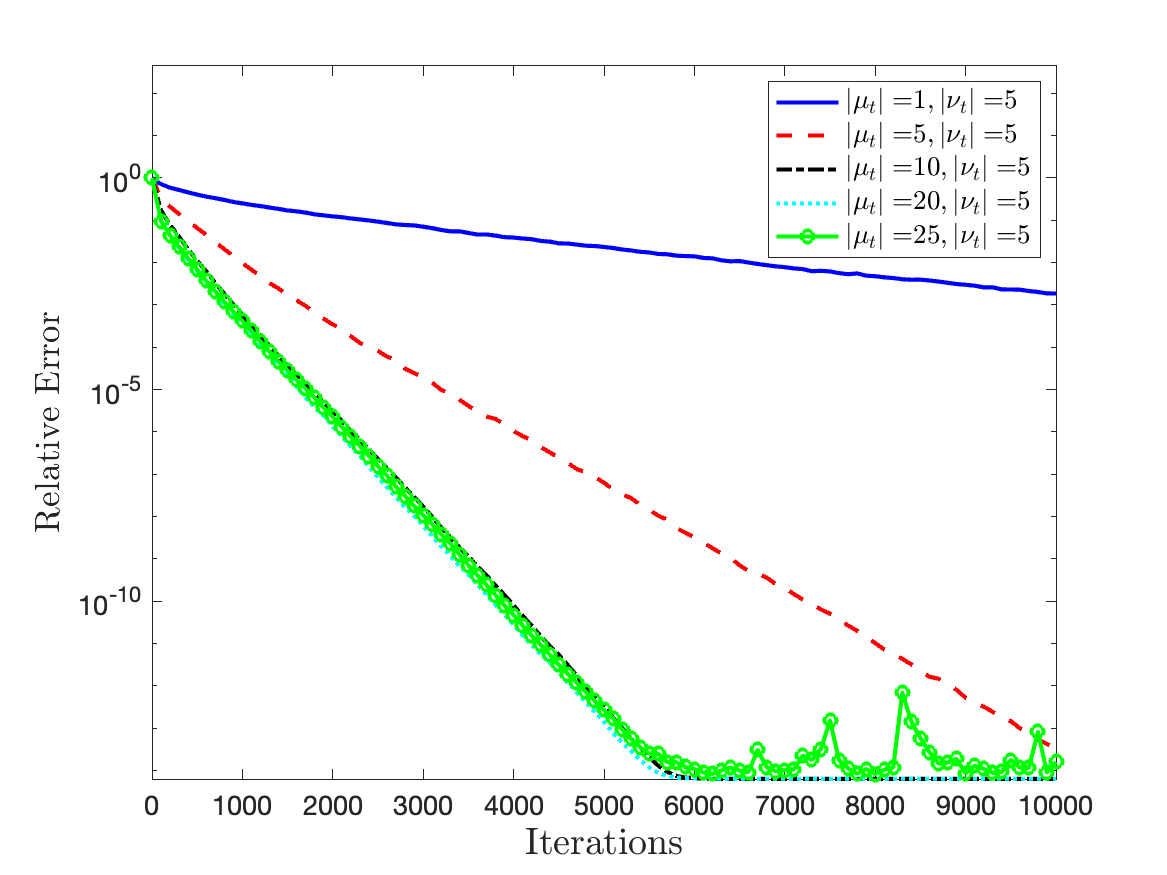}\hfill%
   \includegraphics[width=0.495\textwidth]{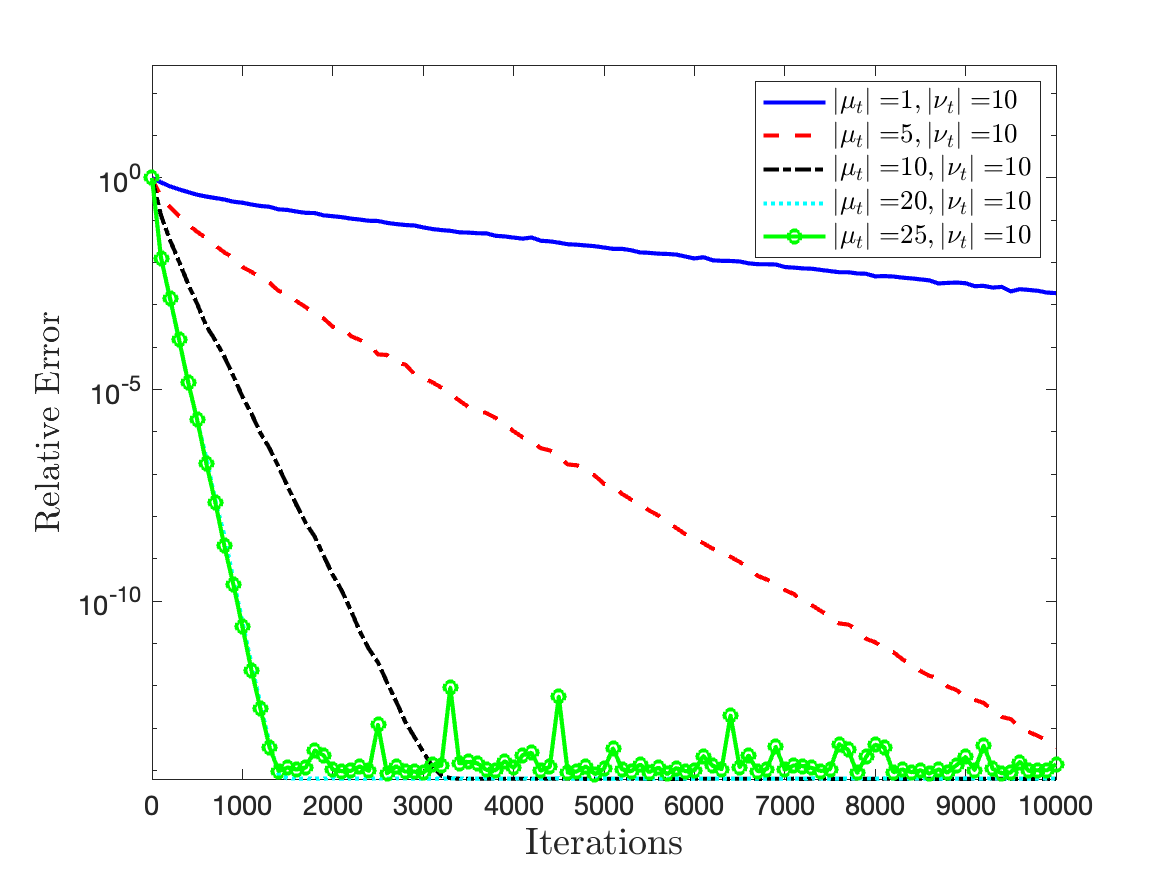}\hfill%
   \includegraphics[width=0.495\textwidth]{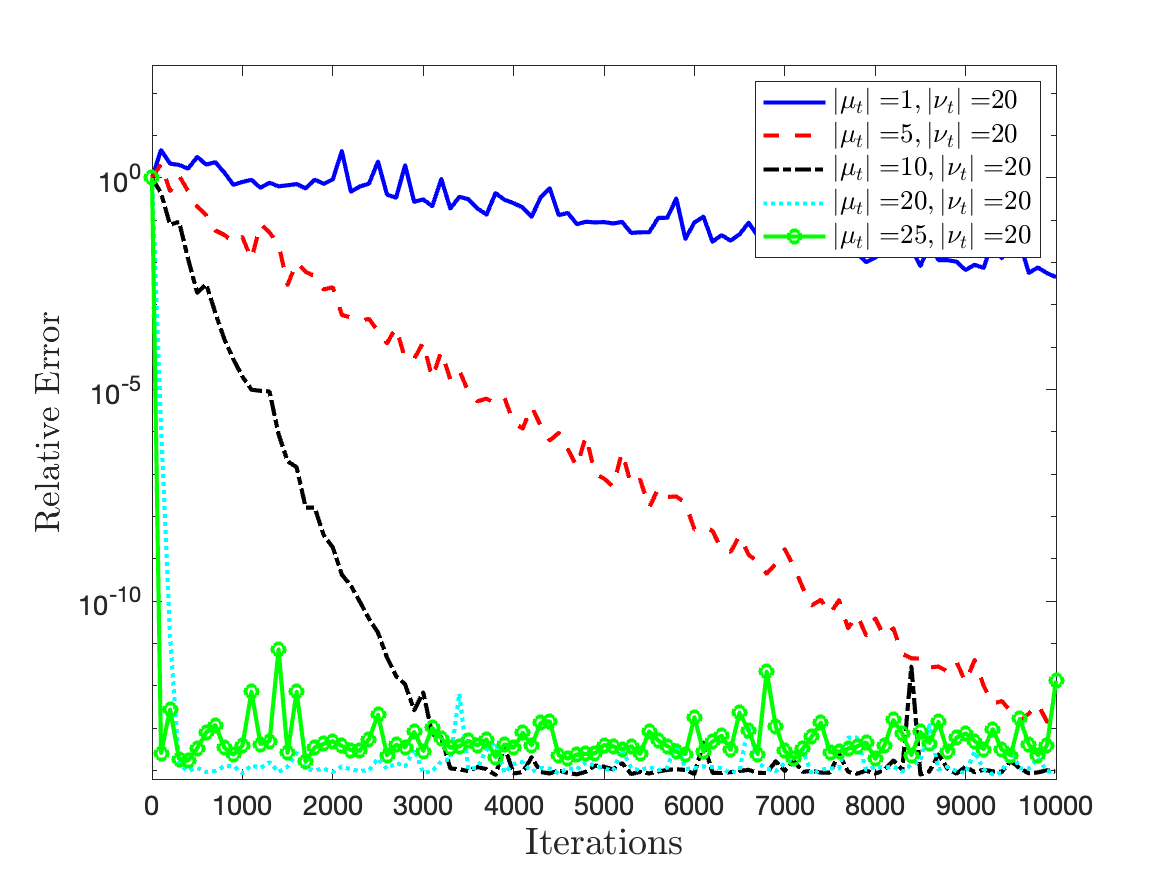}
    \caption{Relative error $\|\tX^{(t)} - \tX^\ddagger\|_F/\|\tX^\ddagger\|_F$ vs iteration $t$ of FacTBRK on consistent linear system when $\tA$ is over-determined and both $\tU$ and $\tV$ are over-determined. We consider outer block sizes $|\mu_t| \in \{1, 5, 10, 20, 25\}$ and inner system block size (Upper Left) $|\nu_t| = 1$, (Upper Right) $|\nu_t| = 5$, (Lower Left) $|\nu_t| = 10$, (Lower Right) $|\nu_t| = 20$. }\label{fig:facTBRK_differentsystems_case2_4}
\end{figure}

\begin{figure}[h]
    \includegraphics[width=0.495\textwidth]{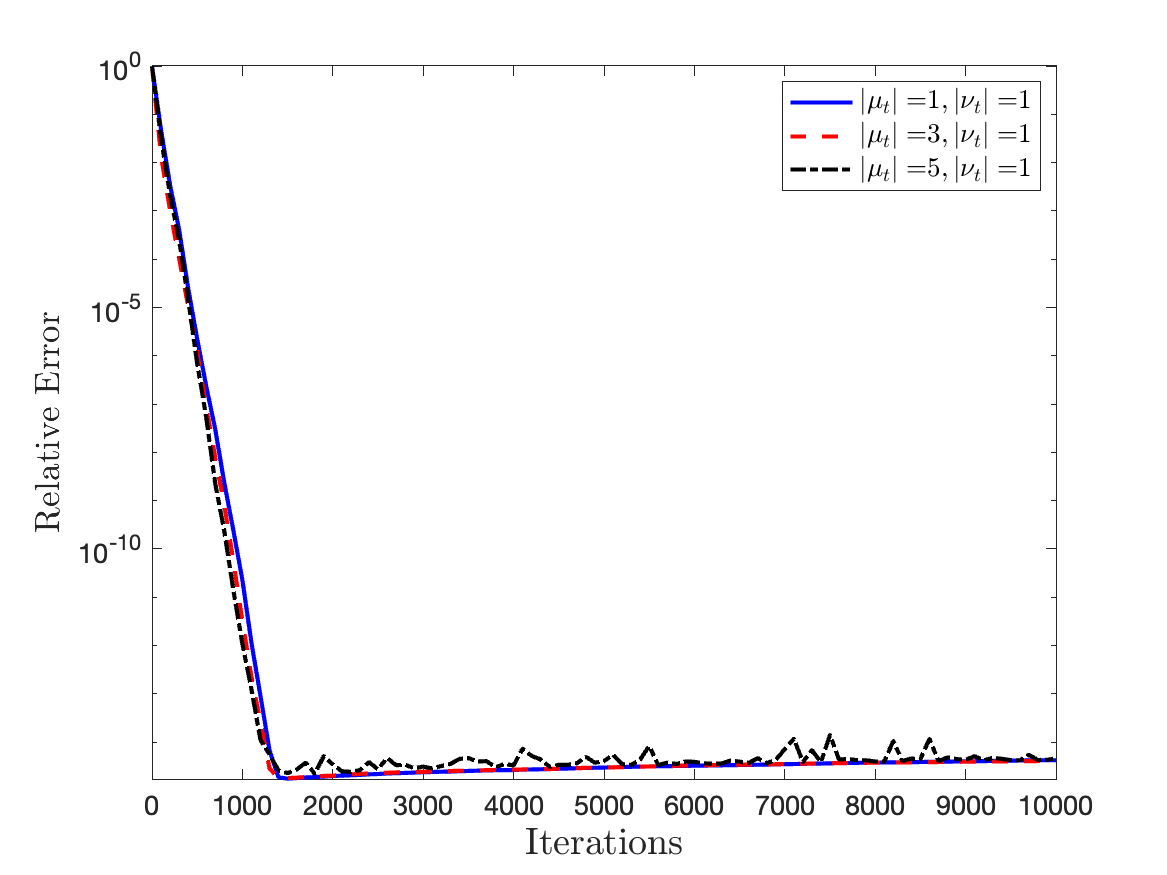}\hfill%
    \includegraphics[width=0.495\textwidth]{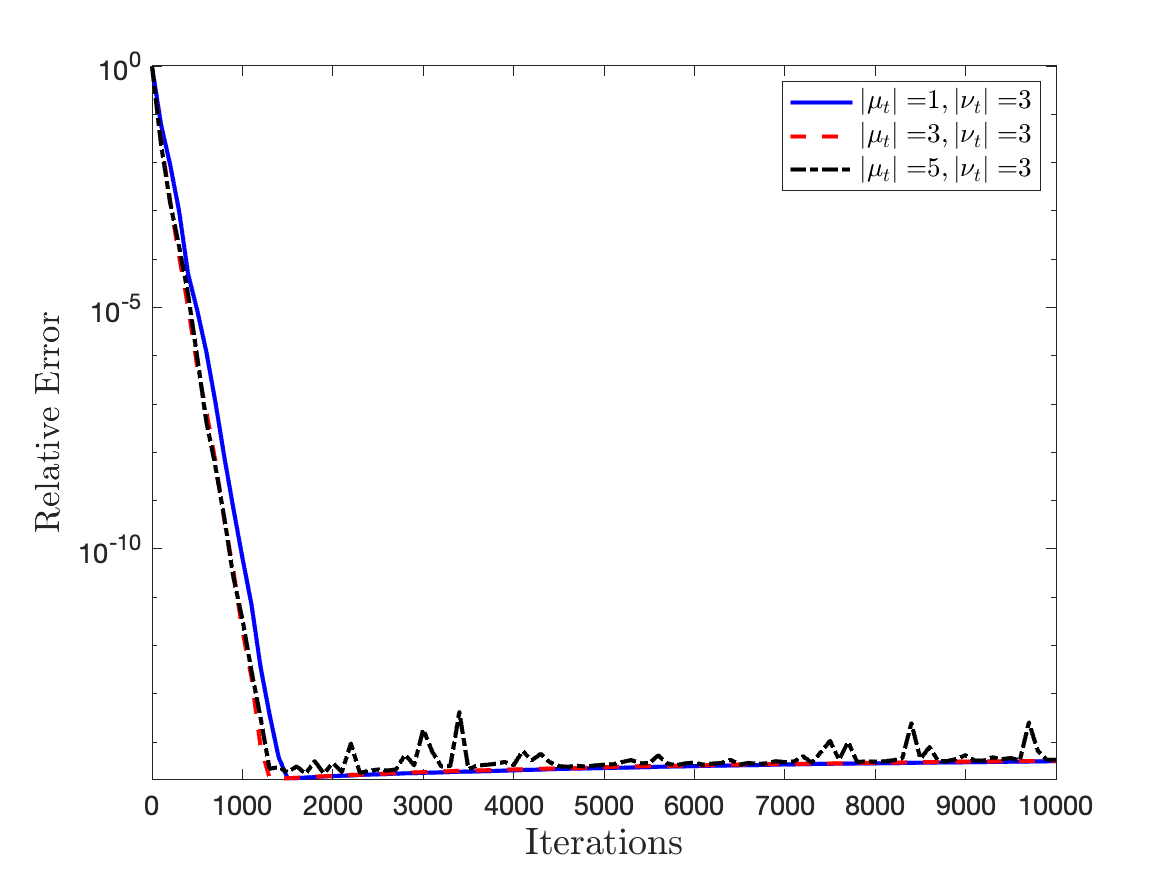}\hfill%
    \includegraphics[width=0.495\textwidth]{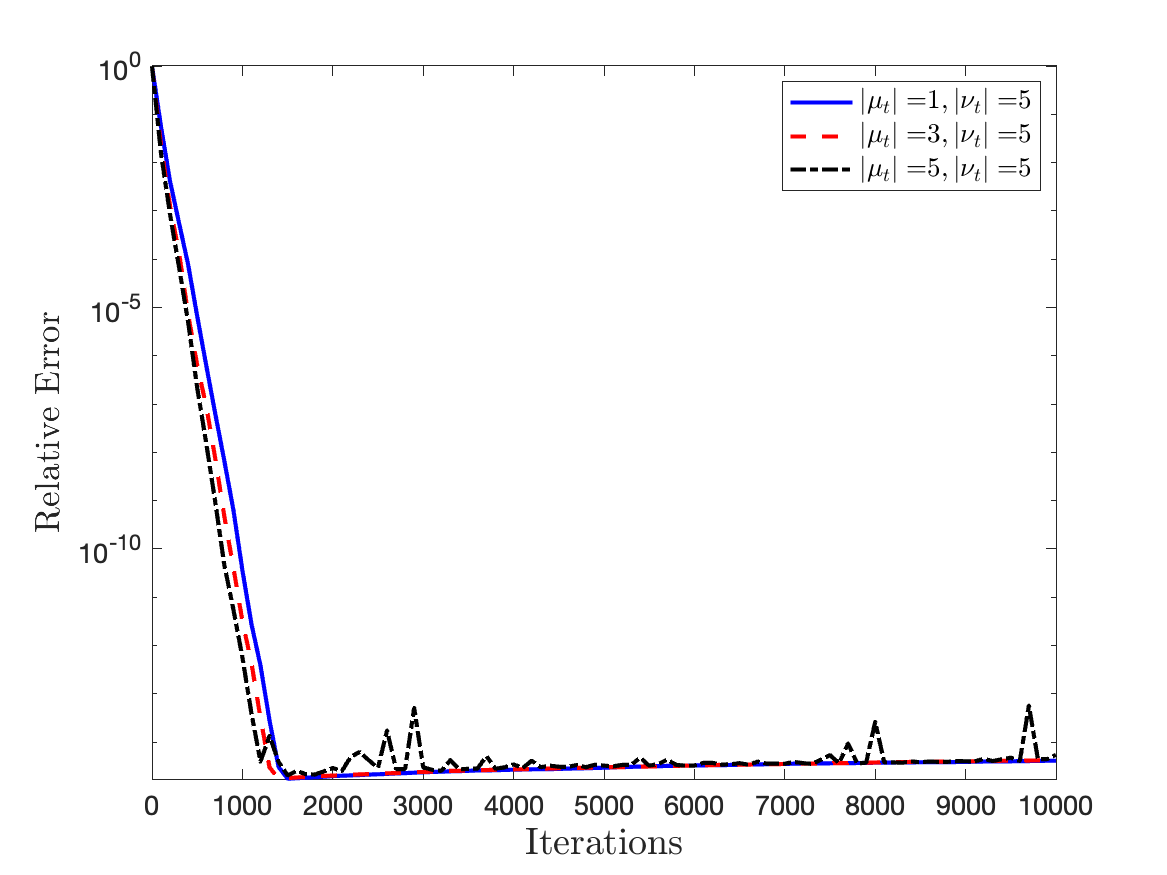}\hfill%

    \caption{Relative error $\|\tX^{(t)} - \tX^\ddagger\|_F/\|\tX^\ddagger\|_F$ vs iteration $t$ of FacTBREK on consistent linear system when $\tA$ is under-determined, $\tU$ is over-determined and $\tV$ is under-determined. We consider outer block sizes $|\mu_t| \in \{1, 3, 5\}$ and inner system block size (Upper Left) $|\nu_t| = 1$, (Upper Right) $|\nu_t| = 3$, (Lower) $|\nu_t| = 5$. }\label{fig:facTBREK_differentsystems_case1_1}
\end{figure}

\begin{figure}[h]
    \includegraphics[width=0.495\textwidth]{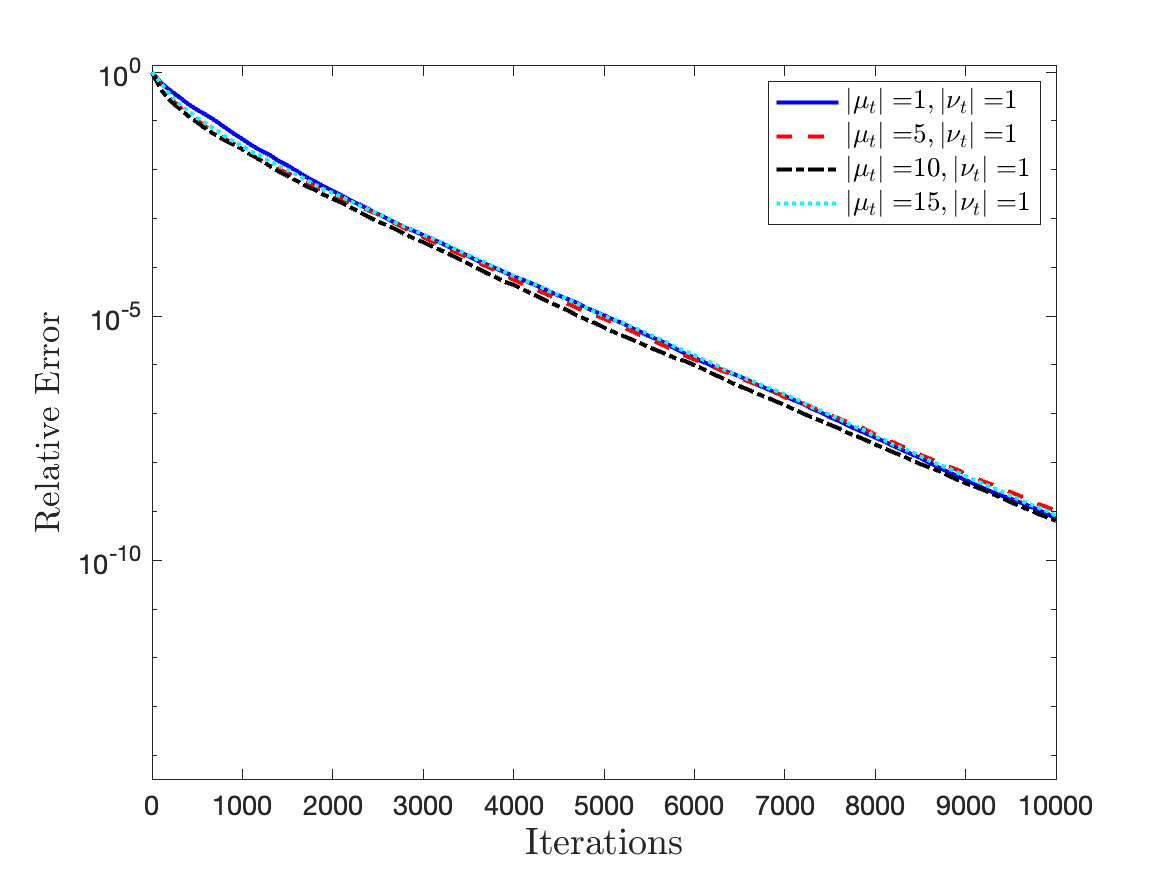}\hfill%
    \includegraphics[width=0.495\textwidth]{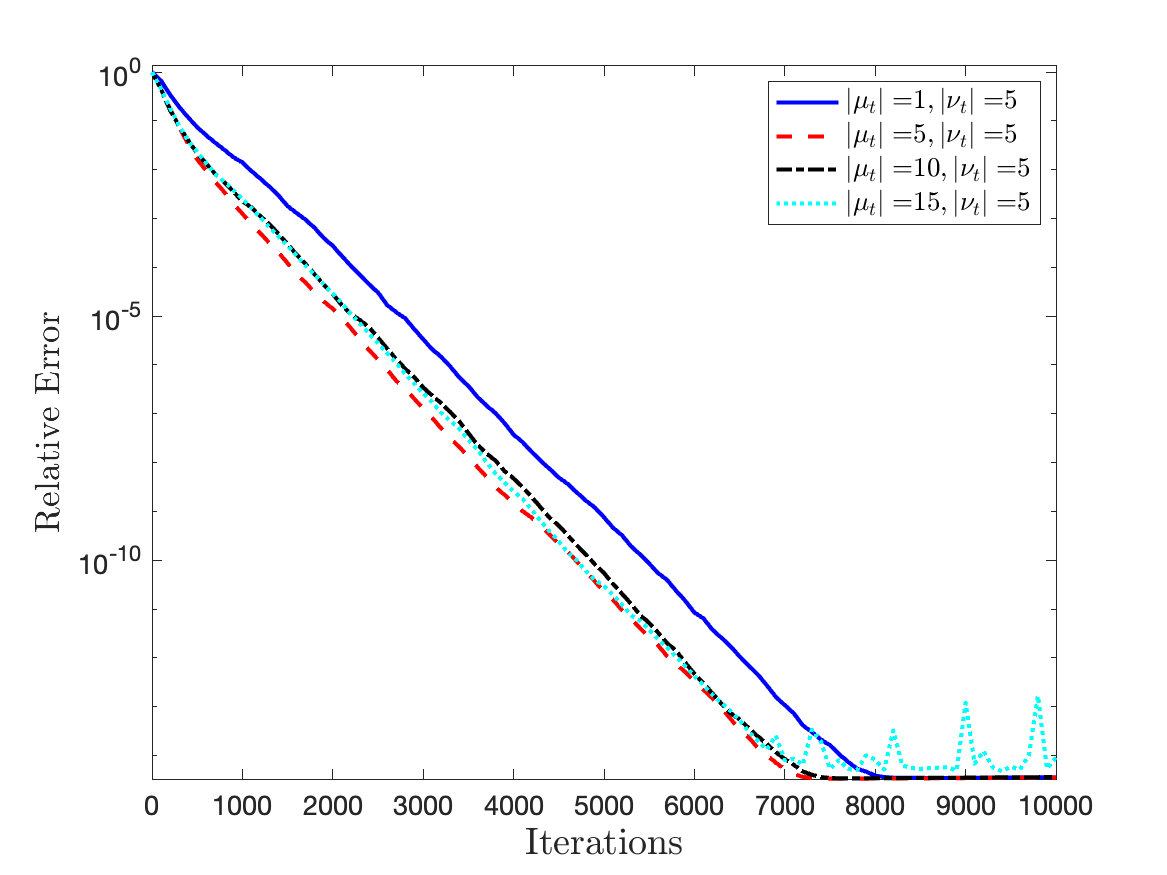}\hfill%
    \includegraphics[width=0.495\textwidth]{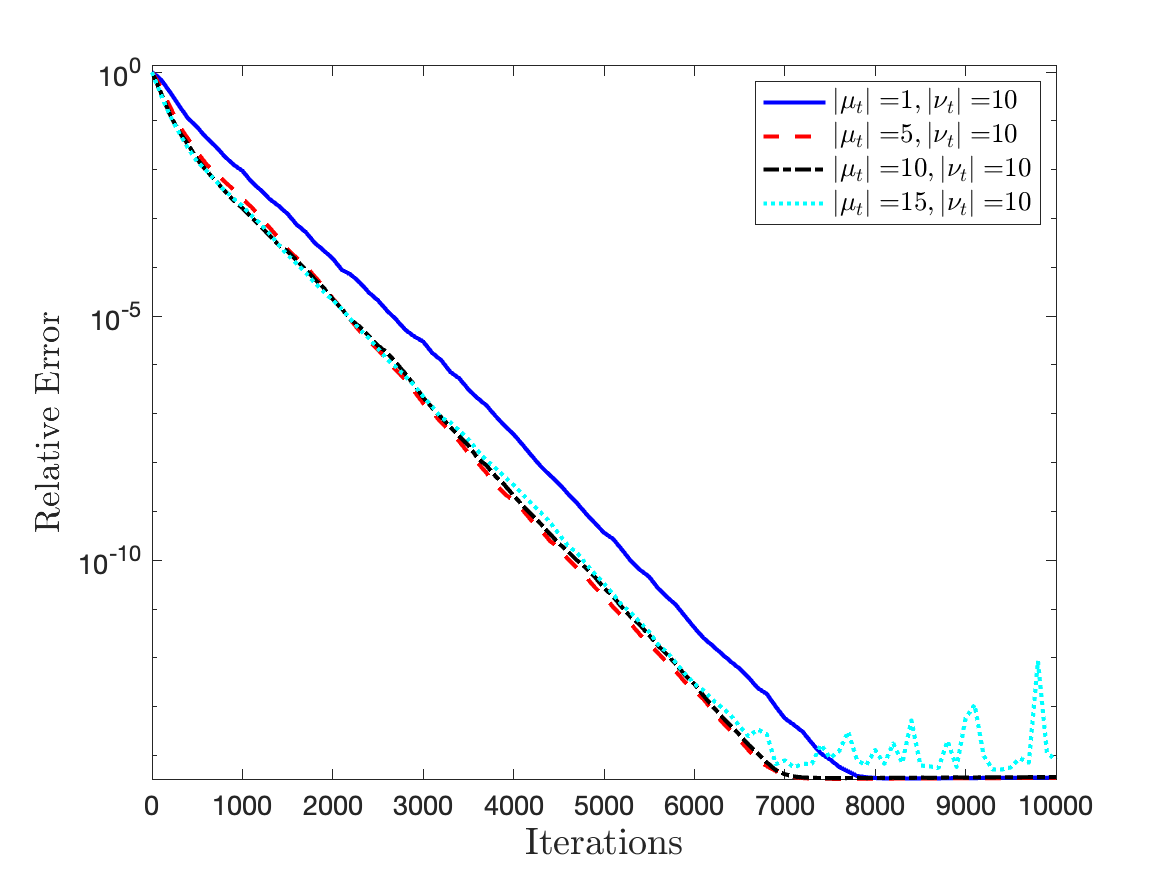}\hfill%
    \includegraphics[width=0.495\textwidth]{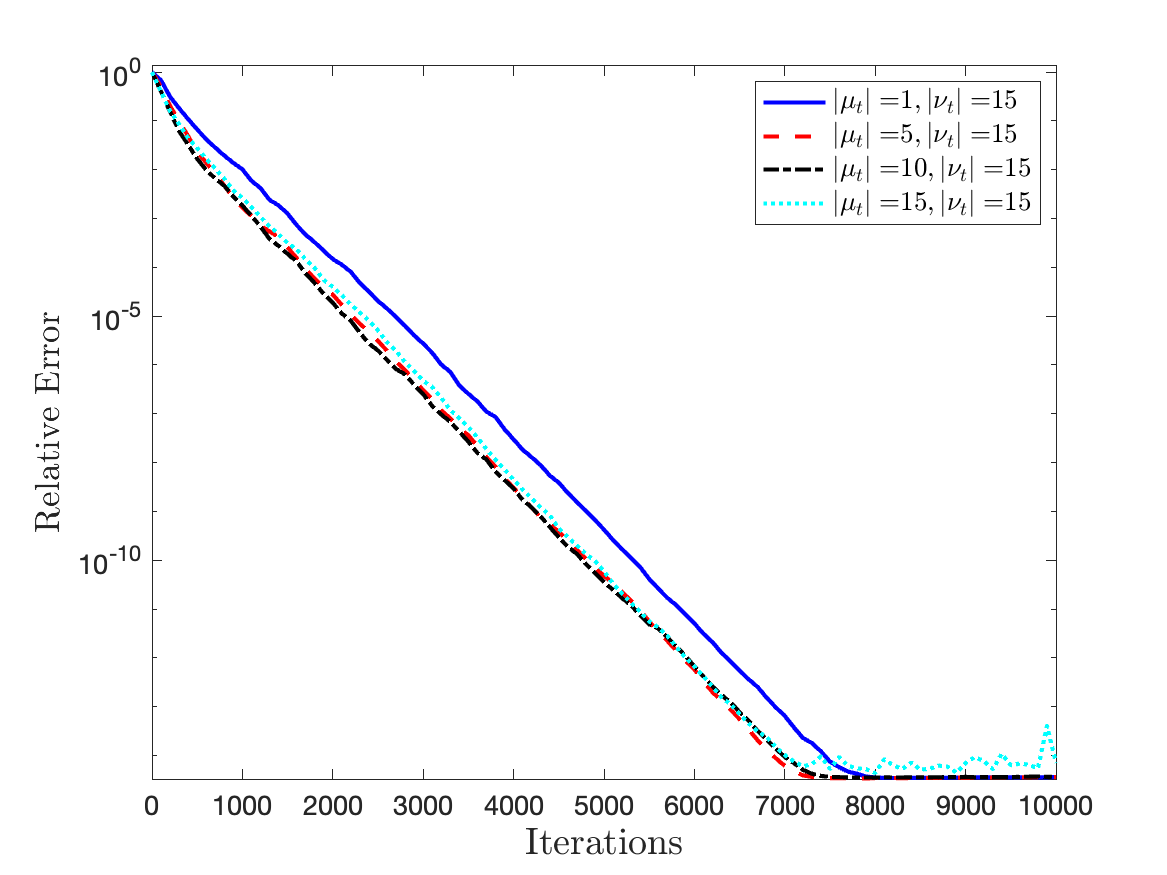}
    \caption{Relative error $\|\tX^{(t)} - \tX^\ddagger\|_F/\|\tX^\ddagger\|_F$ vs iteration $t$ of FacTBREK on consistent linear system when $\tA$ is over-determined, $\tU$ is over-determined and $\tV$ is under-determined. We consider outer block sizes $|\mu_t| \in \{1, 5, 10, 15\}$ and inner system block size (Upper Left) $|\nu_t| = 1$, (Upper Right) $|\nu_t| = 5$, (Lower Left) $|\nu_t| = 10$, (Lower Right) $|\nu_t| = 15$. }\label{fig:facTBREK_differentsystems_case2_1}
\end{figure}

\begin{figure}[h]
    \includegraphics[width=0.495\textwidth]{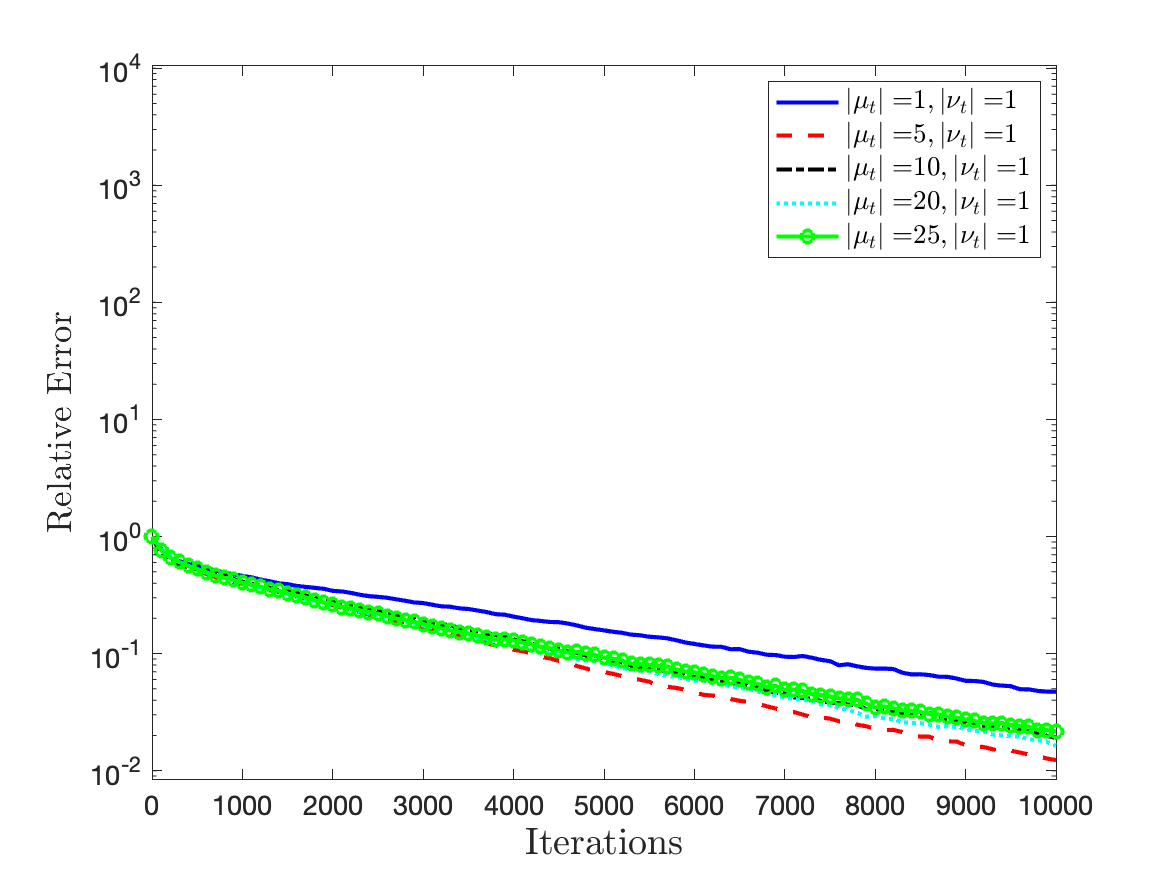}\hfill%
    \includegraphics[width=0.495\textwidth]{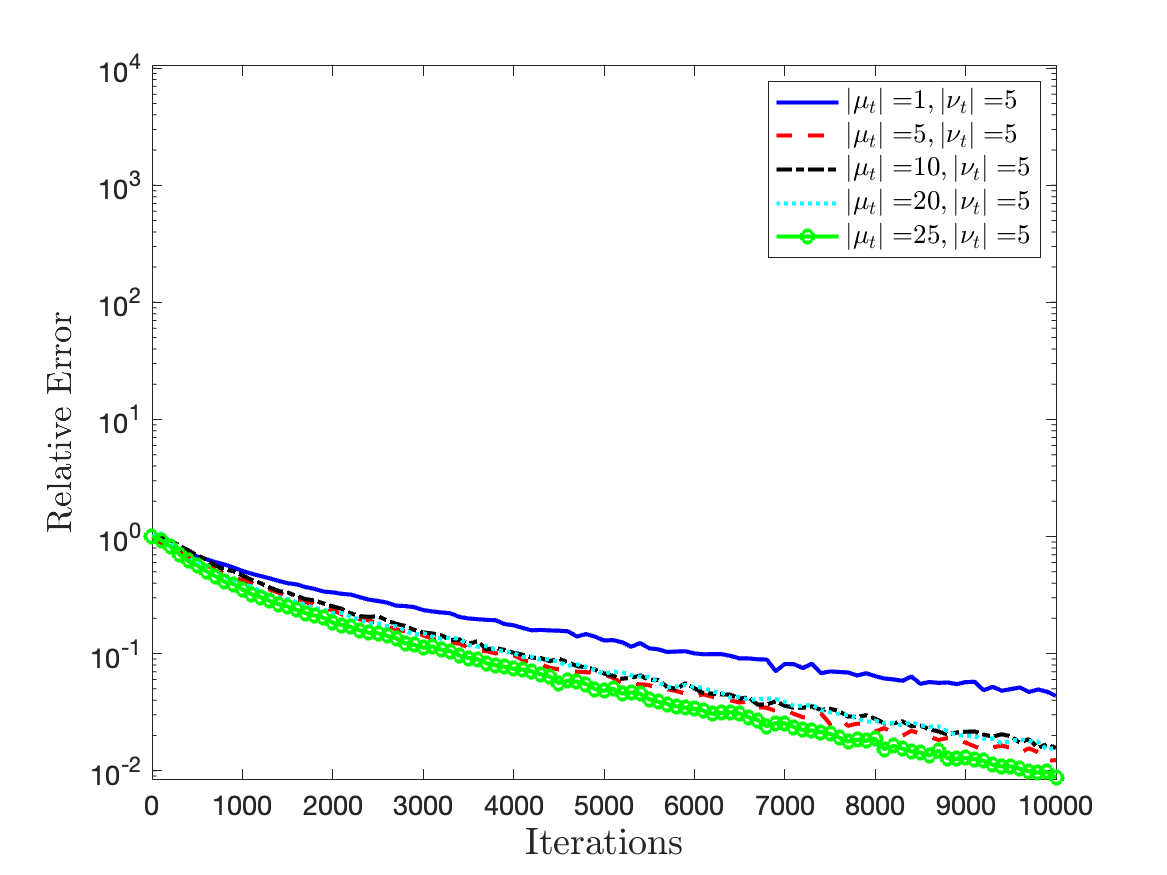}\hfill%
    \includegraphics[width=0.495\textwidth]{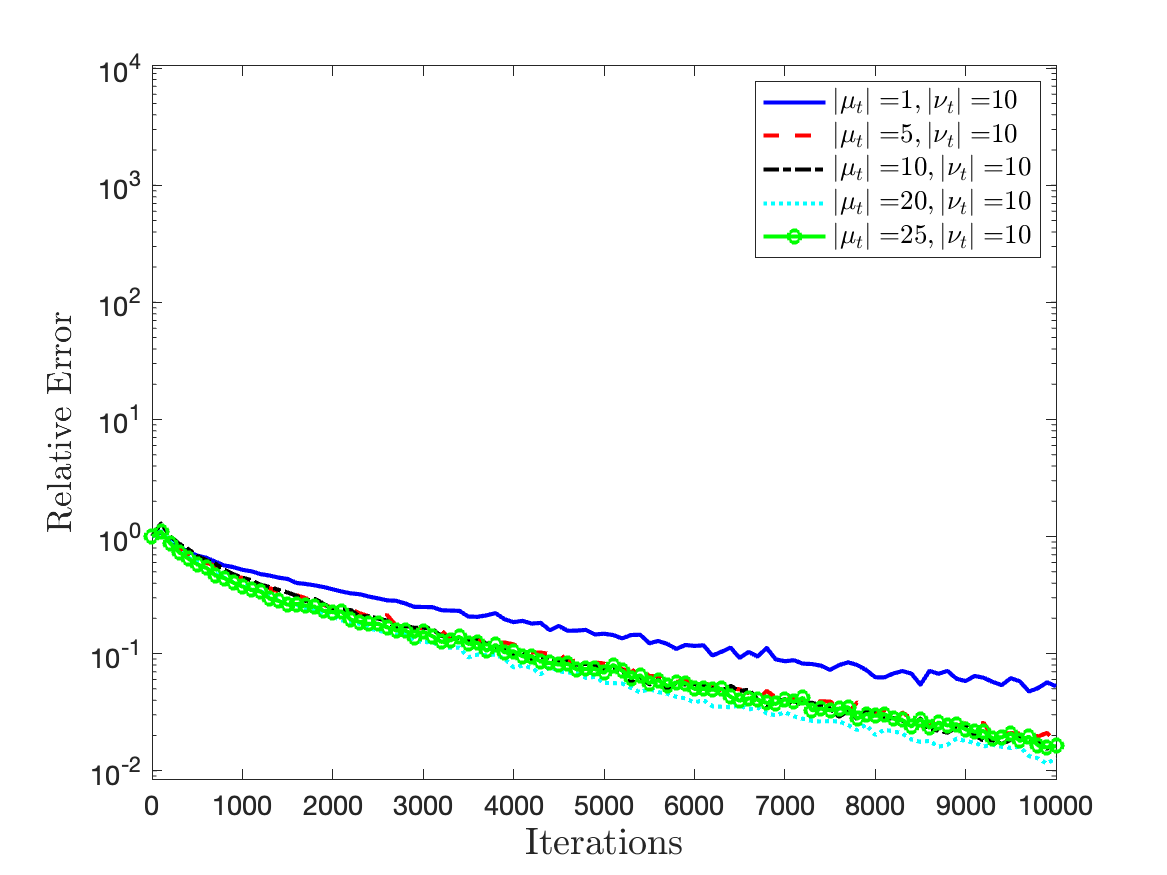}\hfill%
    \includegraphics[width=0.495\textwidth]{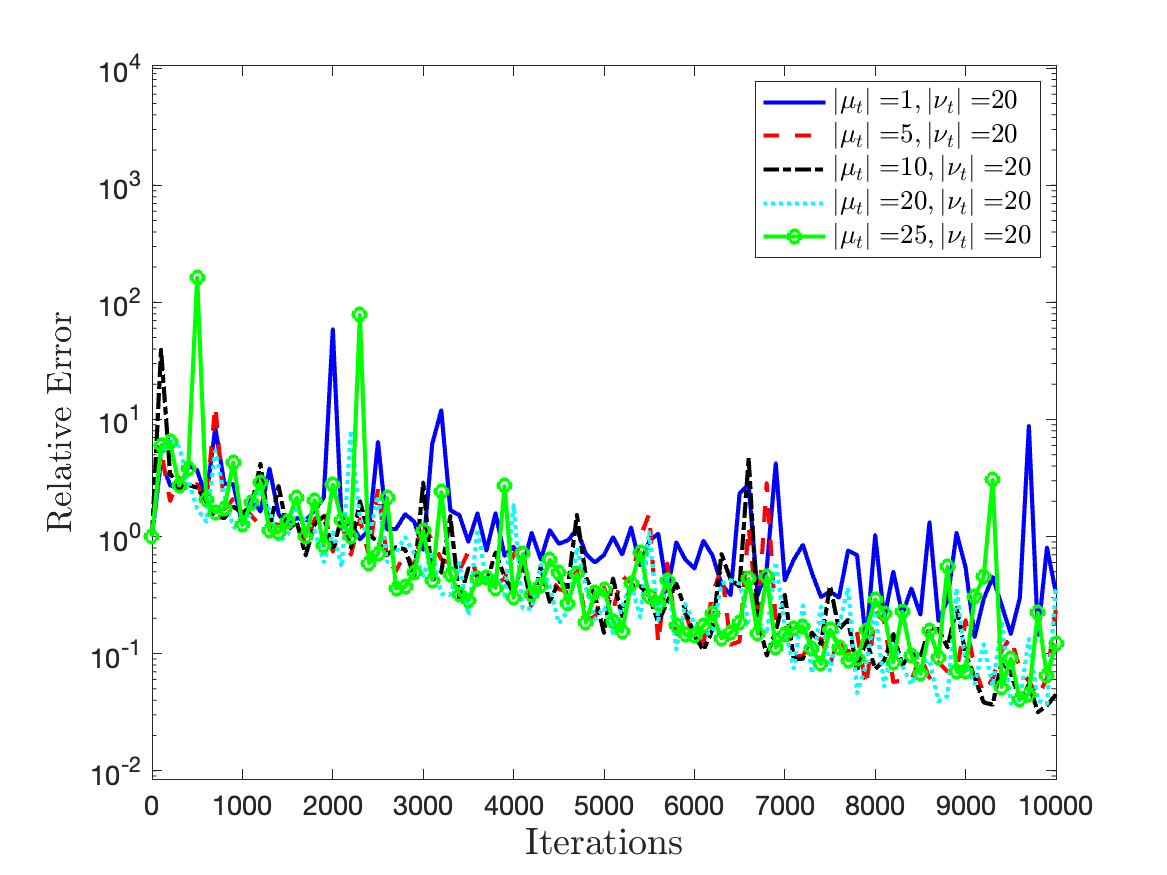}
    \caption{Relative error $\|\tX^{(t)} - \tX^\ddagger\|_F/\|\tX^\ddagger\|_F$ vs iteration $t$ of FacTBREK on consistent linear system when $\tA$ is over-determined, and both $\tU$ and $\tV$ are over-determined. We consider outer block sizes $|\mu_t| \in \{1, 5, 10, 20, 25\}$ and inner system block size (Upper Left) $|\nu_t| = 1$, (Upper Right) $|\nu_t| = 5$, (Lower Left) $|\nu_t| = 10$, (Lower Right) $|\nu_t| = 20$. }\label{fig:facTBREK_differentsystems_case2_4}
\end{figure}

\newpage

\subsubsection{Effect of different types of block sets}

In this section, for both the consistent and inconsistent cases, we compare two types of algorithms. The first type uses FacTBRK and FacTBREK as defined in previous sections, i.e., using blocks of a given size without specifying any row-slice partition of the outer system tensor $\tU$. For the second type of algorithm, at each iteration, we draw a block uniformly at random (and independently of all previous choices) from a partition $T = \{\tau_1,\, \hdots, \tau_m \}$ of the row slice indices of $\tU$. In one case of the second type of algorithm, the $\tau_i$'s have the same size $\mu_t$, and in another case, their size can vary but must average to $\mu_t$ (and cannot be much larger or much smaller than this value).

\begin{figure}
    \includegraphics[width=0.495\textwidth]{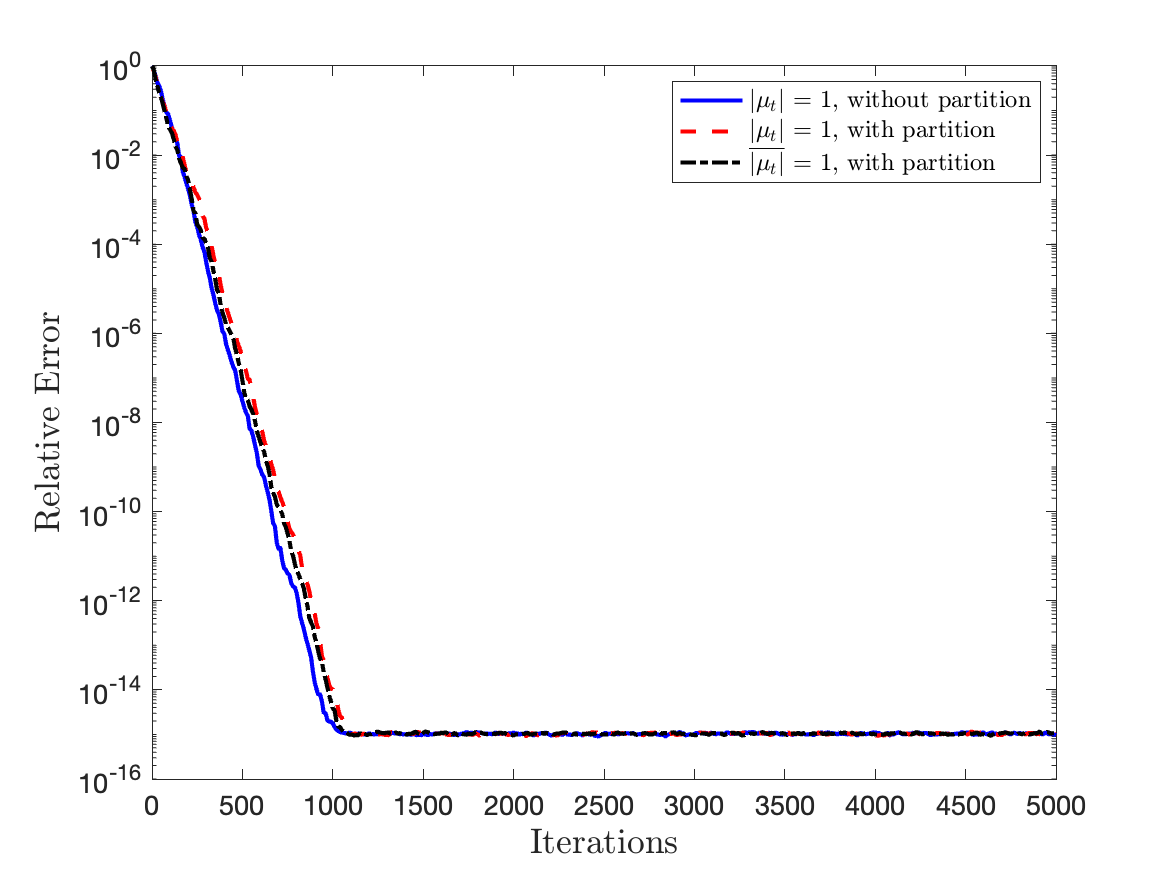}\hfill%
    \includegraphics[width=0.495\textwidth]{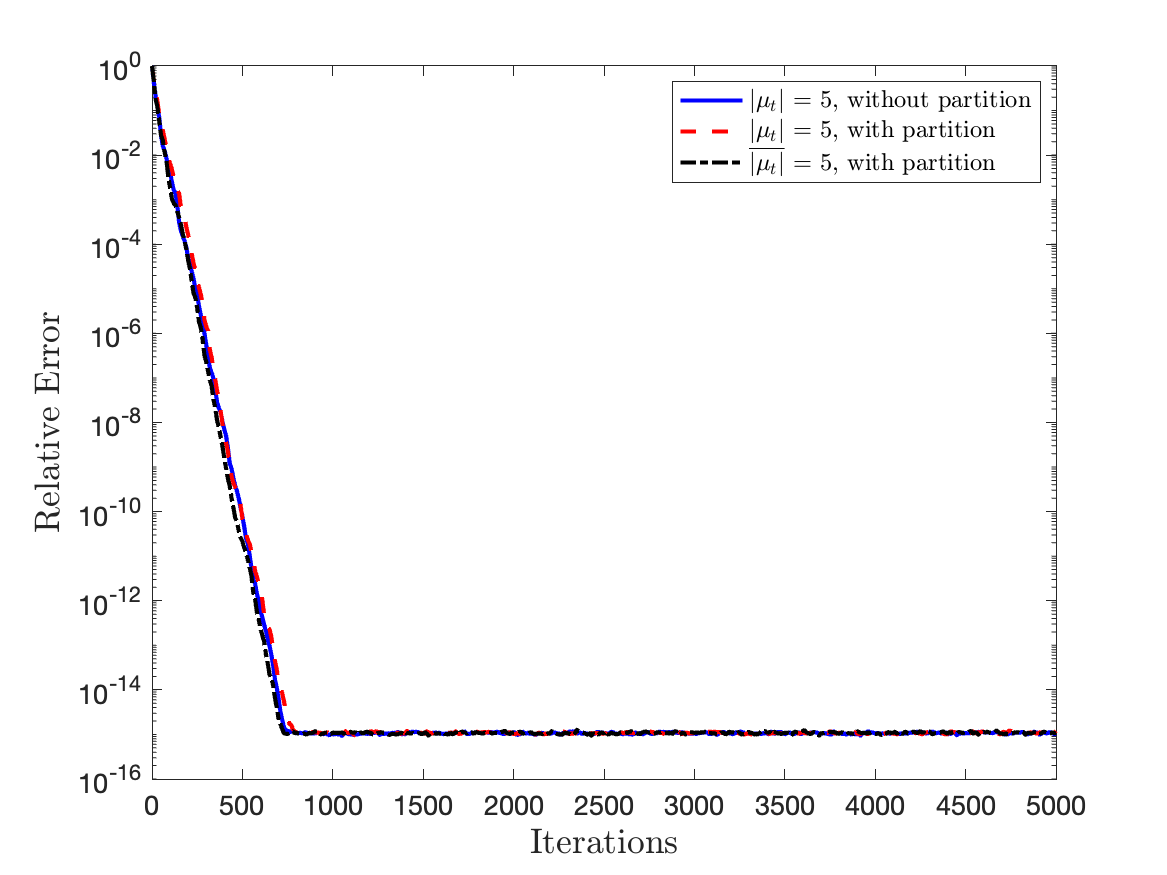}\hfill%
   \includegraphics[width=0.495\textwidth]{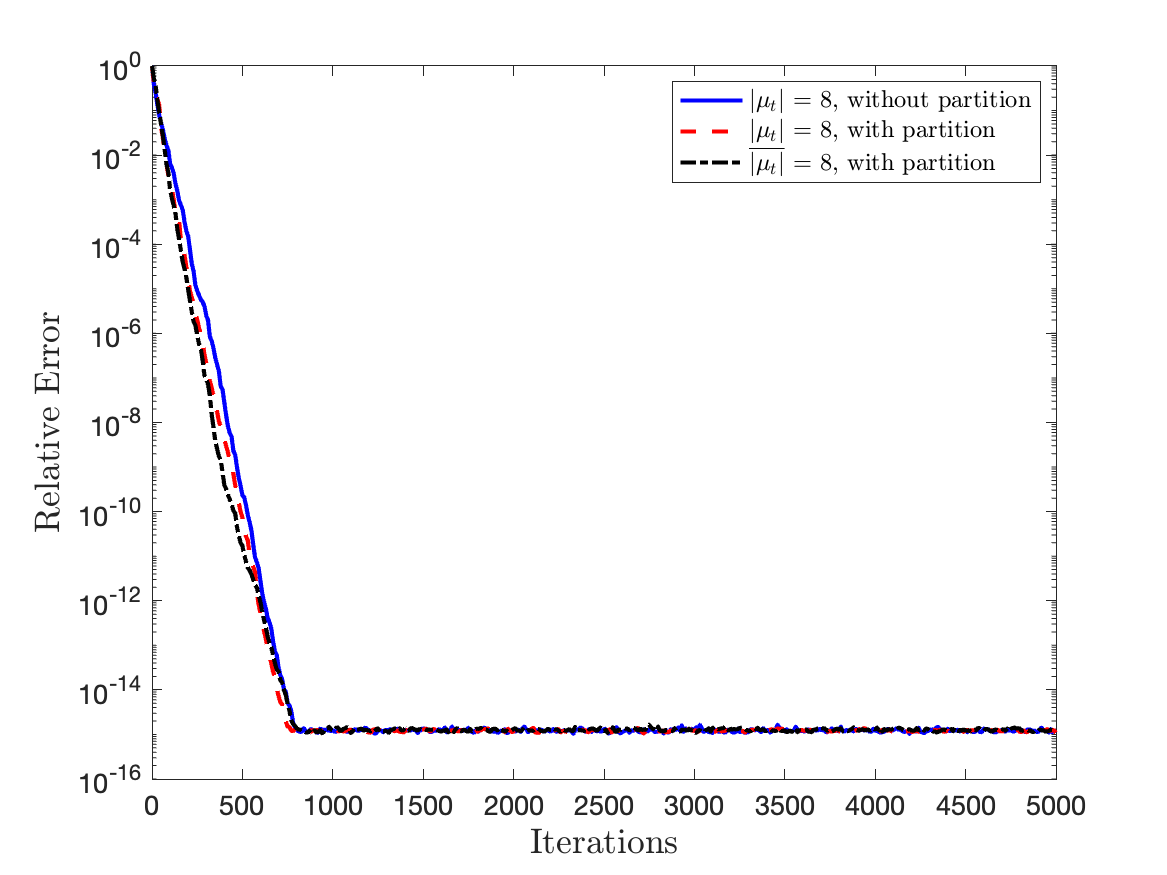}\hfill%
    \includegraphics[width=0.495\textwidth]
    {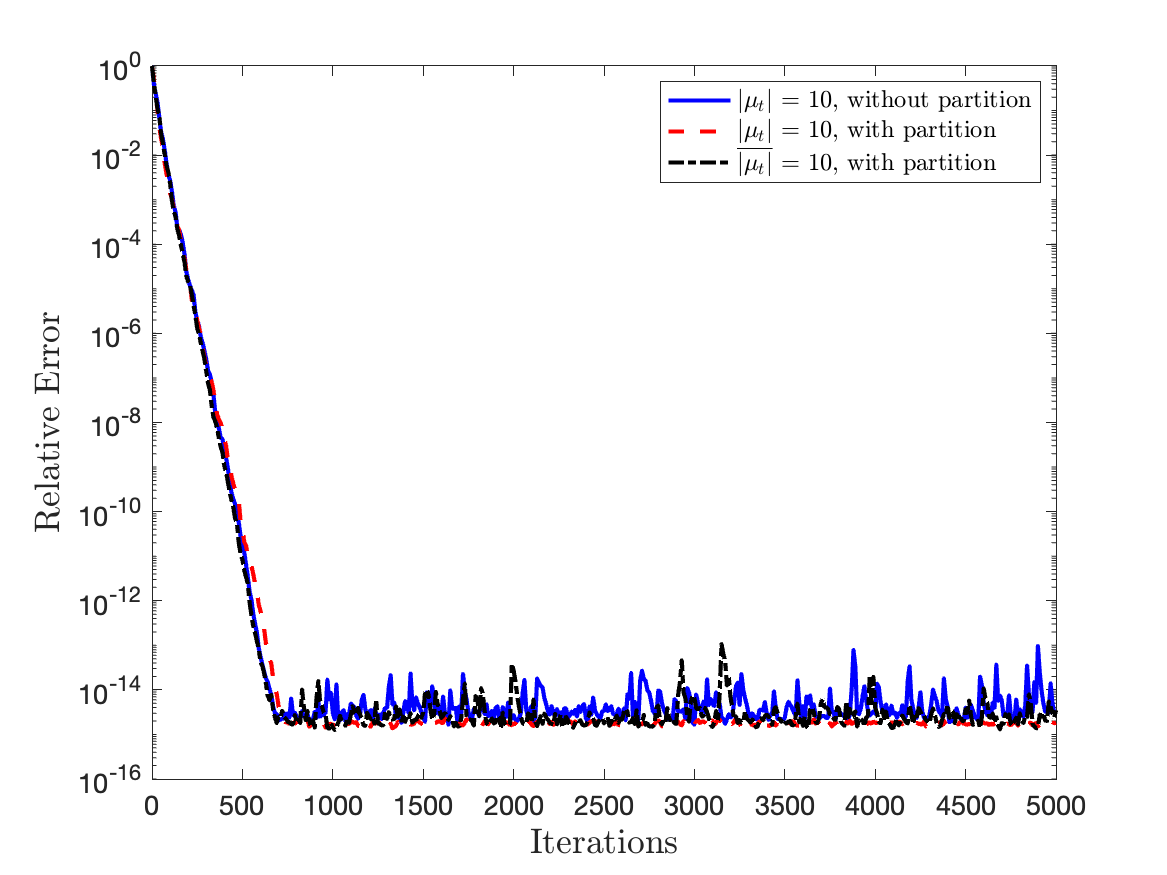}

    \caption{Relative error $\|\tX^{(t)} - \tX^\ddagger\|_F/\|\tX^\ddagger\|_F$ vs iteration $t$ of FacTBRK on a consistent linear system for outer block sizes $|\mu_t| \in \{1, 5, 10\}$ and constant inner system block size $|\nu_t| = 1$ for different types of block partition. The notation $\overline{|\mu_t|}$ means ``average block size $|\mu_t|$''.}\label{fig:facTBRK_typesofblocks}
\end{figure}

The results in Figure \ref{fig:facTBRK_typesofblocks} show that, for a consistent system and inner system block size 1, having different types of block sets for the outer system does not significantly affect the convergence of our FacTBRK algorithm, provided that the block size is not too large. (The reason for the latter restriction is that, heuristically, choosing a large outer block size implies confidence in solving that outer system that is not warranted from using a block of size $\vu_t =$ 1 to solve the inner system. Convergence is not excluded but might come at a much slower rate.) Typically, using partitions of equal-size blocks for $\tU$ or not using any partition at all yields faster and less oscillatory convergence as the size of blocks for the outer system increases. However, we conclude by saying that, in general, these differences are inconsequential in the long run.

\subsubsection{Tensor vs matricized methods}

In these experiments, we compare the behavior of FacTBRK and FacTBREK on the tensor system $\tU \tV \tX = \tY$ to FacTBRK and FacTBREK on the equivalent matricized system $\bcirc(\tU) \bcirc(\tV) \unfold(\tX) = \unfold(\tY)$, for consistent and inconsistent systems.  In each experiment in this section, we generate $\tU \in \mathbb{R}^{40 \times 10 \times 7}, \tV \in \mathbb{R}^{10 \times 5 \times 7}$, and $\tX_{\text{gen}} \in \mathbb{R}^{5 \times 5 \times 7}$.  We generate consistent and inconsistent systems according to the process described above.

\begin{figure}
    \includegraphics[width=0.495\textwidth]{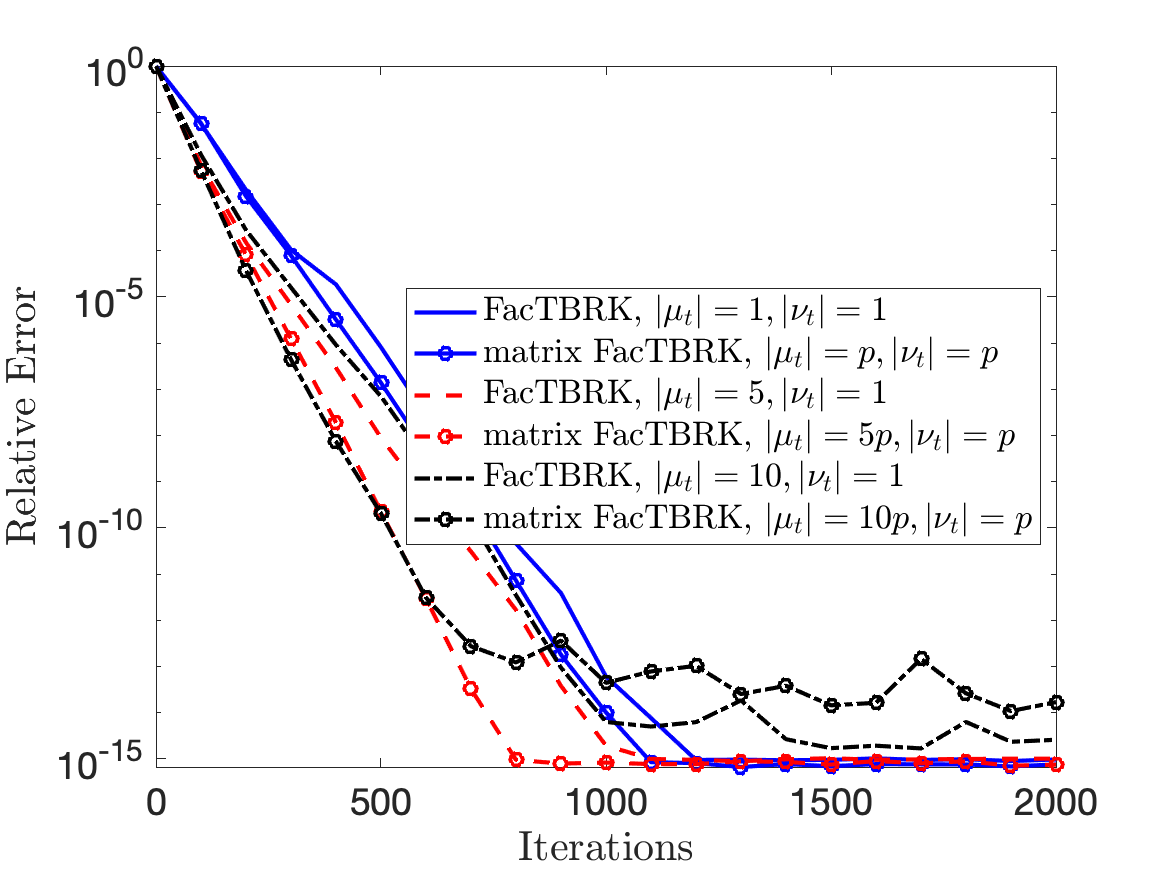}\hfil%
    \includegraphics[width=0.495\textwidth]{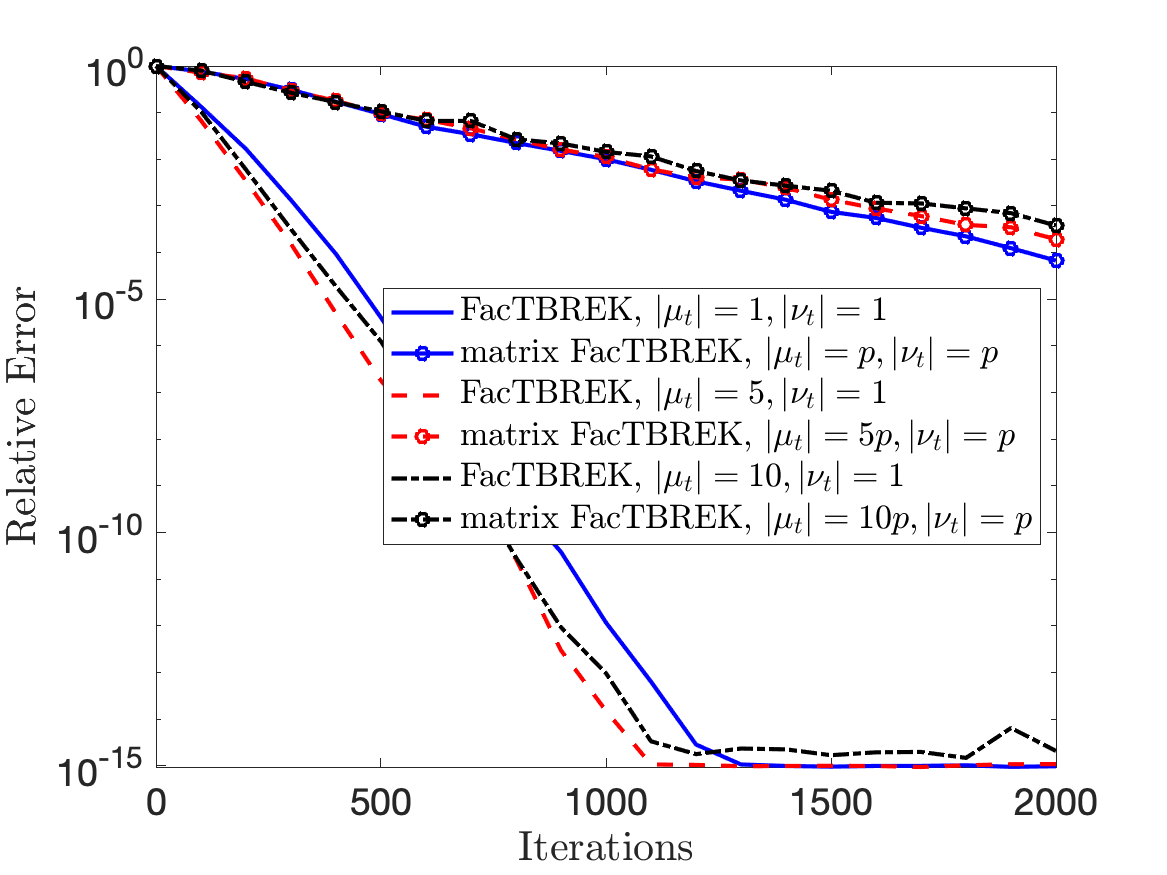}
    \caption{(Left) Relative error $\|\tX^{(t)} - \tX^\ddagger\|_F/\|\tX^\ddagger\|_F$ vs iteration $t$ of FacTBRK on the consistent tensor system $\tU \tV \tX = \tY$ with $|\mu_t| \in \{1, 5, 10\}$ and $|\nu_t| = 1$ and FacTBRK on the equivalent matricized system $\bcirc(\tU) \bcirc(\tV) \unfold(\tX) = \unfold(\tY)$ with  $|\mu_t| \in \{p, 5p, 10p\}$ and  $|\nu_t| = p$ where $p = 7$. (Right) Relative error $\|\tX^{(t)} - \tX^\ddagger\|_F/\|\tX^\ddagger\|_F$ vs iteration $t$ of FacTBREK on the inconsistent tensor system $\tU \tV \tX = \tY$ with $|\mu_t| \in \{1, 5, 10\}$ and $|\nu_t| = 1$ and FacTBREK on the equivalent matricized system $\bcirc(\tU) \bcirc(\tV) \unfold(\tX) = \unfold(\tY)$ with  $|\mu_t| \in \{p, 5p, 10p\}$ and  $|\nu_t| = p$ where $p = 7$.}\label{fig:tensorvsmatricized}
\end{figure}

In our first experiment, we generate a consistent system according to the process described above and run FacTBRK (Algorithm~\ref{alg:facTBRK}) on the tensor system $\tU \tV \tX = \tY$ with outer system block sizes $|\mu_t| \in \{1, 5, 10\}$ and inner system block size $|\nu_t| = 1$ and FacTBRK (Algorithm~\ref{alg:facTBRK}) on the equivalent matricized system $\bcirc(\tU) \bcirc(\tV) \unfold(\tX) = \unfold(\tY)$ with outer system block sizes $|\mu_t| \in \{p, 5p, 10p\}$ and inner system block size $|\nu_t| = p$ where $p = 7$ is the third tensor dimension.  We note that these choices of block sizes imply that the FacTBRK method applied to each system is accessing the same amount of information from the tensors $\tU$ and $\tV$ in each iteration.  Here, the blocks are uniformly sampled from all possible blocks of this size, $T_{\tU} = \{\mu \in \mathcal{P}([m]) : |\mu| = |\mu_t|\}$ and $T_{\tV} = \{\nu \in \mathcal{P}([m_1]) : |\nu| = |\nu_t|\}$.  The results of this experiment are plotted on the left of Figure~\ref{fig:tensorvsmatricized}.  We note that in this experiment, FacTBRK actually converges slightly faster on the matricized system $\bcirc(\tU) \bcirc(\tV) \unfold(\tX) = \unfold(\tY)$ than on the tensor system $\tU \tV \tX = \tY$, which we hypothesize to be due to the fact that this method may sample from a broader set of blocks than FacTBRK on the tensor system.

In our second experiment, we generate an inconsistent system according to the process described above and run FacTBREK (Algorithm~\ref{alg:facTBREK}) on the tensor system $\tU \tV \tX = \tY$ with outer system block sizes $|\mu_t| \in \{1, 5, 10\}$ and inner system block size $|\nu_t| = 1$ and FacTBREK (Algorithm~\ref{alg:facTBREK}) on the equivalent matricized system $\bcirc(\tU) \bcirc(\tV) \unfold(\tX) = \unfold(\tY)$ with outer system block sizes $|\mu_t| \in \{p, 5p, 10p\}$ and inner system block size $|\nu_t| = p$ where $p = 7$ is the third tensor dimension.  We again note that these choices of block sizes imply that the FacTBREK method applied to each system is accessing the same amount of information from the tensors $\tU$ and $\tV$ in each iteration.  Here, the blocks are uniformly sampled from all possible blocks of this size, $T_{\tU} = \{\mu \in \mathcal{P}([m]) : |\mu| = |\mu_t|\}$ and $T_{\tV} = \{\nu \in \mathcal{P}([m_1]) : |\nu| = |\nu_t|\}$.  The results of this experiment are plotted on the right of Figure~\ref{fig:tensorvsmatricized}.  In this case, we note that FacTBREK applied to the tensor linear system $\tU \tV \tX = \tY$ converges significantly faster than FacTBREK applied to the matricized system $\bcirc(\tU) \bcirc(\tV) \unfold(\tX) = \unfold(\tY)$; we hypothesize that this is due to the extended step of FacTBREK learning the components of $\tY$ which contribute to the system inconsistency much more quickly on the tensor system than on the matricized system.

\subsection{Applications to Video Deblurring}
\label{subsec:video_deblurring}
We consider the problem of recovering a true video tensor $\tX \in \R^{m \times n \times p}$ from a blurry video tensor $\tY \in \R^{m \times n \times p}$ with a known blurring operator. We further assume that the video is blurred frame by frame using a circular convolution kernel $\mat{H} \in \R^{m_1 \times n_1}$. Without loss of generality, we can assume that $\mat{H} \in \R^{m \times n}$ by suitably padding the kernel with zeros. Then, the connection between the  slices of the blurry and the true video tensors can be expressed as

 $$ \begin{bmatrix} \mat{H}_1 & \mat{H}_m & \cdots &   \mat{H}_2 \\
 \mat{H}_2 & \mat{H}_{1} & \cdots  &\mat{H}_3\\ \mat{H}_3 &\mat{H}_2 & \cdots  &\vdots \\ \vdots & \vdots &\vdots &\vdots \\ \mat{H}_{m-1} & \mat{H}_{m-2}  &\cdots &\mat{H}_m\\
 \mat{H}_m & \mat{H}_{m-1} & \cdots  & \mat{H_1}\end{bmatrix}
 \begin{bmatrix} \bigg| & \cdots & \bigg| \\ \\ \tt{Vec}\left(\tX_{1::}\right) &\cdots & \tt{Vec}\left(\tX_{m::} \right) \\ \\\bigg| & \cdots & \bigg|\\ \end{bmatrix}= \begin{bmatrix} \bigg| & \cdots & \bigg| \\ \\ \tt{Vec}\left(\tX_{1::}\right) &\cdots & \tt{Vec}\left(\tY_{m::} \right) \\ \\\bigg| & \cdots & \bigg|,\\ \end{bmatrix}$$
where $\mat{H_1} = circ(\vh_i) \in \R^{n \times n}$ is a circulant matrix and $\vh_i \in \R^n$ denotes $i^{th}$ row of $\mat{H}$. Observe that using Definition \ref{def:t-product}, this system can be written as the following t-linear system
$$ \tH \ast \widetilde{\tX} = \ \widetilde{\tY}$$,
where the $i$-th frontal face of the blurring tensor $\tH \in \R^{n \times n \times m}$ is given by $\mat{H}_i$ and $\widetilde{X}$ and $\widetilde{Y}$ are obtained by suitably refolding $\mat{X}$ and $\mat{Y}$ respectively to dimensions $ n \times p \times m $.

\begin{figure}
    \includegraphics[width=0.65\textwidth]{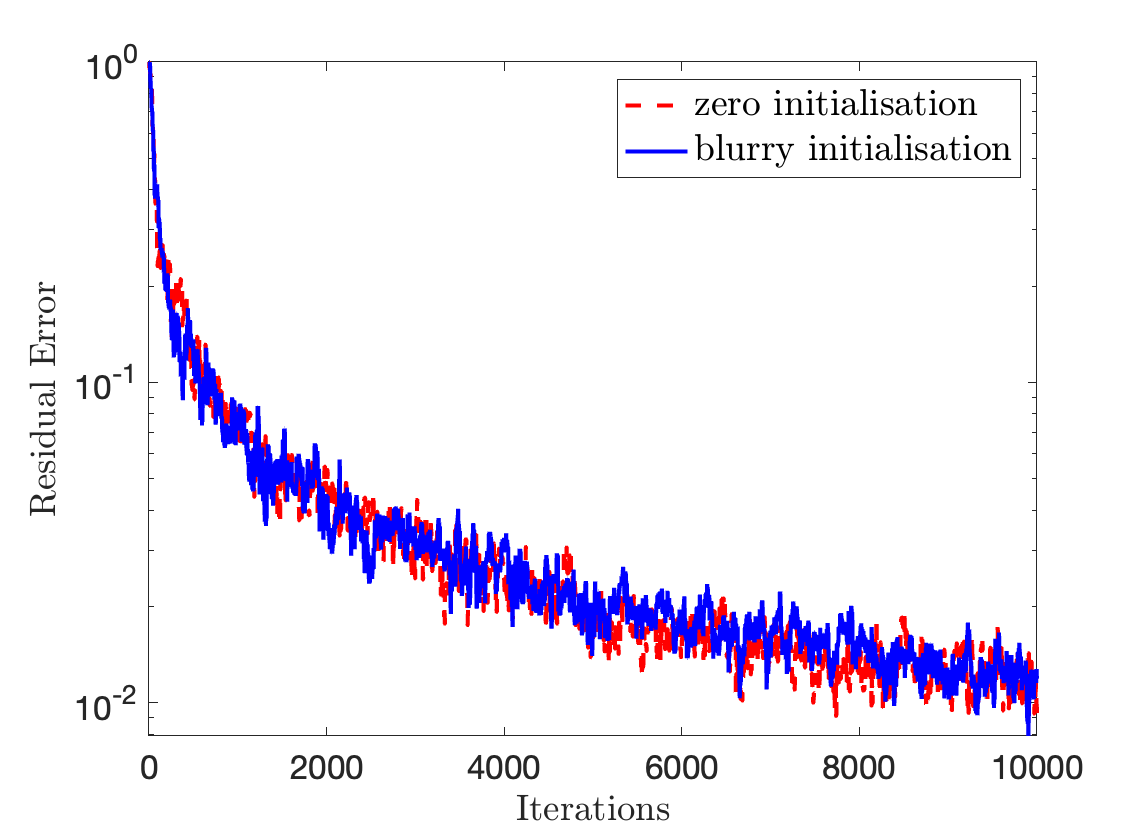}%
    \includegraphics[width=0.31\textwidth]{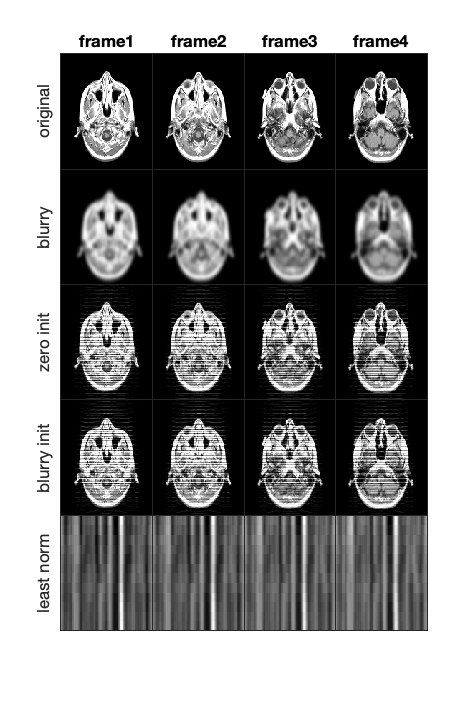}
    \caption{Deblurring  of twice sequentially blurred video data using the FacTRK.  (Left) Residual error of FacTRK iterates for two different initializations. (Right) Frames from original video (top row), twice blurred frames (second row), frames recovered using FacTRK (third and fourth row), and the least norm solution (bottom row).}\label{fig:deblurring}
\end{figure}

In the experiment corresponding to Figure~\ref{fig:deblurring}, we consider the recovery of 12 frames of twice blurred MRI images of size $128 \times 128$. The images were first circularly blurred by using a $5\times5$ Gaussian filter followed by a $5\times5$ averaging filter. This corresponds to solving a t-product system, $\tU\tV\tX = \tY$, where $\tX\in\R^{128\times 12 \times 128}$ represents the original images, $\tY\in\R^{128\times12\times128}$ the blurred images, and $\tU$ and $\tV \in \R^{128 \times 128 \times 128}$ correspond to the Gaussian blurring and average blurring operators, respectively. Using FacTRK, we are able to recover the frames well within a residual error of $10^{-2}$. Further, these recovered images match the original video data qualitatively.

\section{Conclusion}

We have developed two algorithms to solve tensor systems in which the measurement tensor is given as a t-product of two tensors. Our first method, named FacTBRK, obtains an optimal solution for \eqref{eq: full system} when both the outer system and inner systems are consistent and the second method, named FacTBREK, tackles the same problem as before, except that the outer system \eqref{eq:outer_system} can now be inconsistent. The crux of our algorithms are the interlacing Kaczmarz updates, extended from matrix to tensor structure. Our theoretical analysis and numerical simulations demonstrate that our algorithms converge exponentially in expectation to the least-norm least-squares solution of overdetermined or underdetermined linear systems. In particular, we show the strength of our proposed methods by applying them to an image deblurring problem and by illustrating that divergence occurs in cases that are outside the assumptions of our theorem. Moreover, we place our approach in a broader framework by establishing connections between our novel methods and previous randomized Kaczmarz methods, particularly in the non-factorized case and for the matricized version of the linear system.

\section{Acknowledgements}
The initial research for this effort was conducted at the Research Collaboration Workshop for Women in Data Science and Mathematics (WiSDM), held in August 2023 at the Institute for Pure and Applied Mathematics (IPAM). IPAM, AWM, and DIMACS funded the workshop (NSF grant CCF1144502).

This material is based upon work supported by the National Science Foundation under Grant No. DMS-1928930, while several of the authors were in residence at the Mathematical Sciences Research Institute in Berkeley, California, during the summer of 2024.

Several of the authors also appreciate support provided to them at a SQuaRE at the American Institute of Mathematics. The authors thank AIM for providing a supportive and mathematically rich environment.

This material is based upon work supported by the National Science Foundation under Grant No. DMS-1929284 while the author was in residence at the Institute for Computational and Experimental Research in Mathematics in Providence, RI, during the ``Randomized Algorithms for Tensor Problems with Factorized Operations or Data" Collaborate@ICERM.

JH was partially supported by NSF DMS \#2211318. DN was partially supported by NSF DMS \#2011140. KYD was partially supported by NSF \#2232344.
\bibliographystyle{plain}
\bibliography{main}

\appendix
\section{A: Effect of different systems and different block sizes. (Continued)}\label{sec:appendix}

\begin{figure}
    \includegraphics[width=0.495\textwidth]{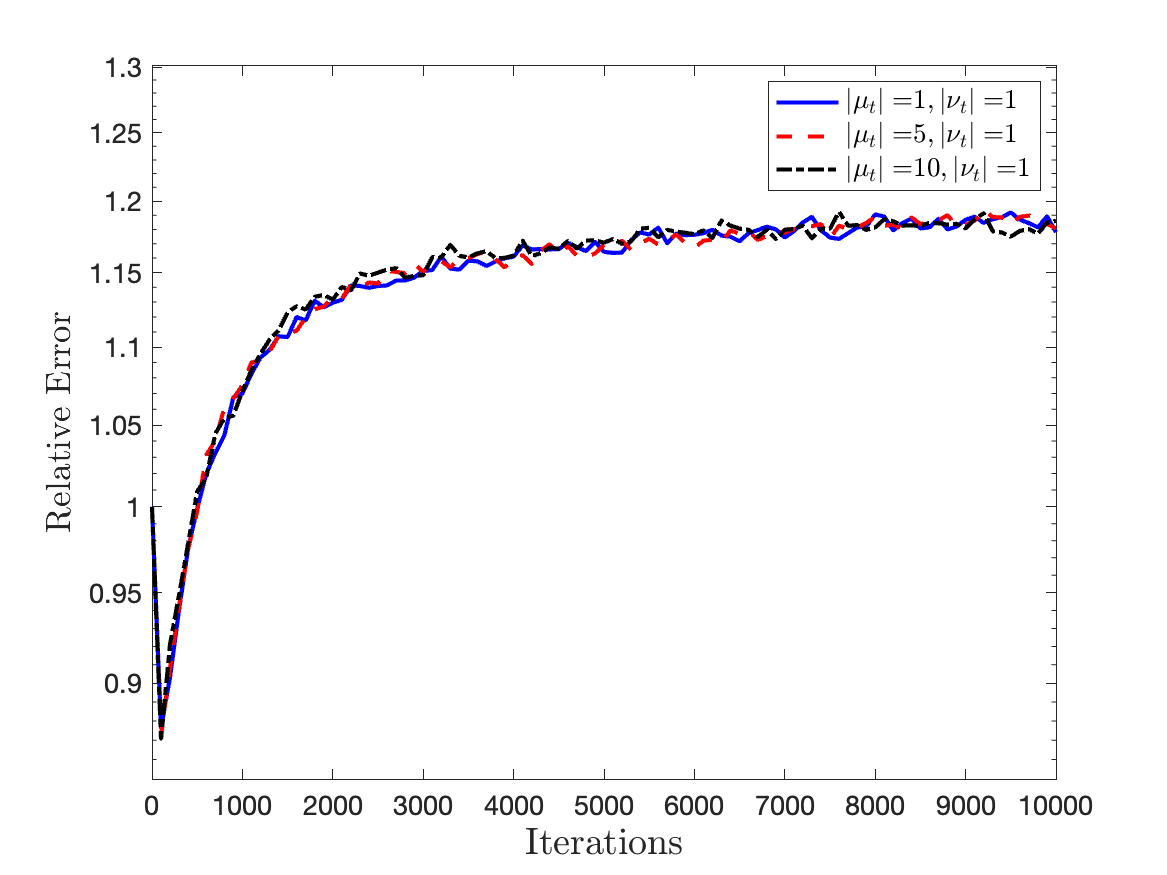}\hfill%
    \includegraphics[width=0.495\textwidth]{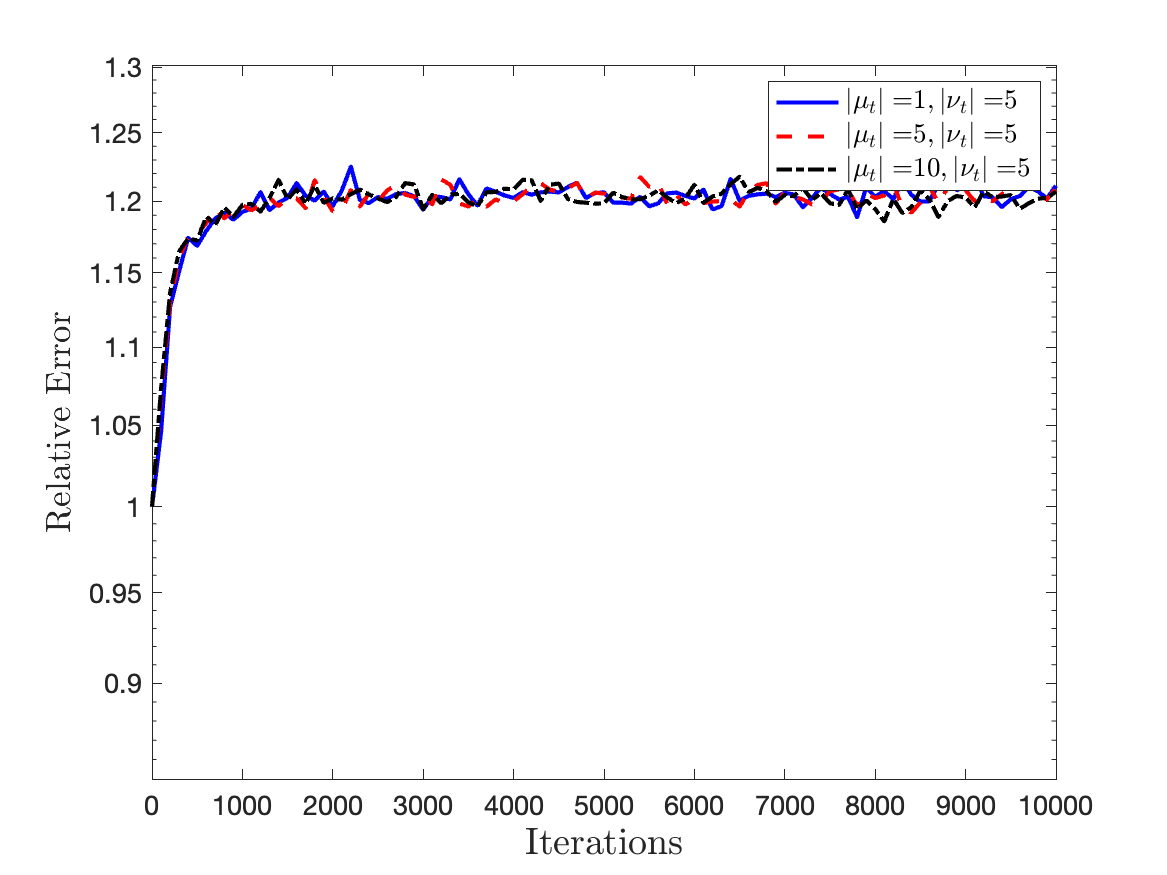}\hfill%
    \includegraphics[width=0.495\textwidth]{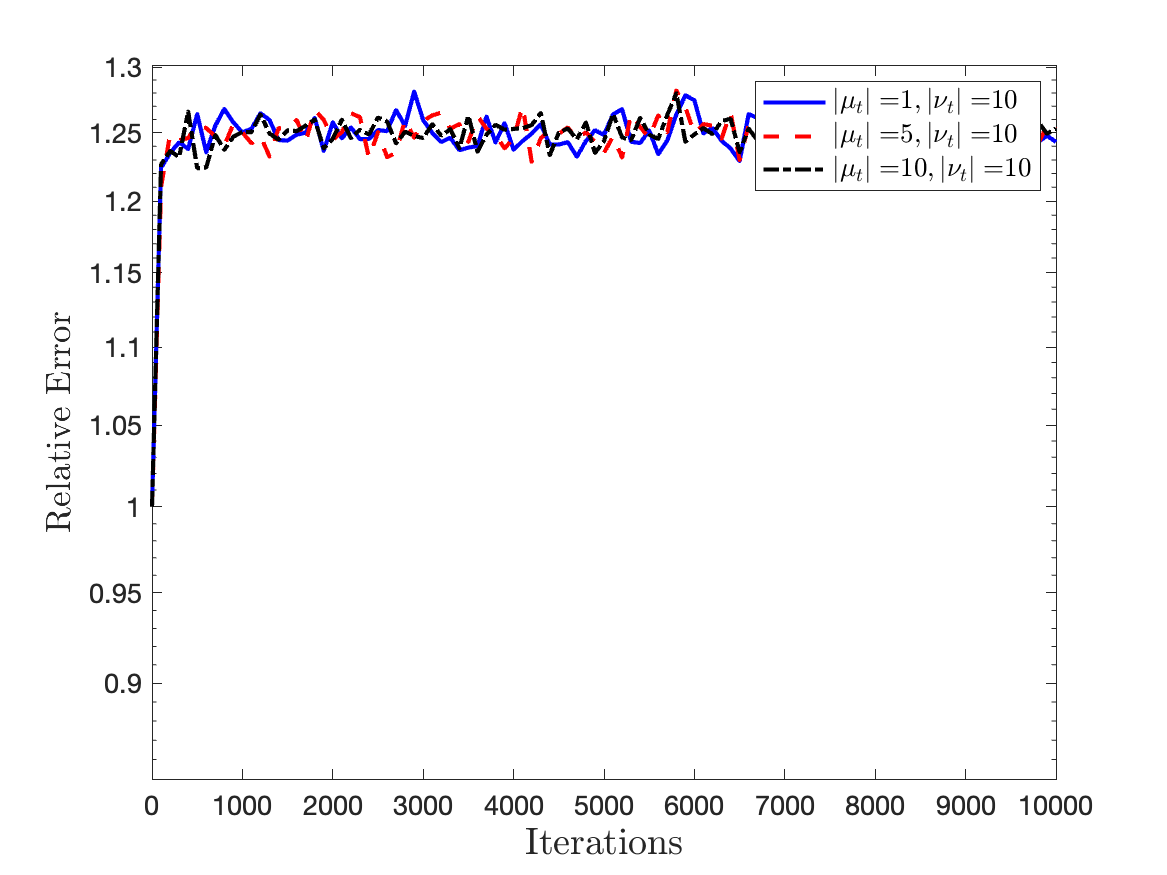}%

    \caption{Relative error $\|\tX^{(t)} - \tX^\ddagger\|_F/\|\tX^\ddagger\|_F$ vs iteration $t$ of FacTBRK on consistent linear system when $\tA$ is under-determined, $\tU$ is under-determined and $\tV$ is over-determined. We consider outer block sizes $|\mu_t| \in \{1, 5, 10\}$ and inner system block size (Upper Left) $|\nu_t| = 1$, (Upper Right) $|\nu_t| = 5$, (Lower) $|\nu_t| = 10$. }\label{fig:facTBRK_differentsystems_case1_2}
\end{figure}

\begin{figure}[h]
    \includegraphics[width=0.495\textwidth]{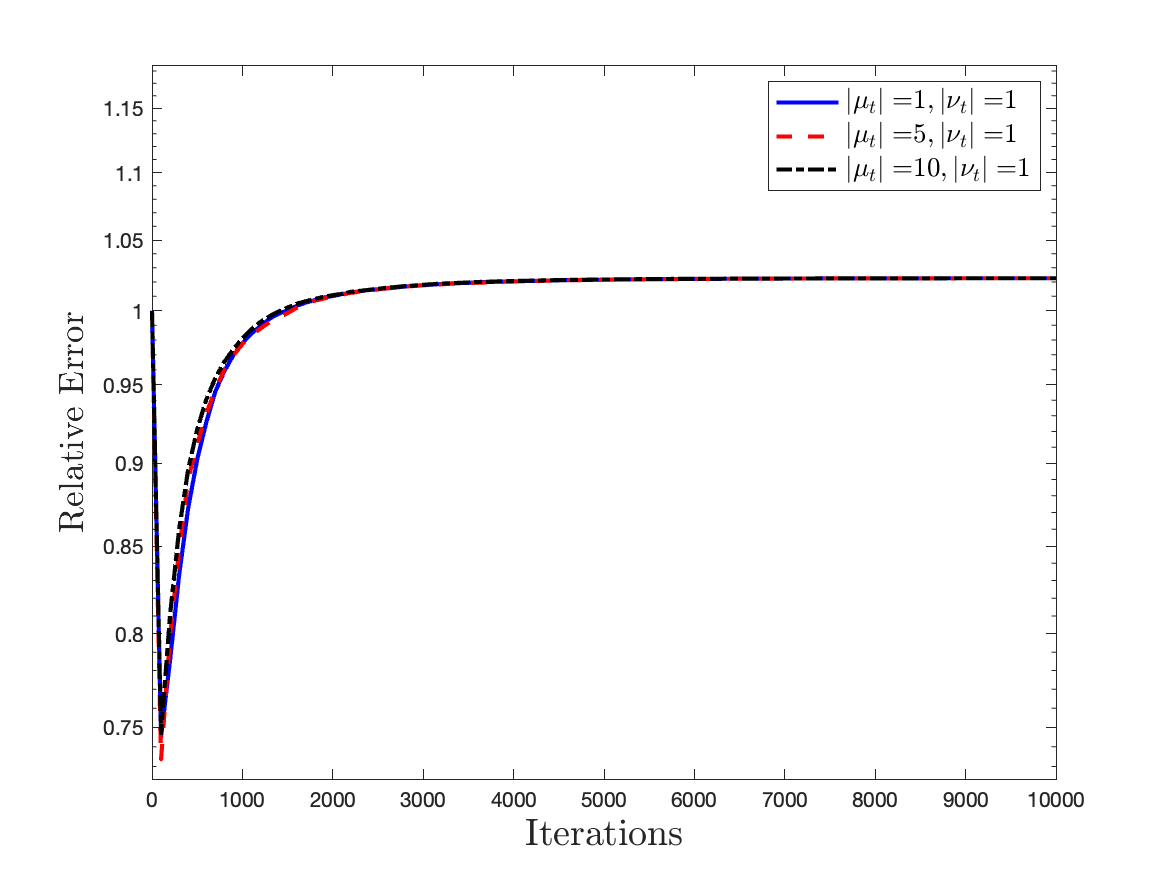}\hfill%
    \includegraphics[width=0.495\textwidth]{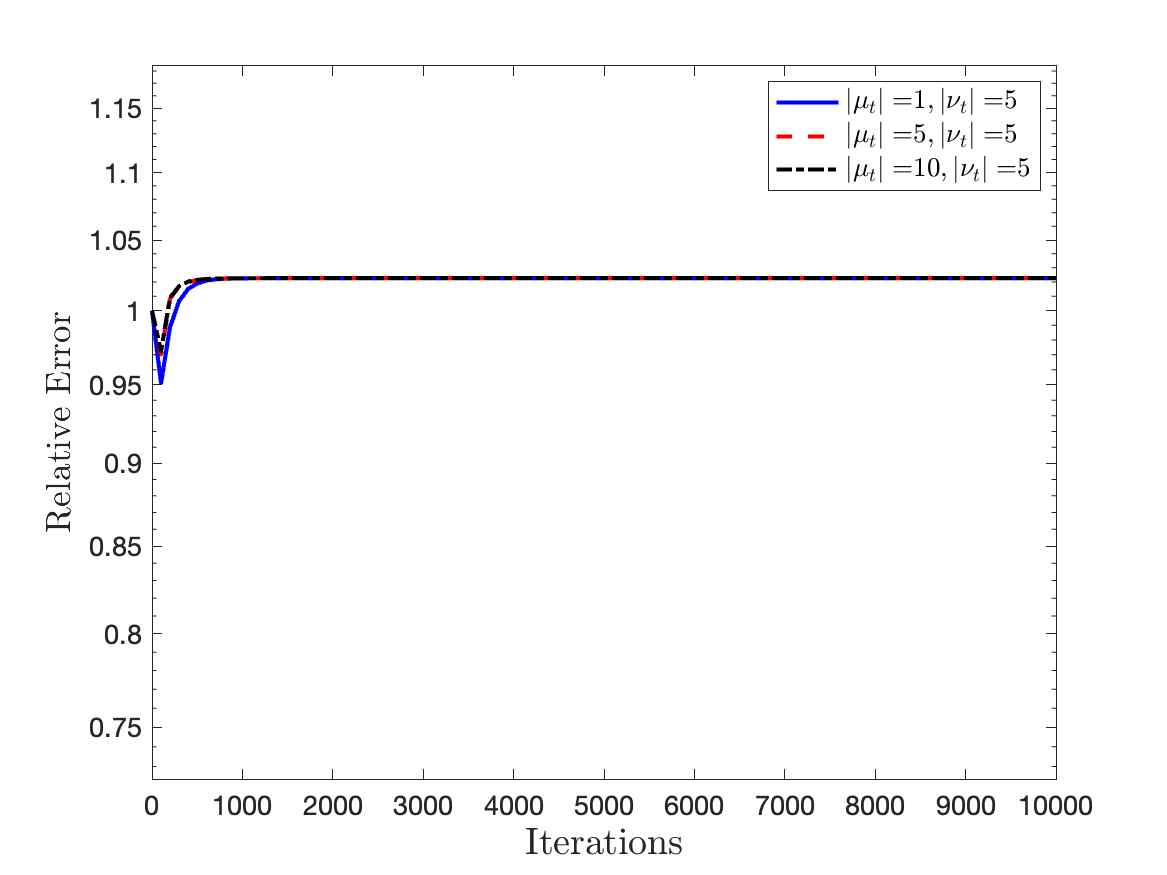}\hfill%
    \includegraphics[width=0.495\textwidth]{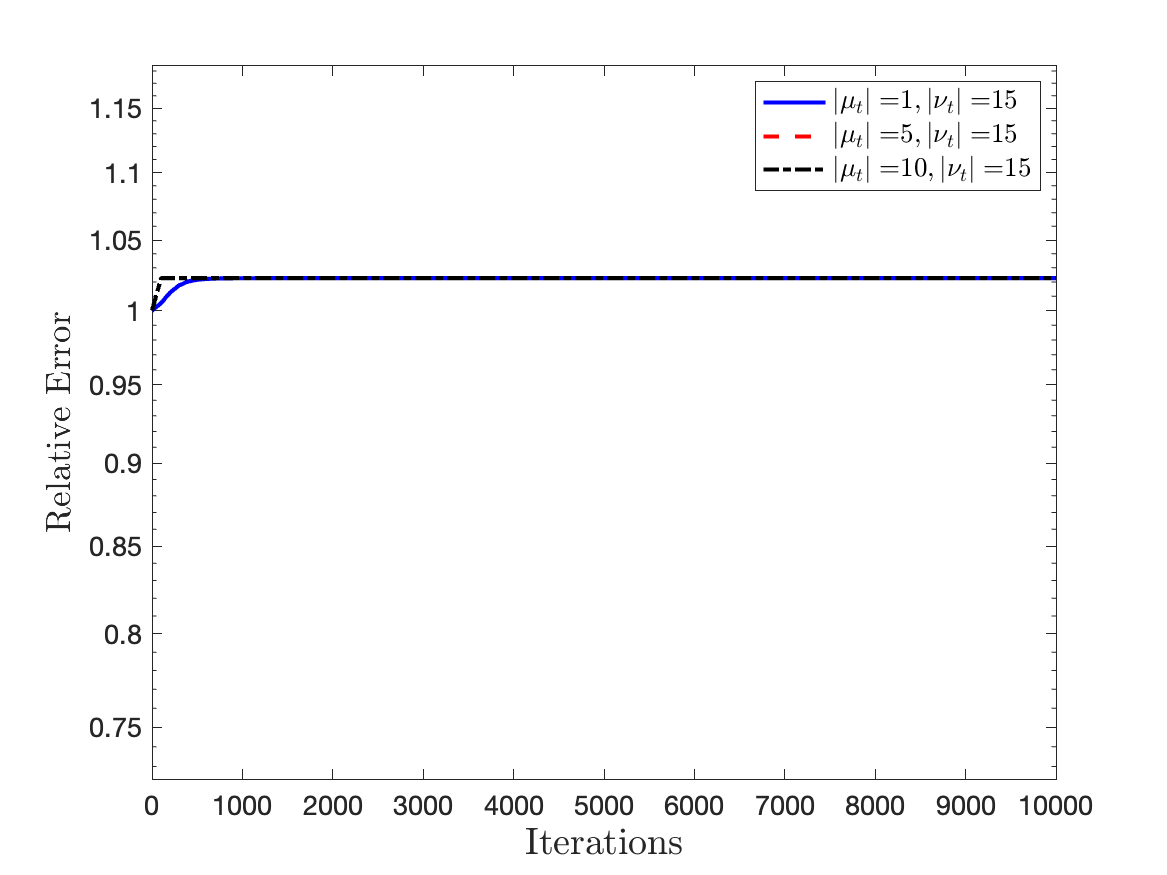}
    \caption{Relative error $\|\tX^{(t)} - \tX^\ddagger\|_F/\|\tX^\ddagger\|_F$ vs iteration $t$ of FacTBRK on consistent linear system when $\tA$ is under-determined and both $\tU$ and $\tV$ are under-determined. We consider outer block sizes $|\mu_t| \in \{1, 5, 10\}$ and inner system block size (Upper Left) $|\nu_t| = 1$, (Upper Right) $|\nu_t| = 5$, and (Lower) $|\nu_t| = 15$. }\label{fig:facTBRK_differentsystems_case1_3}
\end{figure}

\begin{figure}
    \includegraphics[width=0.495\textwidth]{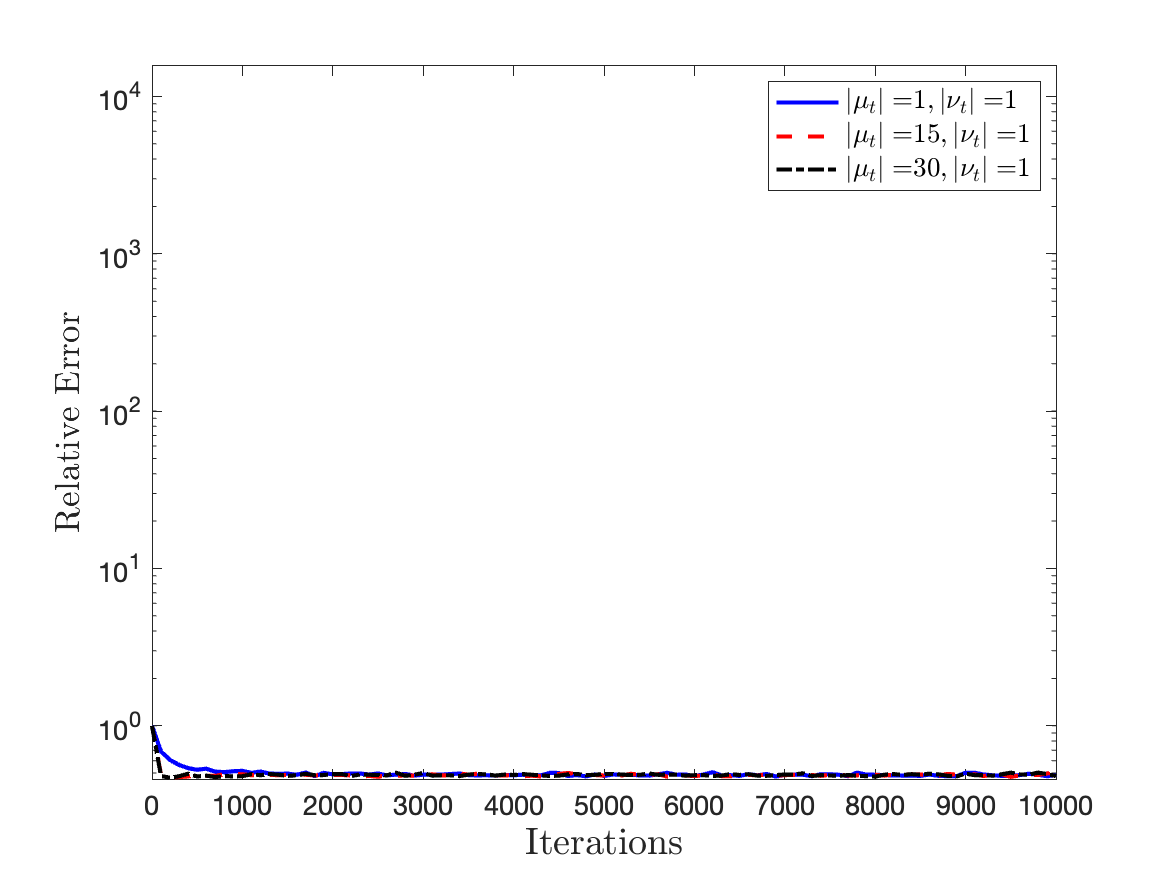}\hfill%
    \includegraphics[width=0.495\textwidth]{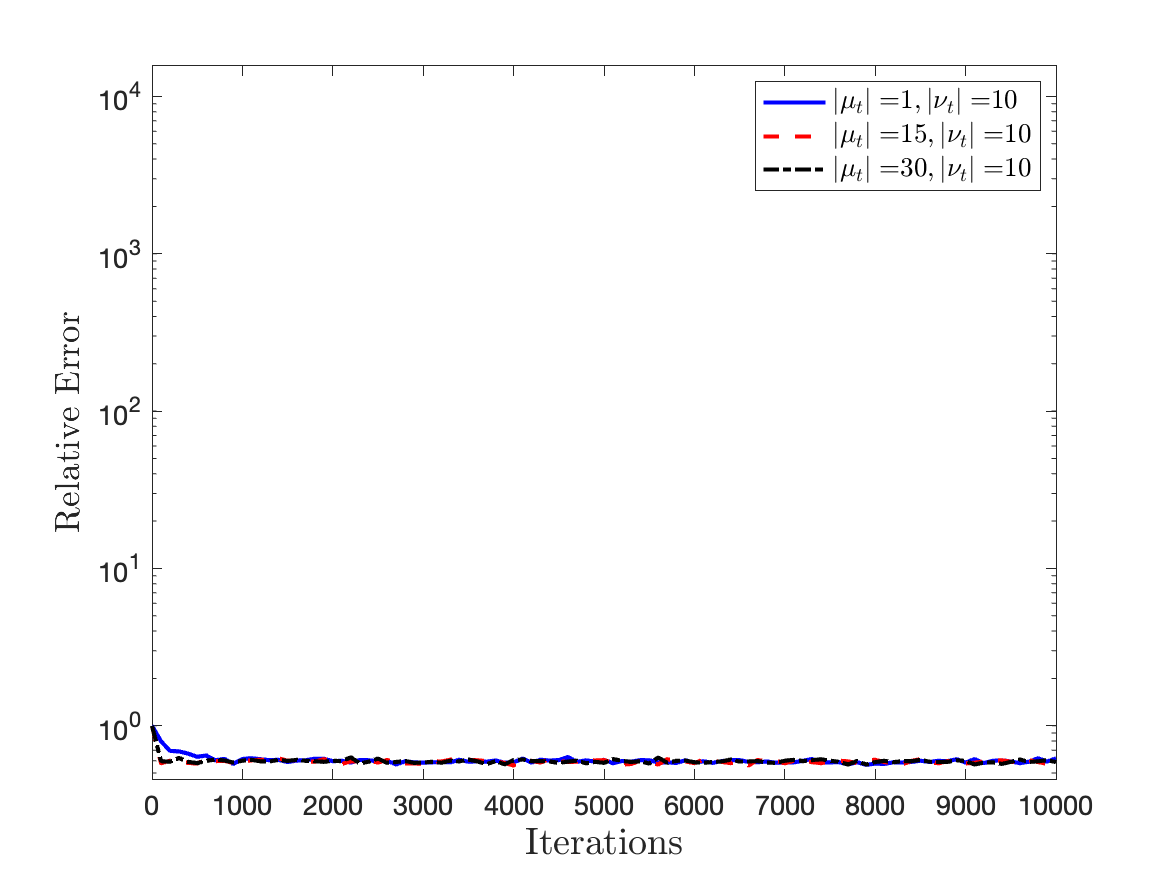}

    \includegraphics[width=0.495\textwidth]{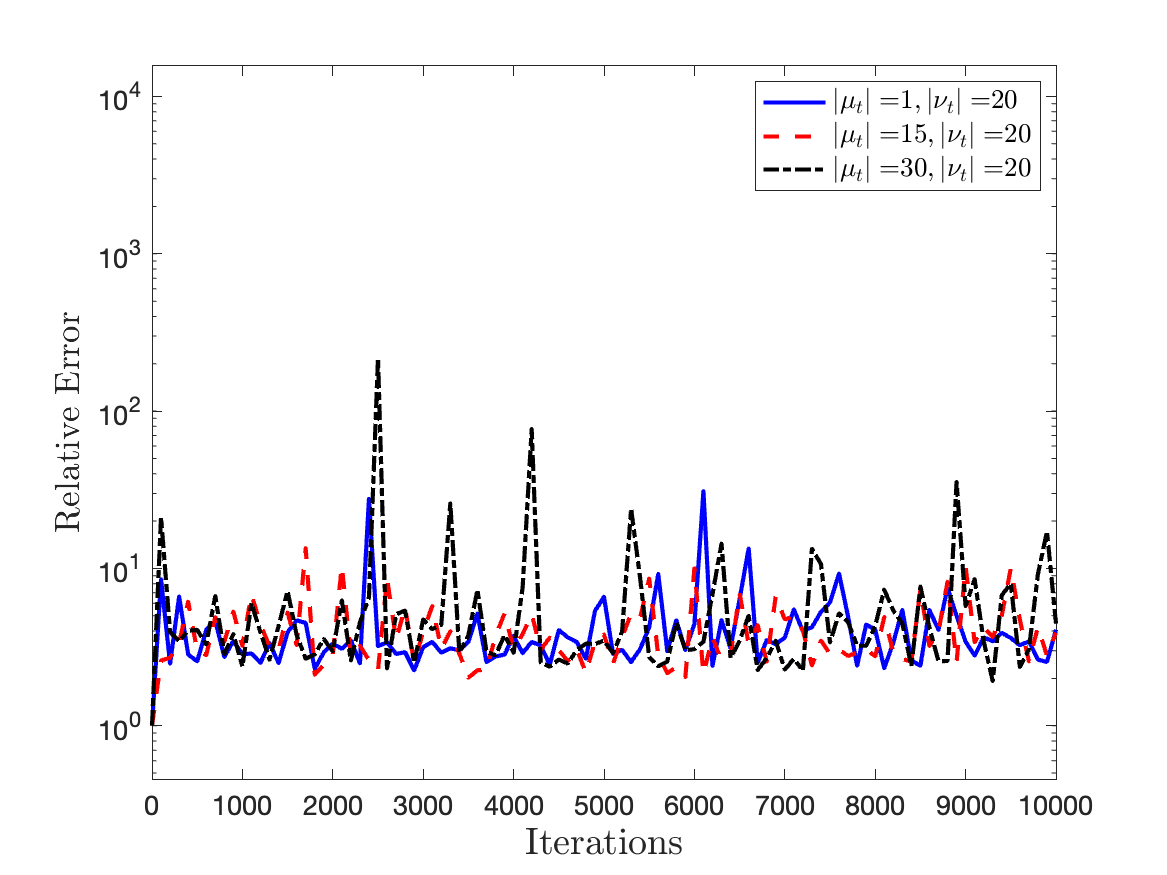}\hfill%

    \caption{Relative error $\|\tX^{(t)} - \tX^\ddagger\|_F/\|\tX^\ddagger\|_F$ vs iteration $t$ of FacTBRK on consistent linear system when $\tA$ is over-determined, $\tU$ is under-determined and $\tV$ is over-determined. We consider outer block sizes $|\mu_t| \in \{1, 10, 20\}$ and inner system block size (Upper Left) $|\nu_t| = 1$, (Upper Right) $|\nu_t| = 10$, (Lower) $|\nu_t| = 20$. }\label{fig:facTBRK_differentsystems_case2_2}
\end{figure}

\begin{figure}
    \includegraphics[width=0.495\textwidth]{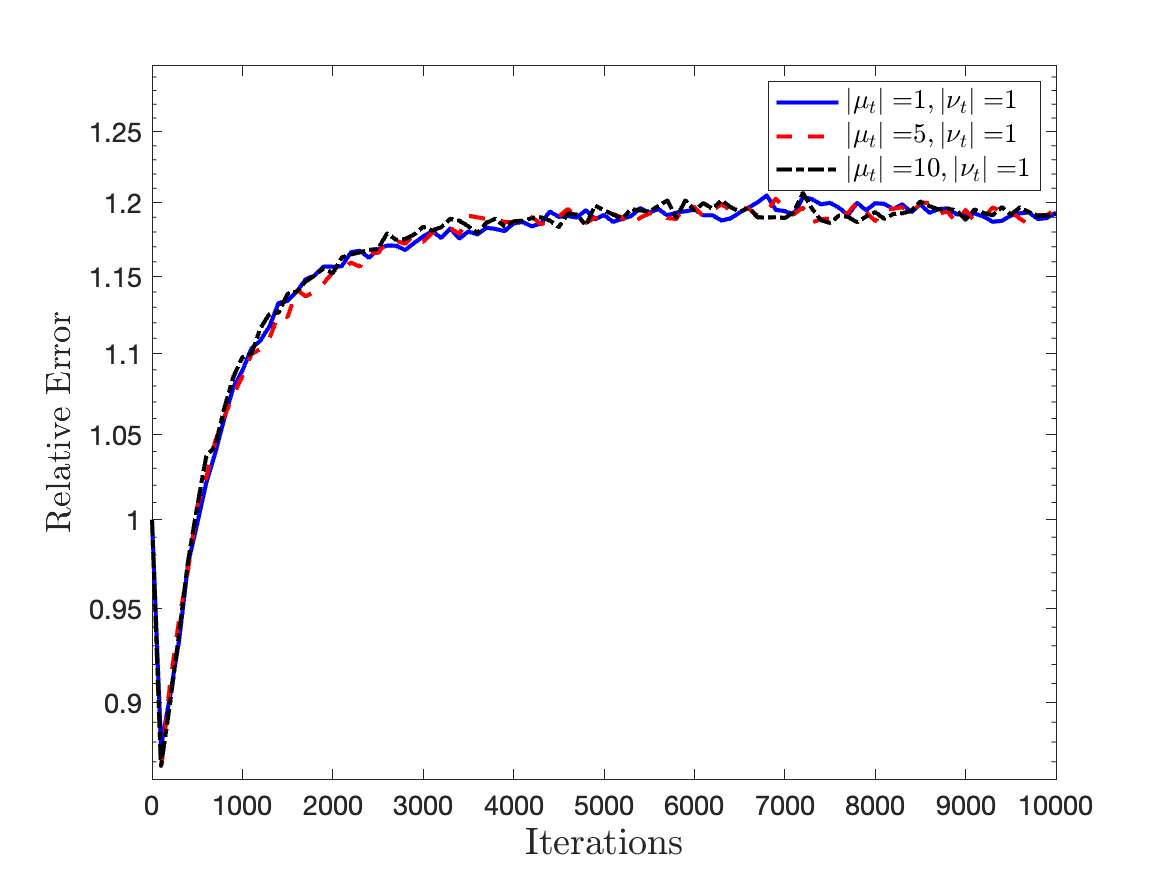}\hfill%
    \includegraphics[width=0.495\textwidth]{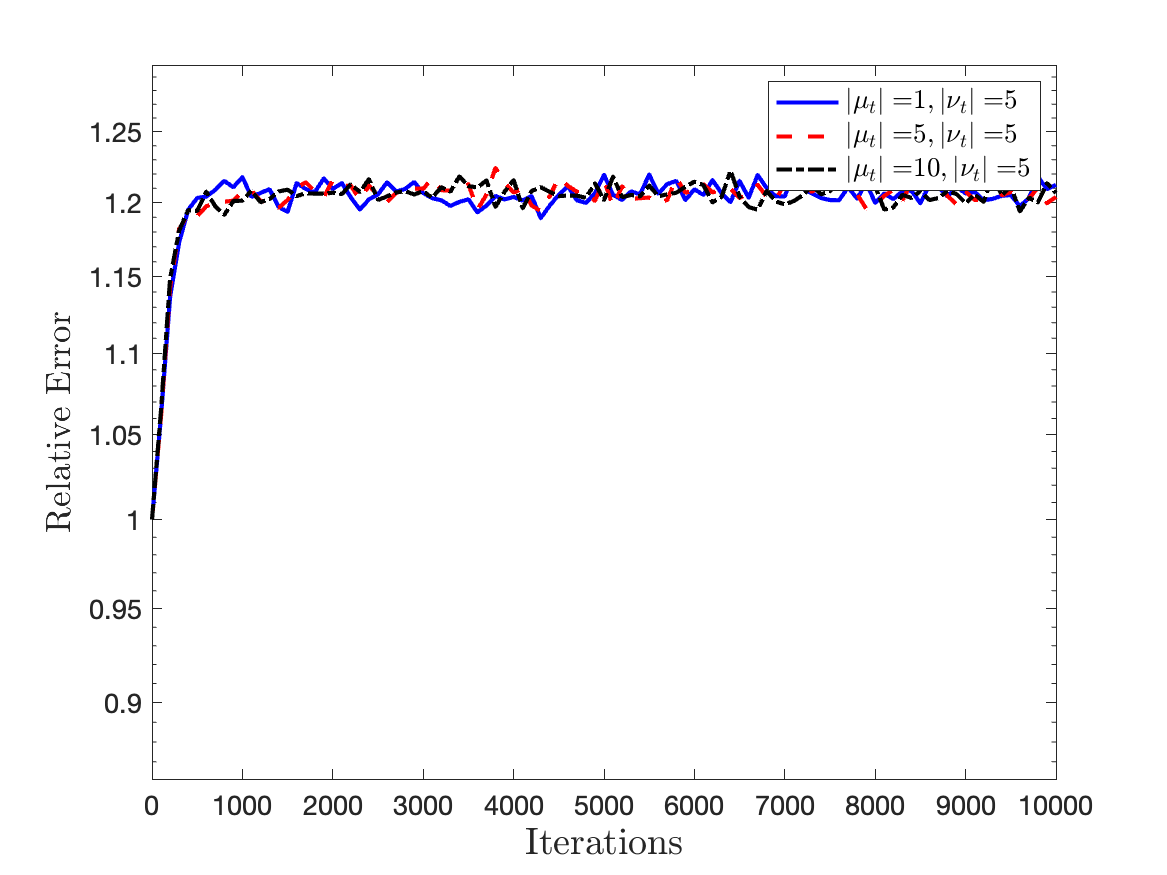}

    \includegraphics[width=0.495\textwidth]{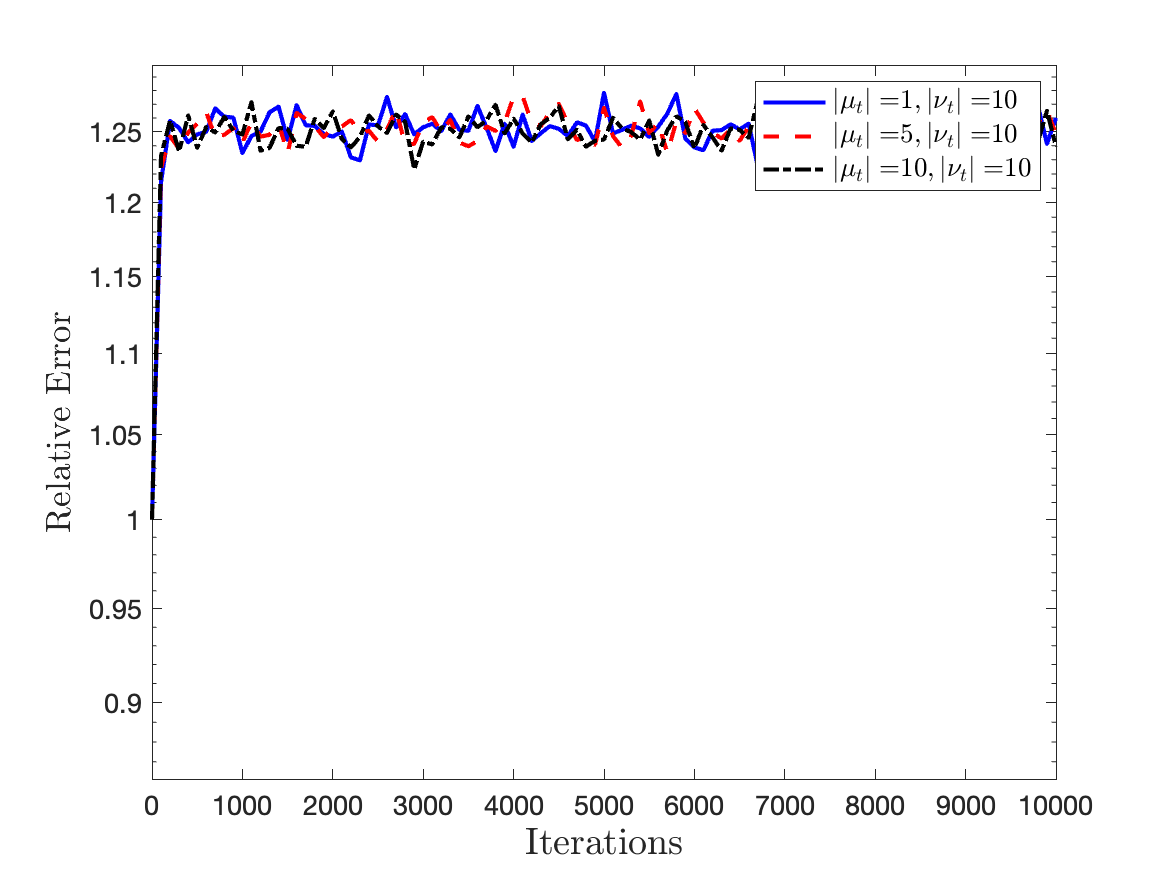}\hfill%

    \caption{Relative error $\|\tX^{(t)} - \tX^\ddagger\|_F/\|\tX^\ddagger\|_F$ vs iteration $t$ of FacTBREK on consistent linear system when $\tA$ is under-determined, $\tU$ is under-determined and $\tV$ is over-determined. We consider outer block sizes $|\mu_t| \in \{1, 5, 10\}$ and inner system block size (Upper Left) $|\nu_t| = 1$, (Upper Right) $|\nu_t| = 5$, (Lower) $|\nu_t| = 10$. }\label{fig:facTBREK_differentsystems_case1_2}
\end{figure}

\begin{figure}
    \includegraphics[width=0.495\textwidth]{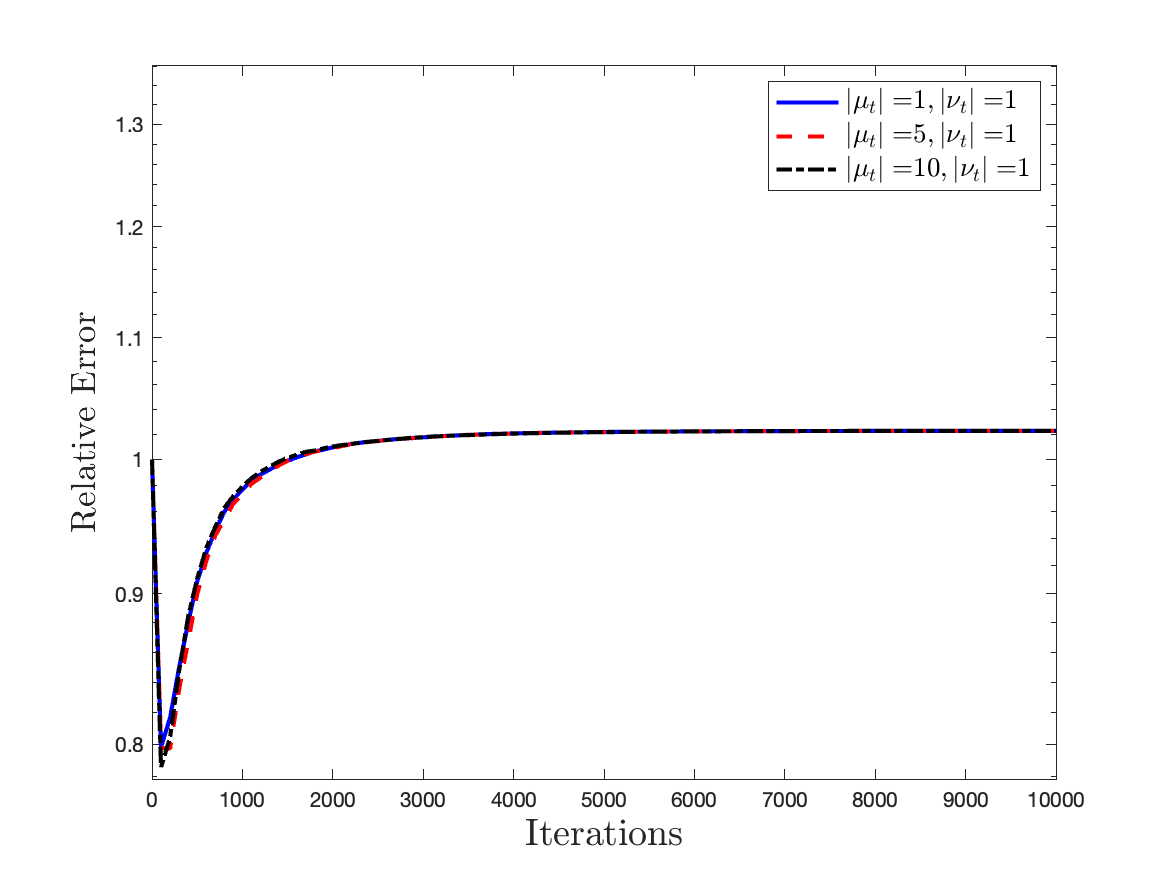}\hfill%
    \includegraphics[width=0.495\textwidth]{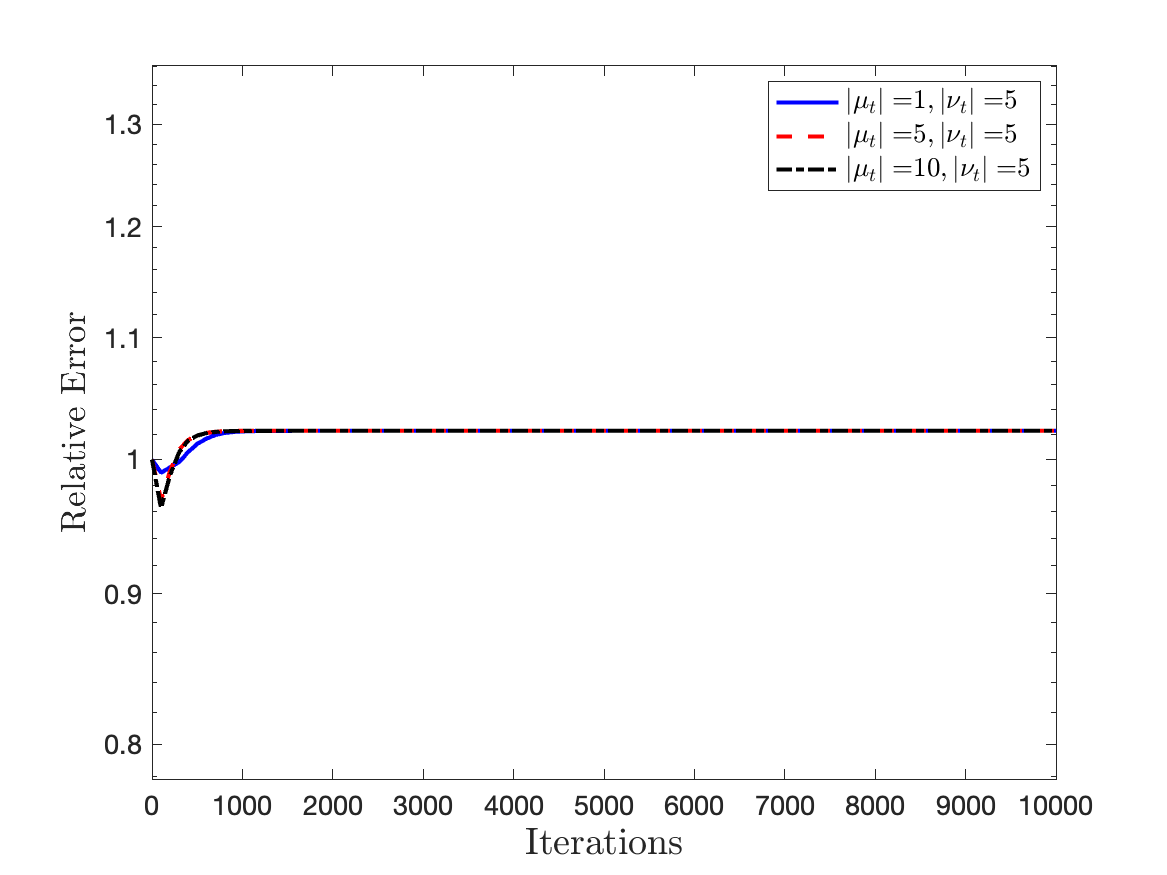}\hfill%
    \includegraphics[width=0.495\textwidth]{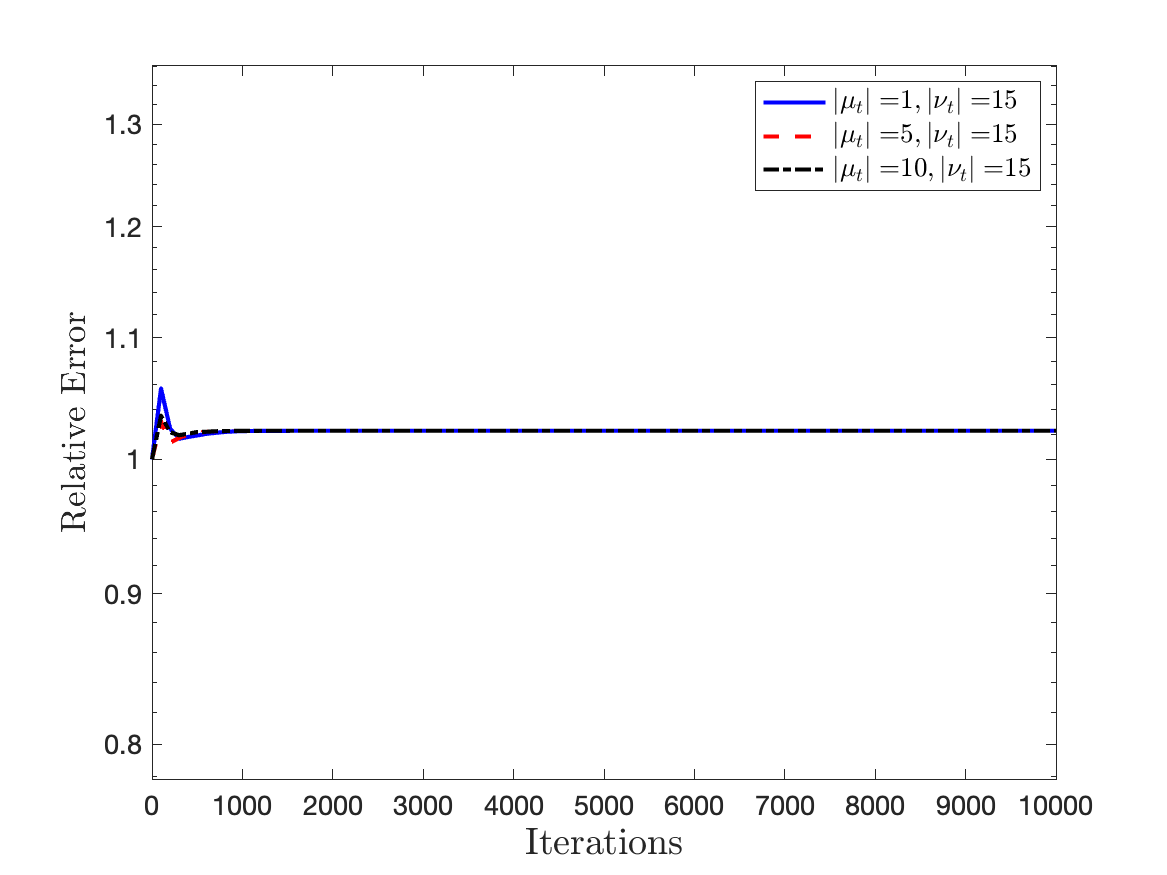}
    \caption{Relative error $\|\tX^{(t)} - \tX^\ddagger\|_F/\|\tX^\ddagger\|_F$ vs iteration $t$ of FacTBREK on consistent linear system when $\tA$ is under-determined, and both $\tU$ and $\tV$ are under-determined. We consider outer block sizes $|\mu_t| \in \{1, 5, 10\}$ and inner system block size (Upper Left) $|\nu_t| = 1$, (Upper Right) $|\nu_t| = 5$, and (Lower) $|\nu_t| = 15$. }\label{fig:facTBREK_differentsystems_case1_3}
\end{figure}

\begin{figure}
    \includegraphics[width=0.495\textwidth]{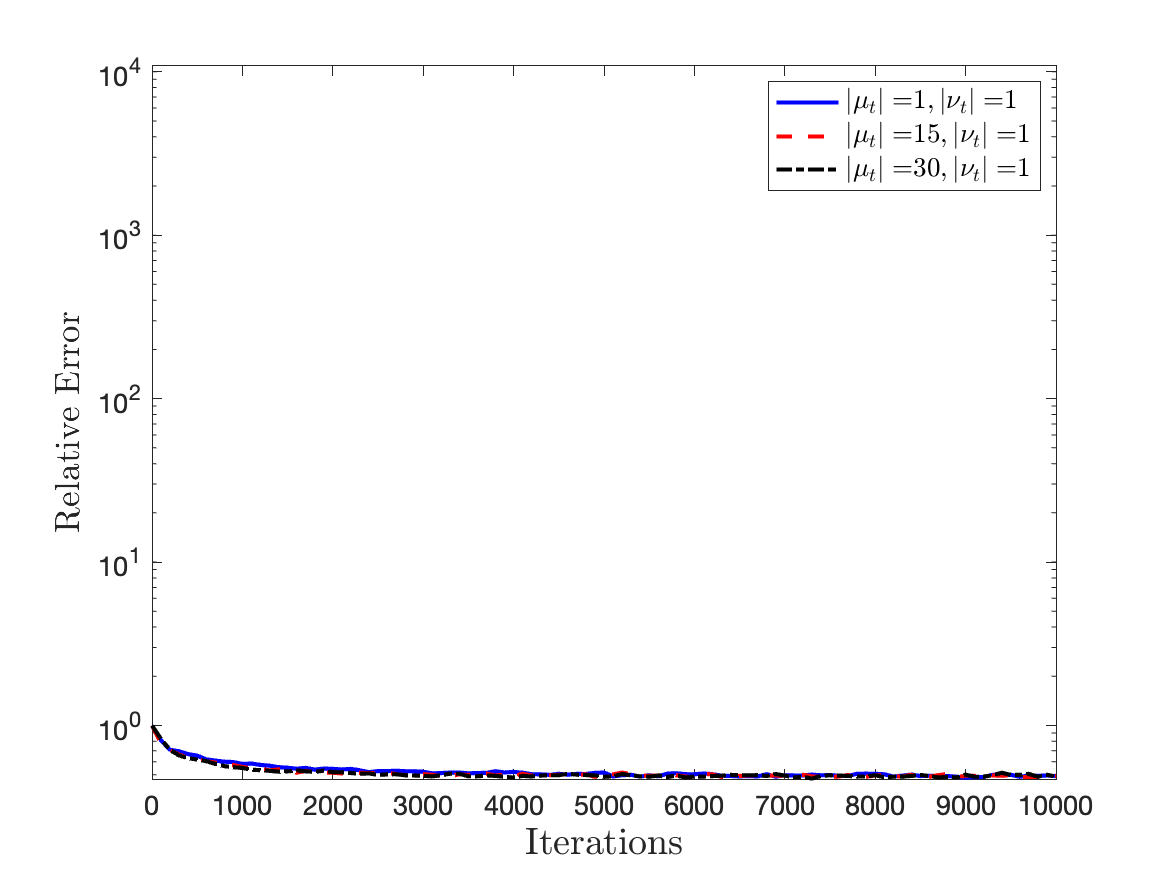}\hfill%
    \includegraphics[width=0.495\textwidth]{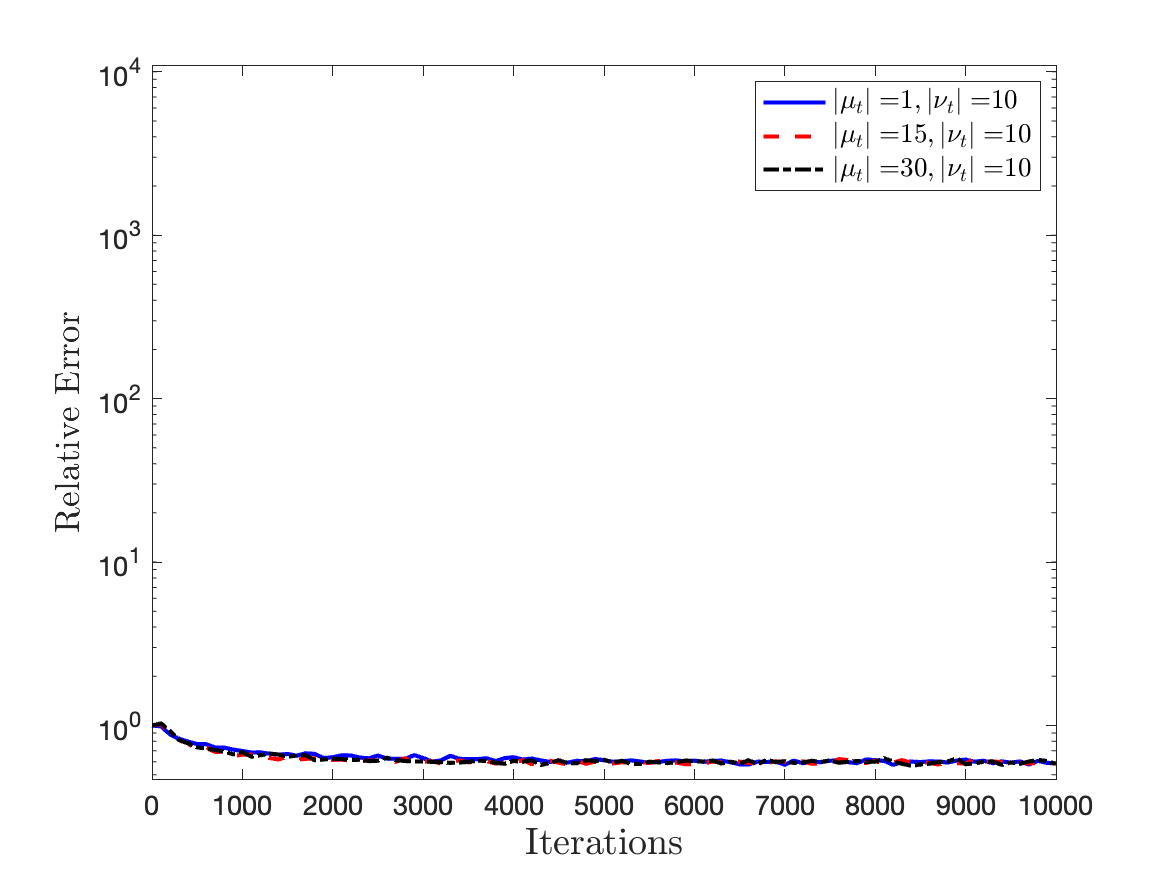}

    \includegraphics[width=0.495\textwidth]{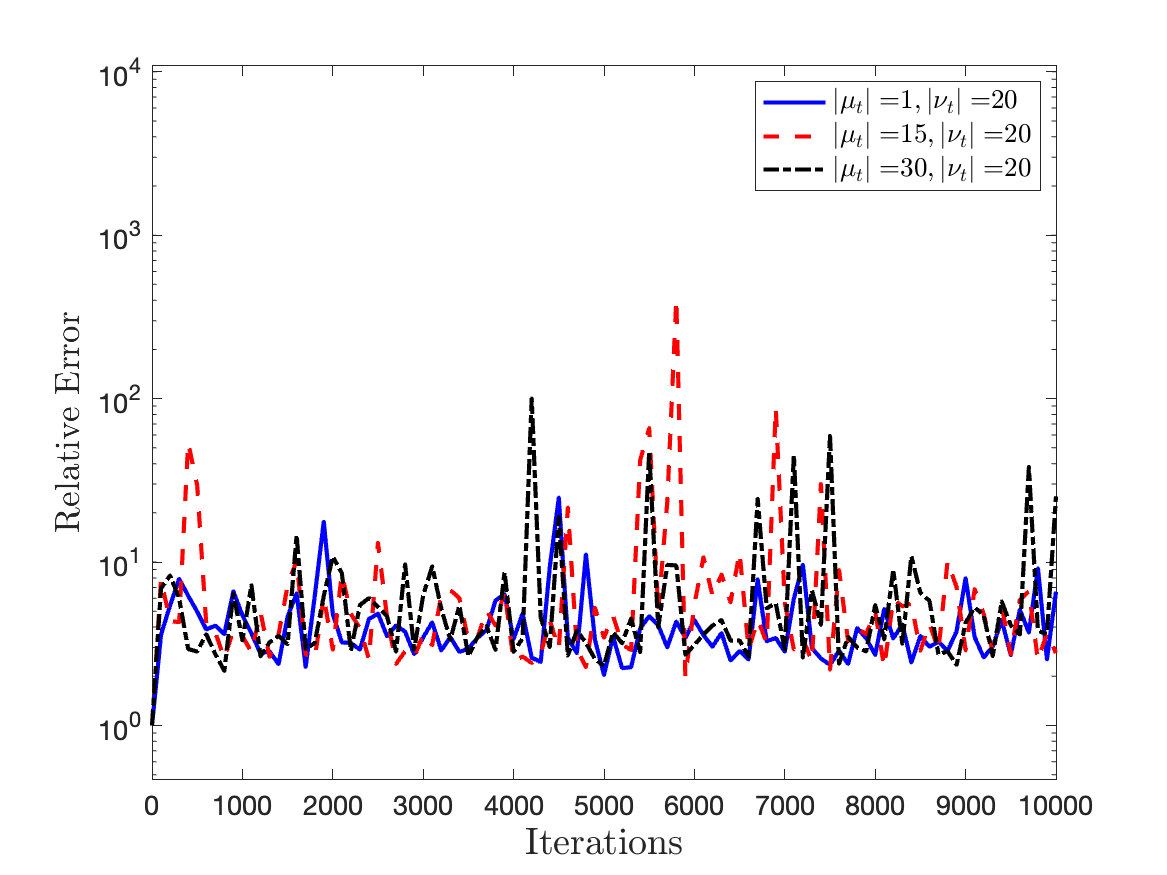}\hfill%

    \caption{Relative error $\|\tX^{(t)} - \tX^\ddagger\|_F/\|\tX^\ddagger\|_F$ vs iteration $t$ of FacTBREK on consistent linear system when $\tA$ is over-determined, $\tU$ is under-determined and $\tV$ is over-determined. We consider outer block sizes $|\mu_t| \in \{1, 10, 20\}$ and inner system block size (Upper Left) $|\nu_t| = 1$, (Upper Right) $|\nu_t| = 10$, (Lower) $|\nu_t| = 20$. }\label{fig:facTBREK_differentsystems_case2_2}
\end{figure}

\end{document}